\numberwithin{equation}{section}
\theoremstyle{plain}
\newtheorem{theorem}{Theorem}[section]
\newtheorem{corollary}[theorem]{Corollary}
\newtheorem{lemma}[theorem]{Lemma}
\newtheorem{proposition}[theorem]{Proposition}
\newtheorem{definition}[theorem]{Definition}
\newtheorem{remark}[theorem]{Remark}
\begin{document}


\title[Static manifolds with boundary]{Static manifolds with boundary: Their geometry and some uniqueness theorems}

\author{Vladimir Medvedev}


\address{Faculty of Mathematics, National Research University Higher School of Economics, 6 Usacheva Street, Moscow, 119048, Russian Federation}

\email{vomedvedev@hse.ru}



\begin{abstract}
Static manifolds with boundary were recently introduced to mathematics. This kind of manifold appears naturally in the prescribed scalar curvature problem on manifolds with boundary when the mean curvature of the boundary is also prescribed. They are also interesting from the point of view of general relativity. For example, the (time-slice of the) photon sphere on the Riemannian Schwarzschild manifold splits it into static manifolds with boundary. In this paper, we prove a number of theorems that relate the topology and geometry of a given static manifold with boundary to some properties of the zero-level set of its potential (such as connectedness and closedness). Also, we characterize the round ball in the Euclidean 3-space with standard potential as the only scalar-flat static manifold with mean-convex boundary whose zero-level set of the potential has Morse index one. This result follows from a general isoperimetric inequality for 3-dimensional static manifolds with boundary, whose zero-level set of the potential has Morse index one. Finally, we prove some uniqueness theorems for the domains bounded by the photon sphere on the Riemannian Schwarzschild manifold.
 \end{abstract}

\maketitle


\newcommand\cont{\operatorname{cont}}
\newcommand\diff{\operatorname{diff}}

\newcommand{\dvol}{\text{dA}}
\newcommand{\Ric}{\operatorname{Ric}}
\newcommand{\Hess}{\operatorname{Hess}}
\newcommand{\GL}{\operatorname{GL}}
\newcommand{\myO}{\operatorname{O}}
\newcommand{\myP}{\operatorname{P}}
\newcommand{\eye}{\operatorname{Id}}
\newcommand{\myF}{\operatorname{F}}
\newcommand{\Vol}{\operatorname{Vol}}
\newcommand{\odd}{\operatorname{odd}}
\newcommand{\even}{\operatorname{even}}
\newcommand{\ol}{\overline}
\newcommand{\mye}{\operatorname{E}}
\newcommand{\myo}{\operatorname{o}}
\newcommand{\myt}{\operatorname{t}}
\newcommand{\irr}{\operatorname{Irr}}
\newcommand{\mydiv}{\operatorname{div}}
\newcommand{\curl}{\operatorname{curl}}
\newcommand{\re}{\operatorname{Re}}
\newcommand{\im}{\operatorname{Im}}
\newcommand{\can}{\operatorname{can}}
\newcommand{\scal}{\operatorname{scal}}
\newcommand{\tr}{\operatorname{trace}}
\newcommand{\sgn}{\operatorname{sgn}}
\newcommand{\SL}{\operatorname{SL}}
\newcommand{\myspan}{\operatorname{span}}
\newcommand{\mydet}{\operatorname{det}}
\newcommand{\SO}{\operatorname{SO}}
\newcommand{\SU}{\operatorname{SU}}
\newcommand{\specl}{\operatorname{spec_{\mathcal{L}}}}
\newcommand{\fix}{\operatorname{Fix}}
\newcommand{\id}{\operatorname{id}}
\newcommand{\grad}{\operatorname{grad}}
\newcommand{\singsup}{\operatorname{singsupp}}
\newcommand{\wave}{\operatorname{wave}}
\newcommand{\ind}{\operatorname{ind}}
\newcommand{\mynull}{\operatorname{null}}
\newcommand{\inj}{\operatorname{inj}}
\newcommand{\arcsinh}{\operatorname{arcsinh}}
\newcommand{\Spec}{\operatorname{Spec}}
\newcommand{\Ind}{\operatorname{Ind}}
\newcommand{\Nul}{\operatorname{Nul}}
\newcommand{\inrad}{\operatorname{inrad}}
\newcommand{\mult}{\operatorname{mult}}
\newcommand{\Length}{\operatorname{Length}}
\newcommand{\Area}{\operatorname{Area}}
\newcommand{\Ker}{\operatorname{Ker}}
\newcommand{\floor}[1]{\left \lfloor #1  \right \rfloor}

\newcommand\restr[2]{{
  \left.\kern-\nulldelimiterspace 
  #1 
  \vphantom{\big|} 
  \right|_{#2} 
  }}


\section{Introduction}

The main problem in General Relativity is to solve the Einstein equations. However, this problem is transcendentally complicated. One of the simplest cases where it becomes possible to solve Einstein's equations is the so-called \textit{static space-times}. Recall that a Lorentzian manifold is called a static space-time if it can be locally represented as the Lorentzian warped product of a complete Riemannian manifold $(M,g)$ with the real line, i.e., there exists a warping function $V\in C^\infty(M)$ such that the Lorentzian metric takes the form $\mathfrak{g}=-V^2dt^2+g$ on $M\times\mathbb R$. The function $V$ is called a \textit{static potential or lapse function}. We will use the term "static potential" and omit the word "static". A straight-forward computation shows that a static space-time satisfies the \textit{Einstein vacuum equation with cosmological constant} if, and only if, the warping function on the underlying Riemannian manifold satisfies the following equation
\begin{equation}\label{eq:static}
\Hess_{g}V-(\Delta_{g} V){g}-V\Ric_{g}=0,
\end{equation}
where $\Hess_g$ denotes the Hessian, $\Delta_g=\tr_g\Hess_g$ is the Laplace-Beltrami operator of the metric $g$, and $\Ric_g$ is its Ricci curvature. 

\begin{definition}
A Riemannian manifold $(M^n,g)$, admitting a non-trivial smooth solution $V\colon M\to\mathbb{R}$ to~\eqref{eq:static}, is called a \textit{static manifold}. 
\end{definition}

General relativity is not the only place where static manifolds appear. Another example, when equation~\eqref{eq:static} appears in geometry, comes from the observation that the formal $L^2$-adjoint operator to the linearized scalar curvature operator takes the form $DR^*|_g(V)=\Hess_{g}V-(\Delta_{g} V){g}-V\Ric_{g}$. Such an operator appears naturally in the \textit{prescribed scalar curvature problem} (see, for example, \cite{fischer1975deformations,bourguignon1975stratification,corvino2000scalar}). Cruz and Vitorio in~\cite{cruz2019prescribing} considered a similar problem on \textit{manifolds with boundary}. This question led them to considering the formal $L^2$-adjoint operator to the linearized  operator, which assigns to a metric $g$ on a given manifold with boundary $M$ the pair $(R_g,H_g)$, where $R_g$ is the scalar curvature on $(M,g)$ and $H_g$ is the mean curvature of $\partial M$. A function $V$, that belongs to the kernel of this operator has to satisfy the following boundary value problem
\begin{equation}\label{static}
\left\{
   \begin{array}{rcl}
\Hess_{g}V-(\Delta_{g} V){g}-V\Ric_{g} &= &0\quad\mbox{in}\quad M,\\
\dfrac{\partial V}{\partial \nu}g-VB_{g}& = &0\quad\mbox{on}\quad\partial M.
\end{array}
   \right.
\end{equation}
Here $\nu$ denotes the outward unit normal vector field to $\partial M$ and $B_g$ is the second fundamental form of $\partial M$ with respect to $\nu$. This motivates the following definition (see~\cite{almaraz2022rigidity})

\begin{definition} A \emph{static manifold with boundary} is a triple $(M,g,V)$, where $(M,g)$ is a Riemannian manifold with boundary and $V$ is a non-trivial smooth solution to~\eqref{static}.  
 \end{definition}
 
 If $(M,g,V)$ is a static manifold with boundary, then we still call $V$ a (static) potential. 
 
 \begin{remark}
 For further developments in the prescribed scalar curvature problem on manifolds with boundary, when the mean curvature is also prescribed, see the following papers~\cite{ho2020deformation,cruz2023critical,sheng2024localized,sheng2024static}.
 \end{remark}
 
 The reader can easily verify that system~\eqref{static} is equivalent to the following equations
 \begin{equation}\label{static1}
\Hess_gV=V\left(\Ric_g - \dfrac{R_g}{n-1}g\right)\ \ \mbox{and} \ \ \Delta_gV=-\dfrac{R_g}{n-1}V \ \ \mbox{on $M$}
\end{equation}
and 
\begin{equation}\label{static2}
V\left(B_g-\dfrac{H_g}{n-1}g\right)=0 \ \ \mbox{and} \ \ \dfrac{\partial V}{\partial \nu}=\dfrac{H_g}{n-1}V \ \ \mbox{on $\partial M$}.
\end{equation}

Static manifolds with boundary have many interesting properties. For example, the scalar curvature $R_g$ of such a manifold is a constant (see~\cite[Proposition 1]{cruz2023static}). In the standard way, we may assume that $R_g=\epsilon n(n-1)$, where $\epsilon\in\{1,0,-1\}$. Sometimes, the scalar curvature is written in the form $R_g=(n-1)\Lambda$ and $\Lambda$ is called the \textit{cosmological constant}. We will also use this notation.

It is interesting to classify static manifolds with boundary. In the paper~\cite{cruz2023static} the authors obtained some uniqueness theorems for compact scalar-flat static manifolds with mean-convex boundary under the assumption that the zero-level set of the potential $\Sigma$ is connected. Our first result tries to answer the question under which geometrical/topological assumptions can we guarantee that the zero-level set of the potential is connected ( $Int~M$ denotes the interior of $M$). 

\begin{theorem}\label{cor:closed1}
If a compact static manifold with boundary $(M^n,g,V)$ with non-positive cosmological constant is a topological cylinder and $V^{-1}(0)=\Sigma\subset Int~M$, then $\Sigma$ is connected.  
\end{theorem}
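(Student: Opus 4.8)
The plan is to estimate, in two opposite directions, the number of connected components of $M\setminus\Sigma$. A maximum-principle argument --- the only place where the sign of the cosmological constant matters --- will show that $M\setminus\Sigma$ has at most two components, while a purely topological argument using the cylinder hypothesis will show that it has at least one more component than $\Sigma$ itself has. Comparing the two bounds forces $\Sigma$ to be connected.

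First I would recall the standard fact that $dV\neq0$ at every point of $\Sigma=V^{-1}(0)$: if $V$ and $dV$ both vanished at some $p$, restricting \eqref{static1} to geodesics issuing from $p$ would give a homogeneous linear second-order ODE for $V$ with vanishing initial data, forcing $V\equiv0$ near $p$ and hence on all of $M$. So $\Sigma$ is a smooth closed embedded hypersurface in the interior of $M$, two-sided, with $V$ changing sign across it; if $\Sigma=\emptyset$ there is nothing to prove, so assume $\Sigma\neq\emptyset$, and then $M\setminus\Sigma=M^{+}\sqcup M^{-}$ with $M^{\pm}=\{\pm V>0\}$ open and nonempty. The crucial step is the assertion that \emph{every connected component $C$ of $M\setminus\Sigma$ meets $\partial M$}. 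To prove it, suppose $\overline{C}\cap\partial M=\emptyset$; then $C$ is an open subset of the interior of $M$ whose frontier is contained in $\Sigma$, and $V$ has a constant sign on the connected set $C$, say $V>0$ (the case $V<0$ is symmetric, replacing maxima by minima). On $\overline{C}$ we have $V\ge0$, so by \eqref{static1} and $R_g=\epsilon n(n-1)$ with $\epsilon\le0$ we get $\Delta_gV=-\epsilon n\,V\ge0$ there, i.e.\ $V$ is subharmonic on $\overline{C}$. Since $V$ is positive on $C$ but vanishes on the nonempty frontier of $C$, it is nonconstant, so the strong maximum principle places the maximum of $V$ over the compact set $\overline{C}$ on the frontier; that maximum would then equal $0$, contradicting $V>0$ on $C$.

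Now the topology enters. Write $M\cong N\times[0,1]$. Then $\partial M$ has exactly two connected components, each of them connected and disjoint from $\Sigma$, hence contained in a single component of $M\setminus\Sigma$; combined with the previous step this forces $M\setminus\Sigma$ to have at most two components. For the opposite inequality, note that the inclusion of either boundary component into $M$ is a homotopy equivalence, so $H_{n-1}(\partial M;\mathbb{Z}/2)\to H_{n-1}(M;\mathbb{Z}/2)$ is surjective, and therefore $H_{n-1}(M;\mathbb{Z}/2)\to H_{n-1}(M,\partial M;\mathbb{Z}/2)$ is the zero map. Consequently each component $\Sigma_i$ of $\Sigma$, a two-sided closed hypersurface in the interior of $M$, satisfies $[\Sigma_i]=0$ in $H_{n-1}(M,\partial M;\mathbb{Z}/2)$, so by Poincar\'e--Lefschetz duality every loop in $M$ meets $\Sigma_i$ with even mod-$2$ multiplicity; in other words each $\Sigma_i$ separates $M$. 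Cutting $M$ successively along the pairwise disjoint separating hypersurfaces $\Sigma_1,\dots,\Sigma_k$ (with $k$ the number of components of $\Sigma$), every cut strictly increases the number of components, since the hypersurface being removed lies in one of the current pieces and, as it separates $M$ and its collar straddles it, splits that piece into at least two. Hence $M\setminus\Sigma$ has at least $k+1$ components, and comparing with the bound above gives $k+1\le 2$, so $k=1$ and $\Sigma$ is connected.

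The step with real content is the middle one --- that no component of $M\setminus\Sigma$ can be disjoint from $\partial M$ --- and I would flag it as the main obstacle: it is precisely where $\Lambda\le0$ is used, through the sign of $\Delta_gV$ on $\{V>0\}$ and $\{V<0\}$, and without this hypothesis the maximum principle, and with it the whole argument, fails. Everything else is routine topology; the cylinder assumption is used only through the facts that $\partial M$ has two components and that every closed hypersurface in the interior of a topological cylinder is null-homologous mod $2$ relative to the boundary, and in a complete write-up the only slightly delicate point is the elementary claim that cutting along $k$ pairwise disjoint separating closed hypersurfaces produces at least $k+1$ pieces.
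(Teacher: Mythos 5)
Your proposal is correct and takes essentially the same route as the paper: the maximum-principle step (every component of $M\setminus\Sigma$ must contain a boundary component when the cosmological constant is non-positive, via the subharmonicity of $\pm V$) is exactly the paper's Lemma~\ref{lemma:main}, and the pigeonhole comparison with the two boundary components of the cylinder is the paper's corollary. The only difference is that you spell out in detail the topological counting (each $\Sigma_i$ separates $M$, so $M\setminus\Sigma$ has at least $k+1$ components), which the paper leaves implicit as a ``simple corollary,'' and your treatment of that step is sound.
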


This is the case of the compact domains on the Schwarzschild and Schwarzschild-anti de Sitter manifolds described in \textit{Examples 6} and \textit{7} in Section~\ref{sec:examples}. Notice that in the case where the cosmological constant is positive, Theorem~\ref{cor:closed1} fails in general (see \textit{Example 10} in Section~\ref{sec:examples}). 

However, it turns out that it is not always possible to satisfy the condition $\Sigma\subset Int~M$. That is, the following theorem holds.

\begin{theorem}\label{cor:closed2}
If a compact static manifold with boundary $(M^n,g,V)$ with non-positive cosmological constant has one boundary component, then $\Sigma=V^{-1}(0)$ intersects $\partial M$ or $V$ does not vanish in $M$. 
\end{theorem}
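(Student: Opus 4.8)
The plan is to argue by contradiction. Suppose that $V$ vanishes somewhere in $M$, so that $\Sigma=V^{-1}(0)\neq\emptyset$, and that $\Sigma$ does \emph{not} meet $\partial M$, i.e. $\Sigma\subset \operatorname{Int}M$. The first step is to record that, since $\partial M$ is connected (this is precisely where the ``one boundary component'' hypothesis is used) and $V$ is continuous and nowhere zero on $\partial M$, the function $V$ has a constant sign along $\partial M$; replacing $V$ by $-V$, which is again a static potential, we may assume $V>0$ on $\partial M$, and then $\min_{\partial M}V>0$ by compactness. If $\partial M$ had several components, $V$ could be positive on some of them and negative on others and $\Sigma$ could legitimately live in the interior, as happens for the Schwarzschild-type domains of Section~\ref{sec:examples}, so this step really uses the hypothesis.

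The second step uses only the trace equation in~\eqref{static1}, namely $\Delta_g V=-\frac{R_g}{n-1}V=-\Lambda V$ on $M$. Because the cosmological constant is non-positive, $\Lambda\le 0$, so $V$ solves $LV=0$, where $L:=\Delta_g+\Lambda$ is a linear elliptic operator whose zeroth-order coefficient $\Lambda$ is $\le 0$. The weak maximum principle for such operators, applied on the compact manifold $M$ to $-V$ (which satisfies $L(-V)=0\ge 0$), gives $\sup_M(-V)\le\sup_{\partial M}(-V)^+=0$; hence $V\ge 0$ on all of $M$.

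For the third step, observe that since $\Sigma\neq\emptyset$ and $\Sigma\subset\operatorname{Int}M$, the non-negative function $V$ attains its minimum value $0$ at an interior point. The strong maximum principle for $L$ — a non-negative function $u$ with $Lu\le 0$ on a connected domain that vanishes at an interior point must vanish identically — then forces $V\equiv 0$ on $\operatorname{Int}M$, and hence on $M$ by continuity, contradicting the non-triviality of $V$. When $\Lambda=0$ this is just the statement that a non-negative harmonic function vanishing at an interior point is identically $0$; when $\Lambda<0$ it is the strong minimum principle for $\Delta_g+\Lambda$. This contradiction proves that $\Sigma$ intersects $\partial M$ or $V$ has no zeros in $M$.

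The argument is essentially two applications of the maximum principle together with the elementary sign observation on $\partial M$, so I do not expect a serious technical obstacle. The only genuinely delicate point is that the \emph{strong} (not merely the weak) maximum principle is needed in the last step: a priori $V$ could touch the value $0$ ``tangentially'' along the hypersurface $\Sigma$ — consistent with the fact that $\nabla V\neq 0$ on $\Sigma$ — and it is exactly the strong maximum principle, equivalently the Harnack inequality, that excludes this. One should also keep in mind the standing assumption that $M$ is connected, so that ``vanishes at an interior point'' propagates to ``vanishes identically'' over all of $M$.
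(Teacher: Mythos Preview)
Your proof is correct. The paper's route is slightly different in organization: it first establishes Lemma~\ref{lemma:main} (every connected component of $M\setminus\Sigma$ must contain a boundary component of $M$, proved by the weak maximum principle on that component) and then deduces Theorem~\ref{cor:closed2} by a counting argument --- if $\Sigma\subset\operatorname{Int}M$ were non-empty, then, being a regular level set, it would separate $M$ into at least two components, forcing $\partial M$ to have at least two components. You instead work globally on $M$: the weak maximum principle gives $V\ge 0$, and then the \emph{strong} maximum principle rules out an interior zero. Your argument is more direct for this particular statement and avoids the topological separation step; the paper's approach has the advantage that Lemma~\ref{lemma:main} is reusable and simultaneously yields Theorem~\ref{cor:closed1} and its corollary on the number of components. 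Both arguments hinge on the same PDE fact, namely that $\Delta_g V+\Lambda V=0$ with $\Lambda\le 0$ puts $V$ in the regime where the maximum principle applies.
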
 

This is the case of geodesic balls in Euclidean and hyperbolic spaces. The potentials are defined in \textit{Examples 2} and \textit{4} in Section~\ref{sec:examples}. 

\begin{remark}
An analog of Theorem~\ref{cor:closed1} for the case where $\Sigma$ intersects the boundary is also proved in the present paper (see Theorem~\ref{thm:bd} below).
\end{remark}

Our next result is the inverse to Theorem~\ref{cor:closed1} in the following sense: Here we assume that the zero-level set of the potential is connected, and we find geometrical assumptions under which the static manifold has two boundary components.

\begin{theorem}\label{thm:rigidity}
Let $(M^n,g,V)$ be a compact static manifold with boundary with $R_g=\epsilon n(n-1)$, which satisfies one of the following conditions

\begin{enumerate}[(i)]

\item $\epsilon=1$ and the mean curvature of any boundary component is non-negative;

\item $\epsilon=0$ and the mean curvature of any boundary component is positive.

\end{enumerate}

Suppose that $\Sigma=V^{-1}(0)\subset Int~M$ is connected. Then, respectively,  

\begin{enumerate}[(i)]

\item $\partial M$ has at most two boundary components;

\item $\partial M$ has exactly two boundary components.

\end{enumerate}

Moreover, if $V=const$ on $\partial M$, then any boundary component has positive scalar curvature. In particular, when $n=3$ the boundary components are round spheres. 
\end{theorem}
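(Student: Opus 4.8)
\emph{Proof plan.} The idea is to analyse $\Sigma=V^{-1}(0)$, reduce to the two pieces into which it separates $M$, count the boundary components lying in each piece, and then read off the geometry of $\partial M$ from the boundary equations~\eqref{static2} when $V|_{\partial M}$ is locally constant.

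First I would record the basic structure of $\Sigma$. By the standard unique continuation property of solutions of~\eqref{static}, $V$ and $dV$ cannot vanish at the same point, so $0$ is a regular value and $\Sigma$ is a smooth closed embedded hypersurface; restricting the Hessian equation in~\eqref{static1} to $\Sigma$ (where $V=0$) shows its second fundamental form vanishes, so $\Sigma$ is totally geodesic and $|\nabla V|$ is locally constant on it. Since $\Sigma\subset\operatorname{Int} M$, $V$ is nowhere zero on $\partial M$, hence each component of $\partial M$ lies entirely in $\{V>0\}$ or in $\{V<0\}$. Integrating $\Delta_gV=-\tfrac{R_g}{n-1}V$ over $M$ and using $\partial V/\partial\nu=\tfrac{H_g}{n-1}V$ gives $-R_g\int_M V=\int_{\partial M}H_gV$; if $V$ had a constant sign this would be contradictory (in case (i) the two sides would have opposite strict signs, and in case (ii) the left side vanishes while the right side has a strict sign). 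Therefore $\Sigma\neq\emptyset$, and since $V$ changes sign exactly across $\Sigma$ and is nonzero elsewhere, $\Sigma$ separates $M$ into two compact manifolds $\overline{M^+}=\{V\geq 0\}$ and $\overline{M^-}=\{V\leq 0\}$, whose boundaries consist of $\Sigma$ together with the boundary components of $M$ of the corresponding sign.

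The main point is to show that each of $\overline{M^{\pm}}$ contains \emph{at most one} component of $\partial M$. Granting this, (i) follows at once; for (ii) one notes in addition that a non-constant harmonic $V$ attains a positive maximum and a negative minimum on $\partial M$, so each of $\overline{M^{\pm}}$ contains \emph{at least} one component of $\partial M$, giving exactly two. To prove the claim, work on $\overline{M^+}$: there $V>0$ in the interior, $V=0$ on $\Sigma$, $\Delta_gV+\tfrac{R_g}{n-1}V=0$ with $\tfrac{R_g}{n-1}\geq 0$, and $\partial V/\partial\nu=\tfrac{H_g}{n-1}V\geq 0$ (resp.\ $>0$ in case (ii)) on the $\partial M$-part of the boundary. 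Using the strong maximum/Hopf principle one shows that every connected component of a sublevel set $\{0\leq V<t\}$, for $0<t<\min_{\partial M\cap\overline{M^+}}V$, must meet $\Sigma$; since the gradient flow of $V$ issues transversally from the connected hypersurface $\Sigma$, these sublevel sets and their regular level sets stay connected, so the level-set foliation realises $\overline{M^+}$ as a collar-like region joining $\Sigma$ to the $\partial M$-part, and a second boundary component would force an interior maximum of $V$ on the region trapped between two boundary components, contradicting $\Delta_gV\leq 0$ together with $V>0$. This is the step I expect to be the main obstacle: one must argue rigorously that two mean-convex boundary components on the same side of $\Sigma$ are incompatible with the maximum principle for $V$ (equivalently, that $V$ cannot develop ``extra'' critical behaviour producing disconnected level sets), and it may be cleaner to replace the Morse-theoretic picture by an integral identity of Reilly/Pohozaev type on $\overline{M^+}$.

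For the last assertion, suppose $V\equiv c_i\neq 0$ on a component $\partial_iM$ (the constant is nonzero because $\Sigma\subset\operatorname{Int} M$). The first relation of~\eqref{static2} already forces $B_g=\tfrac{H_g}{n-1}g$ there, so $\partial_iM$ is totally umbilic; the constancy of $V$ gives in addition $\nabla V=(\partial V/\partial\nu)\,\nu$ on $\partial_iM$, so that $\Delta_gV=\Hess_gV(\nu,\nu)+H_g\,\partial V/\partial\nu$. Combining this with $\Delta_gV=-\tfrac{R_g}{n-1}V$, with $\Hess_gV(\nu,\nu)=V\bigl(\Ric_g(\nu,\nu)-\tfrac{R_g}{n-1}\bigr)$ from~\eqref{static1}, and with $\partial V/\partial\nu=\tfrac{H_g}{n-1}V$, and dividing by $c_i$, yields $\Ric_g(\nu,\nu)=-\tfrac{H_g^2}{n-1}$ on $\partial_iM$. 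Feeding this and the umbilicity into the Gauss equation gives
\begin{equation*}
R_{\partial_iM}=R_g-2\Ric_g(\nu,\nu)+\tfrac{n-2}{n-1}H_g^2=\epsilon\,n(n-1)+\tfrac{n}{n-1}H_g^2,
\end{equation*}
which is strictly positive in all cases (if $H_g$ vanishes somewhere we are in case (i) and the value is $n(n-1)>0$ there; in case (ii) $H_g>0$). When $n=3$, a closed surface of positive scalar curvature is diffeomorphic to $S^2$ by Gauss--Bonnet; to upgrade ``sphere'' to ``round sphere'' one shows moreover that $H_g$ is constant along $\partial_iM$ — by differentiating the static equation tangentially along $\partial_iM$ and using the Codazzi and contracted Bianchi identities — so that $R_{\partial_iM}=6\epsilon+\tfrac32 H_g^2$ is constant, whence $\partial_iM$ carries a metric of constant positive Gauss curvature and is a round sphere.
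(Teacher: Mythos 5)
Your reduction to the two components of $M\setminus\Sigma$, your ``at least one boundary component per side'' argument in case (ii) (which is the paper's Lemma~\ref{lemma:main}), and your treatment of the final assertion (deriving $\Ric_g(\nu,\nu)=-H_g^2/(n-1)$ from $V=const$, hence $R_{\partial_iM}=R_g+\tfrac{n}{n-1}H_g^2>0$, and roundness for $n=3$ via constancy of $H_g$ and Gauss--Bonnet/Hopf) are all correct and agree with the paper. The genuine gap is exactly the step you flagged: showing that each component of $M\setminus\Sigma$ contains \emph{at most one} component of $\partial M$. Your maximum-principle/level-set sketch does not close it. In case (i) you have $\Delta_gV=-nV\leqslant 0$ on $\{V>0\}$, i.e.\ $V$ is \emph{super}harmonic there, so an interior maximum is not a contradiction at all (on the hemisphere of $\mathbb S^n$ the potential does attain an interior maximum); the sign is backwards. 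In case (ii), the scalar data you use --- $\Delta_gV=0$, $V=0$ on $\Sigma$, $\partial_\nu V=\tfrac{H}{n-1}V>0$ on the outer boundary --- is just a mixed Dirichlet--Robin (Steklov-type) problem, which on a generic region with several outer boundary components admits positive solutions; so no scalar maximum-principle argument alone can exclude a second boundary component, and the claim that the sublevel sets and regular level sets ``stay connected'' is unjustified once $V$ has interior critical points. One must use the full static system, not only the trace equation and the Robin condition.

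The paper's proof supplies precisely this missing input: for a component $\Omega$ of $M\setminus\Sigma$ it forms the associated singular Einstein manifold $N^{n+1}$ with metric $h=V^2d\theta^2+g$ (smooth across $\Sigma$ after normalizing $|\nabla V|=1$ there), which satisfies $\Ric_h=\epsilon n h$ and whose boundary components $\mathbb S^1\times S_i$ are umbilic with mean curvature $\tfrac{n}{n-1}H_{S_i}$. If $N$ had two boundary components, Synge's second-variation formula applied to a minimizing geodesic between them gives
$0\leqslant -\epsilon n l-\tfrac{n}{n-1}(H_{S_1}+H_{S_2})$, which is negative under (i) ($\epsilon=1$, $H_{S_i}\geqslant 0$) and under (ii) ($\epsilon=0$, $H_{S_i}>0$); this Frankel-type argument is what forces at most one boundary component per side. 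If you want to salvage your route, your own suggestion of replacing the Morse-theoretic picture by an integral identity is the right instinct, but it would have to involve the Ricci/Hessian structure of \eqref{static} (as the Einstein lift does), not merely $\Delta_gV$ and the boundary condition.
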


This theorem is a straightforward generalization of Theorem 3 in~\cite{cruz2023static}. Note that not all possible combinations of $R_g$ and $H_g$ are feasible for a compact static manifold with boundary (see~\cite[Table 1]{sheng2024static}).

Next, using the famous Bunting-Masood-ul-Alam argument (see~\cite{bunting1987nonexistence}) in the same way as in the proof of~\cite[Theorem 3]{cruz2023static}, we obtain the following corollary.

\begin{corollary}
Let $(M^3,g,V)$ be a compact static manifold with boundary, satisfying the case $(ii)$ in the previous theorem, $V=const$ on $\partial M$ and $\Sigma=V^{-1}(0)\subset Int~M$ is connected. Then $(M^3,g)$ is the doubling of $([2m,3m]\times \mathbb S^2,g_m)$, with $m=2/(3\sqrt{3}H_S)$, and 
$$
V=V_m(r)=\dfrac{1-\dfrac{m}{2r}}{1+\dfrac{m}{2r}},
$$ 
the Schwarzschild potential in the isotropic coordinates.
\end{corollary}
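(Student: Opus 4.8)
The plan is to run the Bunting--Masood-ul-Alam static black hole uniqueness scheme, adapted --- exactly as in the proof of~\cite[Theorem 3]{cruz2023static} --- to a situation with photon spheres in place of an asymptotically flat end. First I would unpack Theorem~\ref{thm:rigidity}(ii): $\partial M$ has exactly two components $\Sigma_1,\Sigma_2$, each of positive scalar curvature, hence (since $n=3$) a round sphere. Because $\Sigma=V^{-1}(0)\subset Int~M$, the potential is a nonzero constant on each $\Sigma_i$, so the boundary relations~\eqref{static2} give $B_g=\frac{H_g}{2}g$ on $\Sigma_i$: every boundary component is totally umbilic, and a short computation from~\eqref{static1} (using $R_g=0$ and that $V$ is constant on $\Sigma_i$) shows that $H_g$ and $\partial V/\partial\nu$ are constant there too. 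Thus $\Sigma_1$ and $\Sigma_2$ are photon spheres. Next, since $R_g=0$ the function $V$ is harmonic and so has no interior extremum; being constant on $\partial M$ and vanishing on the interior hypersurface $\Sigma$, it must change sign, so $\Sigma$ separates $M$ into $M^{\pm}=\{\pm V>0\}$. A boundary component on which $V$ vanished would force $\partial V/\partial\nu=\frac{H_g}{2}V=0$ while $\nabla V\neq0$ on the regular level set $\Sigma$, which is impossible; hence exactly one of $\Sigma_1,\Sigma_2$ lies in each half. On $\Sigma$ itself $V=0$ and, by~\eqref{static1}, $\Hess_gV=0$, so $\Sigma$ is a totally geodesic minimal $2$-sphere, i.e. a non-degenerate horizon.

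With this structure in hand I would glue to each photon sphere $\Sigma_i$ the exterior Schwarzschild region $([3m_i,\infty)\times\mathbb S^2,g_{m_i})$ carrying its standard potential, choosing $m_i$ so that the areas match; the photon sphere conditions then force the induced metrics, second fundamental forms, lapses and their normal derivatives to agree, so the gluing is $C^{1,1}$. This yields a complete static vacuum manifold $(\widetilde M,\widetilde g,\widetilde V)$ with two asymptotically flat ends, sign-changing potential, and the horizon $\Sigma$ separating $\{\widetilde V\ge0\}$ from $\{\widetilde V\le0\}$. On the one-ended manifold-with-horizon $\{\widetilde V\ge0\}$ I would then run the Bunting--Masood-ul-Alam argument~\cite{bunting1987nonexistence}: the conformal metrics $(\frac{1+\widetilde V}{2})^{4}\widetilde g$ and $(\frac{1-\widetilde V}{2})^{4}\widetilde g$ glue $C^{1,1}$ across $\Sigma$ and, after compactifying the distinguished end to a point, give an asymptotically flat, scalar-flat manifold of zero ADM mass; the rigidity case of the positive mass theorem forces it to be flat $\mathbb R^{3}$, whence $\{\widetilde V\ge0\}$, and symmetrically $\{\widetilde V\le0\}$, is a half of the spatial Schwarzschild manifold. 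Since the two halves share the horizon $\Sigma$, their masses coincide, $m_1=m_2=:m$, and $(\widetilde M,\widetilde g)$ is the spatial Schwarzschild manifold of mass $m$. Deleting the two glued exterior pieces identifies $(M,g)$ with the doubling, across the horizon, of the Schwarzschild region lying between the horizon and the photon sphere --- that is $([2m,3m]\times\mathbb S^2,g_m)$ with potential $V_m$ --- and, since the Schwarzschild photon sphere has mean curvature $H_S=\frac{2}{3\sqrt3\,m}$, this gives $m=2/(3\sqrt3\,H_S)$.

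The step I expect to be the main obstacle is the gluing. One must verify that the boundary data of each $\Sigma_i$ --- umbilicity together with the constancy of $H_g$, $V$ and $\partial V/\partial\nu$, all tied together by~\eqref{static1}--\eqref{static2} --- coincide \emph{exactly} with those of a Schwarzschild photon sphere of the same area, and that the glued metric and potential are regular enough across the hypersurfaces (at least $C^{1,1}$, so that the Lipschitz-metric version of the positive mass theorem applies). Keeping careful track of the sign change of $V$ and of the two asymptotic ends throughout the conformal construction is the other technical point; both are handled exactly as in~\cite{cruz2023static}.
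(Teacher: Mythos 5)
Your proposal follows essentially the same route as the paper, which establishes this corollary precisely by invoking the Bunting--Masood-ul-Alam argument as in the proof of Theorem 3 of Cruz--Nunes: attach exterior Schwarzschild regions beyond the photon sphere along the (round, umbilic, photon-sphere-type) boundary components, check $C^{1,1}$ matching of the data, and use the conformal-factor/positive-mass rigidity argument to identify the result with the Schwarzschild manifold and hence $(M,g)$ with the doubled region between horizon and photon sphere. Your only deviation---running the classical two-conformal-factor version of the argument on each half $\{\pm V\geqslant 0\}$ separately, rather than doubling across $\Sigma$ and using the single factor $\left(\tfrac{\widehat V+1}{2}\right)^4$ together with the conformal flatness/Kobayashi step as the paper does when it spells out this scheme in the proof of Theorem~\ref{thm:noncomp}---is a cosmetic variant of the same argument.
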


The following result relies on the analysis of the zero-level set of the potential, which is a minimal (totally geodesic) hypersurface. In many known examples (see Section~\ref{sec:examples}) the zero level set of the potential (if connected) has index one. We study geometric properties of static manifolds with boundary, whose zero-level set of the potential is connected and has index one.

\begin{theorem}\label{thm:ind}
Let $(M,g,V)$ be a compact 3-dimensional static manifold with boundary with $R_g=6\epsilon$, where $\epsilon \in \{-1,0,1\}$. Suppose that $\Sigma=V^{-1}(0)$ is connected, intersects the boundary of $M$ and has index one. Then there exists a positive constant $C$ such that 
\begin{equation}\label{eq:mineq}
\frac12\sum_{i=1}^bH_i|\partial_i\Sigma| \leqslant 2\pi(\gamma+b)+C|\Sigma|\left(\max_\Sigma K_\Sigma-3\epsilon\right),
\end{equation} 
where $\gamma$ and $b$ is the genus and the number of boundary components of $\Sigma$, respectively, $K_\Sigma$ is its Gaussian curvature, $H_i$ is the mean curvature of the connected component of $\partial M$, containing the connected component $\partial_i\Sigma,~i=1,\ldots, b$ of $\partial\Sigma$. Moreover, in the case where $R_g=0,~H_g=2$ equality in~\eqref{eq:mineq} is achieved if, and only if, $(M,g)$ is isometric to the Euclidean unit ball centered at the origin $(\mathbb B^3,\delta)$, $\Sigma$ is a flat unit disk, and $V$ is given by $V(x) = x\cdot v$ for some vector $v \in \mathbb R^3 \setminus \{0\}$.
\end{theorem}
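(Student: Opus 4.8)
The plan is to combine the stability (index-one) condition on $\Sigma$ with the Gauss equation on the totally geodesic surface $\Sigma$, and integrate over $\Sigma$. Since $\Sigma$ is a solution of the second equation in~\eqref{static1} and~\eqref{static2}, it is a free boundary minimal (indeed totally geodesic, since $\Hess_g V$ vanishes on $\Sigma$ by~\eqref{static1} as $V=0$ there) hypersurface in $(M,g)$. Being totally geodesic, the second fundamental form $A_\Sigma$ vanishes, so the stability operator is $-\Delta_\Sigma - \Ric_g(N,N)$ with a Robin condition on $\partial\Sigma$ coming from the second fundamental form of $\partial M$. First I would record that $\Sigma$ meets $\partial M$ orthogonally: differentiating $V$ along $\partial M$ and using the boundary condition $\partial V/\partial\nu = \tfrac{H_g}{n-1}V$ together with $V(B_g-\tfrac{H_g}{n-1}g)=0$, one checks that $\nabla V$ is tangent to $\partial M$ along $\Sigma\cap\partial M$, hence $N=\nabla V/|\nabla V|$ is tangent to $\partial M$ and $\Sigma \perp \partial M$. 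Moreover, along $\partial_i\Sigma$ the geodesic curvature of $\partial_i\Sigma$ in $\Sigma$ equals $B_g$ evaluated on the unit tangent of $\partial_i\Sigma$, which by~\eqref{static2} is $\tfrac{H_g}{n-1} = \tfrac{H_i}{2}$ (here $n=3$).

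Next I would use that index one means there is exactly one negative eigenvalue, so for any function $\varphi$ on $\Sigma$ that is $L^2$-orthogonal to the first eigenfunction, the stability inequality
\[
\int_\Sigma |\nabla_\Sigma \varphi|^2 - \Ric_g(N,N)\varphi^2 \,\,-\,\, \int_{\partial\Sigma} \frac{H_i}{2}\varphi^2 \,\geqslant\, 0
\]
holds; equivalently, testing with $\varphi \equiv 1$ only gives a one-sided bound after subtracting a controlled multiple of the first eigenvalue, which is where the constant $C$ and the factor $|\Sigma|(\max_\Sigma K_\Sigma - 3\epsilon)$ enter. The key algebraic input is to rewrite $\Ric_g(N,N)$ using $R_g = 6\epsilon$ and the Gauss equation on the totally geodesic $\Sigma$: $2K_\Sigma = R_g - 2\Ric_g(N,N)$, so $\Ric_g(N,N) = 3\epsilon - K_\Sigma$. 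Substituting into the stability inequality with test function $\varphi = 1$ (after correcting for non-orthogonality to the ground state, which costs at most $|\Sigma|(\max_\Sigma K_\Sigma - 3\epsilon)$ times a universal constant, since the ground state eigenvalue is bounded in terms of $\sup_\Sigma(K_\Sigma - 3\epsilon)$ plus boundary terms) yields
\[
\int_{\partial\Sigma}\frac{H_i}{2} \,\leqslant\, \int_\Sigma (3\epsilon - K_\Sigma) \,+\, C|\Sigma|\big(\max_\Sigma K_\Sigma - 3\epsilon\big).
\]
Then I would invoke Gauss–Bonnet for the surface with boundary $\Sigma$: $\int_\Sigma K_\Sigma + \int_{\partial\Sigma}\kappa_{\partial\Sigma} = 2\pi\chi(\Sigma) = 2\pi(2-2\gamma-b)$. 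But the geodesic curvature term $\int_{\partial\Sigma}\kappa_{\partial\Sigma}$ is exactly $\int_{\partial\Sigma}\tfrac{H_i}{2}$ by the orthogonality computation above, so $-\int_\Sigma K_\Sigma = \int_{\partial\Sigma}\tfrac{H_i}{2} - 2\pi(2-2\gamma-b)$. Combining with the displayed inequality, the $\int_{\partial\Sigma}\tfrac{H_i}{2}$ terms on both sides are handled so as to leave $\tfrac12\sum_i H_i|\partial_i\Sigma| \leqslant 2\pi(\gamma+b) + C|\Sigma|(\max_\Sigma K_\Sigma - 3\epsilon)$, possibly after discarding the negative $2\pi(1-\gamma)$ or similar nonpositive contributions and relabeling constants; I would track the constants carefully to make sure the right-hand side is exactly $2\pi(\gamma+b)$ plus the error term. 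The main obstacle is making the passage from the index-one hypothesis to an inequality with test function $\varphi\equiv 1$ quantitative: one needs a clean bound on the first eigenvalue of the Jacobi operator and on the projection of $1$ onto the first eigenfunction purely in terms of $|\Sigma|$ and $\max_\Sigma K_\Sigma - 3\epsilon$, and this is where the constant $C$ is pinned down.

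For the rigidity case $R_g = 0$, $H_g = 2$ (so $\epsilon = 0$): equality forces $\max_\Sigma K_\Sigma = 0$ (the error term vanishes) and forces $\varphi\equiv 1$ to be, up to the correction, an eigenfunction at the threshold, which together with $K_\Sigma \leqslant 0$ and Gauss–Bonnet pushes $K_\Sigma \equiv 0$, so $\Sigma$ is flat; the boundary $\partial\Sigma$ then has constant geodesic curvature $\tfrac{H_i}{2}=1$ and, being a flat surface with such boundary saturating the isoperimetric-type bound, $\Sigma$ is a flat unit disk. Equality in the Gauss equation step also forces $\Ric_g(N,N)=0$ along $\Sigma$, and I would then run the Bunting–Masood-ul-Alam / Reilly-type argument exactly as in~\cite{cruz2023static} (or as used for the Corollary above) to conclude $(M,g)$ is isometric to the Euclidean unit ball $(\mathbb B^3,\delta)$ with $\Sigma$ an equatorial disk; the identification $V(x) = x\cdot v$ follows since the linear functions on $\mathbb B^3$ are exactly the static potentials vanishing on a hyperplane through the origin and satisfying the free-boundary condition $\partial V/\partial\nu = V$ on $\mathbb S^2$.
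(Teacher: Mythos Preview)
Your approach has a genuine gap: the way you propose to extract an inequality from the index-one hypothesis does not work, and the term $2\pi(\gamma+b)$ does not arise from Gauss--Bonnet. Testing the Jacobi form with $\varphi\equiv 1$ gives no useful sign, since $1$ may well lie largely in the direction of the first (negative) eigenfunction; your proposed ``correction for non-orthogonality to the ground state, which costs at most $|\Sigma|(\max_\Sigma K_\Sigma-3\epsilon)$ times a universal constant'' is not justified --- there is no mechanism by which the projection of $1$ onto the ground state, or the size of the first eigenvalue, is controlled solely by $\max_\Sigma K_\Sigma-3\epsilon$ and $|\Sigma|$. Moreover, Gauss--Bonnet on $\Sigma$ produces $2\pi\chi(\Sigma)=2\pi(2-2\gamma-b)$, which does not rearrange into $2\pi(\gamma+b)$ by ``discarding nonpositive contributions''; these quantities even have opposite monotonicity in $\gamma$ and $b$.

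The paper obtains $2\pi(\gamma+b)$ from a completely different source. By a theorem of Gabard, $\Sigma$ admits a proper branched conformal cover $u\colon\Sigma\to\mathbb B^2$ of degree at most $\gamma+b$; the coordinate functions $u_1,u_2$ are then used as test functions. The Hersch balancing trick makes $u_1,u_2$ $L^2(\partial\Sigma)$-orthogonal to the first Jacobi eigenfunction, so index one forces $S(u_j,u_j)\geqslant 0$ for $j=1,2$. Summing, using $u_1^2+u_2^2=1$ on $\partial\Sigma$, and the conformal invariance of Dirichlet energy $\int_\Sigma|\nabla u|^2=2\pi\deg(u)\leqslant 2\pi(\gamma+b)$, yields~\eqref{eq:mineq} directly, with $C=|\Sigma|^{-1}\int_\Sigma(u_1^2+u_2^2)$. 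Gauss--Bonnet only enters in the equality analysis (where $K_\Sigma$ is already constant), and the rigidity is concluded by invoking the area estimate and rigidity of \cite[Theorem~2]{cruz2023static}, not a Bunting--Masood-ul-Alam argument.
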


Here and everywhere in the sequel $|M|$ denotes the volume of a compact submanifold $M$ of a Riemannian manifold $(N,h)$ with respect to the induced metric.

\begin{remark}
In the case where $\Sigma$ has a connected component of index one, inequality~\eqref{eq:mineq} holds for that component. If equality in~\eqref{eq:mineq} holds, then the Gauss curvature of $\Sigma$ is constant.
\end{remark}

Let us give some motivation for our next result. Shen in~\cite{shen1997note} and Boucher, Gibbons, and Horowitz in~\cite{boucher1984uniqueness} proved the following theorem.

\begin{theorem}
Let $(M^3,g,V)$ be a compact static manifold. Suppose that $R_g=6$ and $\Sigma=V^{-1}(0)$ is connected. Then $\Sigma$ is a two-sphere and $|\Sigma|\leqslant 4\pi$. Moreover, equality holds if and only if $(M^3,g)$ is isometric to the standard 3-sphere.
\end{theorem}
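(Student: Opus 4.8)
The plan is to treat $\Sigma=V^{-1}(0)$ as a closed totally geodesic surface, reduce the area estimate to the Gauss--Bonnet theorem together with an upper bound for $\int_\Sigma\Ric_g(\nu,\nu)$, and extract the rigidity from the equality case of that bound.

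First I would record the structure of $\Sigma$, exactly as in the derivation of \eqref{static1}--\eqref{static2}. A point at which both $V$ and $\nabla V$ vanish would force, along each geodesic through it, a homogeneous linear second order ODE for $V$ with zero initial data, hence $V\equiv 0$ near that point and then on all of $M$ by connectedness; so $\{V=0\}\cap\{\nabla V=0\}=\varnothing$ and $\Sigma$ is a smooth closed embedded two-sided (hence orientable) surface. Restricting $\Hess_g V=V\bigl(\Ric_g-3g\bigr)$ to $\Sigma$ gives $\Hess_g V\equiv 0$ there, so $\Sigma$ is totally geodesic, and evaluating $\Hess_g V(X,\nu)$ for $X$ tangent to $\Sigma$ gives $X(|\nabla V|)=0$, so $\phi:=|\nabla V|$ equals a positive constant $c$ on $\Sigma$. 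Since $\int_M\Delta_g V=0=-3\int_M V$, the potential changes sign, and $\Sigma$ separates $M$ into the two nonempty open sets $M^{\pm}=\{\pm V>0\}$.

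The heart is the estimate. For the totally geodesic $\Sigma\subset M^3$ the Gauss equation reads $K_\Sigma=\tfrac{R_g}{2}-\Ric_g(\nu,\nu)=3-\Ric_g(\nu,\nu)$, so Gauss--Bonnet gives
\[
2\pi\chi(\Sigma)=3|\Sigma|-\int_\Sigma\Ric_g(\nu,\nu)\,\dvol .
\]
It therefore suffices to prove $\chi(\Sigma)=2$ (so $\Sigma\cong\mathbb S^2$) and $\int_\Sigma\Ric_g(\nu,\nu)\,\dvol\leqslant 8\pi$, whence $|\Sigma|\leqslant 4\pi$. To obtain both I would foliate $M^{+}$ by the regular level sets $\Sigma_t=\{V=t\}$, $0\leqslant t<\max_M V$, write $g=\phi^{-2}\,dt^2+g_t$, and analyse $a(t):=|\Sigma_t|$: minimality of $\Sigma$ gives $a'(0)=0$, and the second variation formula with lapse $f=1/\phi$, using the constancy of $\phi$ on $\Sigma$, gives $a''(0)=-c^{-2}\int_\Sigma\Ric_g(\nu,\nu)\,\dvol=-c^{-2}\bigl(3|\Sigma|-2\pi\chi(\Sigma)\bigr)$. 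Since $a(t)\to 0$ as $t\to\max_M V$, one then runs the static equations as a first order system for $(\phi,g_t,B_t)$ along the foliation --- using $\det B_t\leqslant\tfrac14 H_t^2$ to control the Gauss--Bonnet integrand of $\Sigma_t$, and repeating on $M^{-}$ --- to force the round-$\mathbb S^3$ behaviour, namely $\chi(\Sigma)=2$ and $a(0)=|\Sigma|\leqslant 4\pi$. (If in addition $\Sigma$ has Morse index $\leqslant 1$, as in all the examples, the bound $\int_\Sigma\Ric_g(\nu,\nu)\leqslant 8\pi$ can alternatively be read off from a Hersch balancing: take a conformal diffeomorphism $\Phi=(\Phi^1,\Phi^2,\Phi^3)\colon\Sigma\to\mathbb S^2\subset\mathbb R^3$ with $\int_\Sigma\Phi^i\psi_1^2\,\dvol=0$, $\psi_1>0$ the first eigenfunction of the Jacobi operator $\Delta_\Sigma+\Ric_g(\nu,\nu)$; then $\sum_i\int_\Sigma|\nabla_\Sigma\Phi^i|^2\,\dvol\geqslant\int_\Sigma\Ric_g(\nu,\nu)\,\dvol$, and the left-hand side equals $8\pi$.)

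For the rigidity, equality $|\Sigma|=4\pi$ must propagate back through the chain above: the Gauss--Bonnet integrand of every $\Sigma_t$ is extremal, forcing $B_t$ umbilic and the static system to its model solution, so $(M,g)$ is Einstein with $R_g=6$, hence of constant sectional curvature $1$; being closed and admitting the non-trivial solution $V$ of Obata's equation $\Hess_g V+Vg=0$, it is the round $\mathbb S^3$. The step I expect to be the main obstacle is exactly the global part --- continuing the analysis of $a(t)$ and $B_t$ along the foliations of $M^{\pm}$ across the critical points of $V$ and turning the infinitesimal data at $\Sigma$ into the global comparison $a(t)\leqslant a(0)\leqslant 4\pi$; the Gauss--Bonnet bookkeeping, the second variation computation and the Obata step are routine.
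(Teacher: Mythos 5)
Your preliminary reductions are fine (smoothness of $\Sigma$, total geodesy, constancy of $\kappa=|\nabla^g V|_g$, the Gauss--Bonnet identity $2\pi\chi(\Sigma)=3|\Sigma|-\int_\Sigma\Ric_g(\nu,\nu)\,dv$), but the heart of the argument is missing: you reduce the theorem to the two claims $\chi(\Sigma)=2$ and $\int_\Sigma\Ric_g(\nu,\nu)\,dv\leqslant 8\pi$, and then never actually prove either. The level-set foliation step is only a declaration of intent: the second variation computation $a''(0)=-c^{-2}\int_\Sigma\Ric_g(\nu,\nu)\,dv$ is purely infinitesimal data at $\Sigma$ and gives no bound by itself; the foliation $\Sigma_t=\{V=t\}$ degenerates at the critical points of $V$ in $M^{\pm}$ (which must exist, e.g.\ at $\max V$), and no monotone quantity or comparison argument is identified that would ``force the round-$\mathbb S^3$ behaviour.'' You yourself flag this global step as the main obstacle, which is exactly where the proof lives. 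The parenthetical Hersch alternative does not repair this: it assumes an index bound that is not among the hypotheses, and it presupposes a degree-one conformal map $\Sigma\to\mathbb S^2$, i.e.\ it already assumes $\Sigma$ is a sphere, which is part of what has to be shown.

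The mechanism that actually closes this gap is the divergence identity the paper itself uses, namely Shen's identity \eqref{eq:shen}, $\mydiv_g\left(\frac{1}{V}d\left(|\nabla^g V|^2_g+V^2\right)\right)=2V|\mathring{\Ric_g}|^2_g$, integrated over a component of $\{V>0\}$; in the closed case there are no boundary terms $S_j$, and Theorem~\ref{thm:ric} (with $\epsilon=1$) reduces to $\int_{\{V>0\}}V|\mathring{\Ric_g}|^2_g\,dv_g=\kappa\left(2\pi\chi(\Sigma)-|\Sigma|\right)$. Nonnegativity of the left-hand side gives at once $\chi(\Sigma)>0$, hence $\Sigma\cong\mathbb S^2$ and $|\Sigma|\leqslant 4\pi$ -- both of your unproved claims in one stroke, with no foliation and no index hypothesis. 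Equality forces $\mathring{\Ric_g}=0$ on $\{V>0\}$ (and on $\{V<0\}$ by the same identity there), so $g$ is Einstein with $R_g=6$; then $\Hess_gV=-Vg$ is Obata's equation and the closed manifold is the round $\mathbb S^3$ -- that final step of yours is correct, but only once the Einstein condition has been derived, which your sketch does not achieve. As written, the proposal is a correct reduction plus an unproved core, so it does not constitute a proof.
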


This theorem was later generalized to allow more boundary components by Ambrozio in~\cite[Proposition 6]{ambrozio2017static}. Very recently, Cruz and Nunes in~\cite{cruz2023static} (see Theorem 2 therein) proved an analog of this theorem for static manifolds with boundary. Our next result generalizes Ambrozio's formula for static manifolds with boundary.

\begin{theorem}\label{thm:ric}
 Let $(M^3,g,V)$ be a compact orientable static manifold with boundary such that $\Sigma=V^{-1}(0)\subset Int~M$. Let $R_g=6\epsilon$, where $\epsilon \in \{-1,0,1\}$. Consider a connected component $\Omega$ of $M\setminus\Sigma$. Let $S_1,\ldots,S_b$ be the connected components of $\partial M\cap \partial\Omega$, $\Sigma_1,\ldots,\Sigma_r$ the connected components of $\Sigma\cap \Omega$, and $\kappa_{i}$ the value of $|\nabla^g V|_g$ on $\Sigma_i$. Then the following identity holds:
\begin{align}\label{eq:tric}
\int_{\Omega}V|\mathring{\Ric_g}|^2_g\,dv_g+\sum_{j=1}^bH_j\left(\frac{H_j}{2}+\epsilon\right)\int_{S_j}V\,&ds_g=\\\nonumber&= sign(V)\sum_{i=1}^r\kappa_i\left(2\pi\chi(\Sigma_i)-\epsilon|\Sigma_i|\right).
\end{align}
Here, $\mathring{\Ric_g}$ stands for the trace-free part of the Ricci tensor. 
\end{theorem}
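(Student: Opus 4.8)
The plan is to integrate a Bochner-type identity for the static potential over the region $\Omega$ and apply the divergence theorem, keeping careful track of the boundary contributions from both $\partial M \cap \partial\Omega$ and $\Sigma \cap \partial\Omega$. The starting point is the standard computation, valid on any static manifold, of the divergence of $V \mathring{\Ric_g}$ (or equivalently of a suitable combination built from $\nabla V$, $\Ric_g$, and $R_g$). Using \eqref{static1}, namely $\Hess_g V = V(\Ric_g - \tfrac{R_g}{n-1}g)$ together with $\Delta_g V = -\tfrac{R_g}{n-1}V$ and the contracted second Bianchi identity, one derives pointwise in $\Omega$ an identity of the schematic form
\begin{equation*}
\mathrm{div}_g\!\left( \text{something}(\nabla V, \mathring{\Ric_g}) \right) = V\,|\mathring{\Ric_g}|_g^2 + (\text{terms involving } R_g \text{ that vanish since } R_g \text{ is constant}).
\end{equation*}
This is exactly the manifold-without-boundary computation used by Ambrozio, so I will follow \cite[Section 3]{ambrozio2017static} (or \cite[Proposition 6]{ambrozio2017static}) for this step; the novelty is purely in the boundary analysis.

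Next I would integrate over $\Omega$. The boundary $\partial\Omega$ decomposes into the pieces $S_1,\dots,S_b \subset \partial M$ and the pieces $\Sigma_1,\dots,\Sigma_r \subset \Sigma$. On each $\Sigma_i$ the potential vanishes, $V|_{\Sigma_i}=0$, and $\nabla^g V$ is normal to $\Sigma_i$ with $|\nabla^g V|_g = \kappa_i$ constant on $\Sigma_i$ (this constancy on connected components of $\Sigma$ is a standard consequence of \eqref{static1}; $\Sigma$ is totally geodesic). Because $V=0$ there, only the terms in the flux that are linear in $\nabla V$ survive on $\Sigma_i$, and they reduce — after using the totally geodesic property and the Gauss equation relating $\Ric_g$, the second fundamental form, and the intrinsic curvature $K_{\Sigma_i}$ — to a multiple of $\kappa_i \int_{\Sigma_i}(K_{\Sigma_i}) ds_g$; the Gauss–Bonnet theorem then converts $\int_{\Sigma_i} K_{\Sigma_i}\,ds_g$ into $2\pi\chi(\Sigma_i)$ (note $\Sigma_i$ is closed since $\Sigma \subset Int~M$), and the pointwise Gauss relation contributes the $-\epsilon|\Sigma_i|$ term; the overall sign is $sign(V)$ because the unit normal to $\Sigma_i$ pointing out of $\Omega$ is $\pm \nabla^g V/\kappa_i$ depending on whether $V>0$ or $V<0$ on $\Omega$. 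On each $S_j \subset \partial M$ the relevant data is supplied by \eqref{static2}: $B_g = \tfrac{H_g}{n-1}g$ on $S_j$ (so $S_j$ is umbilic) and $\partial V/\partial\nu = \tfrac{H_g}{n-1}V$. With $n=3$ this gives $B_g=\tfrac{H_j}{2}g$ and $\partial V/\partial\nu = \tfrac{H_j}{2}V$; substituting these into the flux term on $S_j$ and again invoking the Gauss equation along $S_j$ produces precisely $H_j\big(\tfrac{H_j}{2}+\epsilon\big)\int_{S_j} V\,ds_g$. Collecting all pieces yields \eqref{eq:tric}.

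The step I expect to be the main obstacle is the bookkeeping on the boundary portions $S_j$: one must pick the correct divergence identity so that, after integration by parts, the $S_j$-flux can be expressed entirely in terms of $V$, $H_j$, and $\epsilon$ without leftover derivative-of-$V$ or full-Ricci terms. This requires using both equations in \eqref{static2} simultaneously and the umbilicity of $\partial M$ to rewrite $\mathring{\Ric_g}(\nu,\cdot)$ and the tangential Hessian of $V$ along $\partial M$; it is the analogue of the computation in \cite[Theorem 2]{cruz2023static} but must be carried out for a general boundary component rather than assuming $V$ constant on $\partial M$. A secondary technical point is verifying that the divergence structure used by Ambrozio does not pick up distributional contributions along $\Sigma$ — this is fine because $V$, $g$, and hence $\mathring{\Ric_g}$ are smooth across $\Sigma$, so the only boundary terms are the genuine geometric fluxes listed above. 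Once these flux computations are in place, the identity \eqref{eq:tric} follows by summing over $j=1,\dots,b$ and $i=1,\dots,r$ and using Gauss–Bonnet on each closed surface $\Sigma_i$.
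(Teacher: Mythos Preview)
Your proposal is correct and follows essentially the same route as the paper: integrate Shen's divergence identity $\mathrm{div}_g\big(\tfrac{1}{V}\nabla^g(|\nabla^g V|^2_g+\epsilon V^2)\big)=2V|\mathring{\Ric_g}|^2_g$ over $\Omega$, rewrite the flux as $2(\Ric_g(\nabla^g V,\nu)-2\epsilon\,\nu(V))$, and evaluate it on each $\Sigma_i$ via the contracted Gauss equation and Gauss--Bonnet, and on each $S_j$ via the static boundary conditions. The only refinement to your anticipated ``main obstacle'' is that the pointwise $S_j$-flux does retain a derivative term, namely $-2\Delta_{S_j}V$ (after using $\Ric_g(\nu,X)=0$ for $X$ tangent to $\partial M$ and the relation $V\Ric_g(\nu,\nu)=-\Delta_{S_j}V-\tfrac{H_j^2}{2}V$), but this integrates to zero because $S_j$ is closed.
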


\begin{remark}\label{rem:highdim}
The proof follows the same steps as the proof of Proposition 6 in~\cite{ambrozio2017static}. However, formula \eqref{eq:tric} also follows from Schoen's Pohozaev-type identity (see~\cite[Proposition 1.4]{schoen1988existence}) by applying it to the vector field $\nabla^gV$ on $\Omega$.
\end{remark}

\begin{remark}
It is not difficult to generalize this theorem to any dimension $n$.
\end{remark}

Let us return to the example of $(\mathbb B^3,\delta,V)$, where $V$ is given by $V(x) = x\cdot v$ for some vector $v \in \mathbb R^3 \setminus \{0\}$. One can notice that $\Ric_\delta(\nu,\nu)+H^2_g/2=2$, which is the first Laplace eigenvalue of the boundary sphere $\partial \mathbb B^3$. This boundary sphere is a \textit{stable cmc-surface} in $\mathbb E^3$ (see Section~\ref{sec:pre} for the corresponding definitions). It also has constant Gauss curvature. Moreover, we see that the zero-level set of $V$ $\Sigma$ intersects $\partial \mathbb B^3$ \textit{only once}, i.e., the set $\Sigma\cap \partial \mathbb B^3$ is connected.  This phenomenon is explained in our next result.

\begin{theorem}\label{thm:scmc}
Let $(M^n,g,V)$ be a compact static manifold with connected boundary and with non-positive cosmological constant. Assume that $\partial M$ is a stable cmc-hypersurface with constant scalar curvature $R_{\partial M}$. Then $\Sigma=V^{-1}(0)$ is empty or it intersects $\partial M$ only once. In the last case $\Sigma$ is connected. If additionally we assume that $H_g/(n-1)=c>0$ is the first non-zero Steklov eigenvalue of $(M,g)$ with $\Ric_g=0$, then $(M,g)$ is isometric to the Euclidean ball of radius $1/c$ centered at the origin and $V(x)=x\cdot v$ for some $v\in \mathbb R^n\setminus \{0\}$. Moreover, when $n=3$, the following inequality holds
\begin{align}\label{eq:isop}
|\partial M|\leqslant \frac{16\pi\left[\dfrac{\gamma+3}{2}\right]}{R_g+\dfrac32 H^2_g-R_{\partial M}},
\end{align}
where $\gamma$ is the genus of $\partial M$. The equality is achieved if, and only if, $\Sigma$ is (up to homotheties) the standard unit sphere or a Bolza surface
$$
B_{\theta} = \{(z,w)\in \mathbb{C}^2\>| \>
w^2= z (z^4+2\cos 2\theta\cdot z^2+1)\} \cup \{(\infty,\,\infty)\},~\theta_1\leqslant \theta \leqslant \pi/2 - \theta_1,
$$
where $\theta_1\approx 0.65$.
\end{theorem}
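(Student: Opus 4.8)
The plan is to read off the equation satisfied by the restriction $u:=V|_{\partial M}$ and then feed it into two classical inputs: Courant's nodal--domain theorem combined with a stability comparison, and the Yang--Yau (Hersch) eigenvalue bound for closed surfaces. First I would record that $u\not\equiv 0$: if $V\equiv 0$ on $\partial M$ then~\eqref{static2} also gives $\partial V/\partial\nu=0$ on $\partial M$, so $V$ and its gradient vanish along $\partial M$, and unique continuation for~\eqref{static1} forces $V\equiv 0$ on $M$, contradicting non-triviality. On the dense open set $\{V\ne 0\}\cap\partial M$ the first relation in~\eqref{static2} reads $B_g=\tfrac{H_g}{n-1}g$, so by continuity $\partial M$ is totally umbilic and $|B_g|_g^2=\tfrac{H_g^2}{n-1}$. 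Substituting~\eqref{static1}--\eqref{static2} into the boundary decomposition $\Delta_gV=\Delta_{\partial M}u+\Hess_gV(\nu,\nu)+H_g\,\partial_\nu V$ gives
\[
\Delta_{\partial M}u+\big(\Ric_g(\nu,\nu)+|B_g|_g^2\big)u=0 \qquad\text{on }\partial M ,
\]
i.e. $u$ is a Jacobi field of the cmc hypersurface $\partial M$. The Gauss equation together with the constancy of $R_g$, of $H_g$, and of $R_{\partial M}$ shows that the zeroth order coefficient $\mu:=\Ric_g(\nu,\nu)+|B_g|_g^2=\tfrac12(R_g-R_{\partial M})+\tfrac{n}{2(n-1)}H_g^2$ is a \emph{constant}; hence $u$ is an eigenfunction of $-\Delta_{\partial M}$ with constant eigenvalue $\mu$.

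For the first assertion: if $u$ is a non-zero constant then $\Sigma\cap\partial M=\emptyset$, and since $R_g\le 0$ makes $\pm V$ subharmonic on $\{\pm V>0\}$, the maximum principle forces $\Sigma=\emptyset$. Otherwise $u$ is non-constant, so $\mu$ is a positive eigenvalue of $-\Delta_{\partial M}$ and hence $\mu\ge\lambda_1(\partial M)$, while the stability of $\partial M$ as a cmc hypersurface (tested on mean-zero functions, using that $\mu$ is constant) gives $\lambda_1(\partial M)\ge\mu$; therefore $\mu=\lambda_1(\partial M)$ and $u$ is a \emph{first} eigenfunction. By Courant's theorem $u$ has exactly two nodal domains; combined with the fact (again from $R_g\le 0$ and the maximum principle) that no connected component of $\{V>0\}$ or of $\{V<0\}$ can avoid $\partial M$, this forces $\{V>0\}$ and $\{V<0\}$ to be connected, so $M\setminus\Sigma$ has exactly two components and $\Sigma$ meets $\partial M$ in a single piece. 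The connectedness of $\Sigma$ itself I would deduce from this two-component picture together with the extra rigidity that $\Hess_gV=0$ on $\Sigma$ (since $V=0$ there), which makes $\Sigma$ orthogonal to $\partial M$ and $\partial\Sigma$ totally geodesic in $\partial M$.

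Under the extra hypothesis $\Ric_g=0$ one has $R_g=0$, so~\eqref{static1} collapses to $\Hess_gV=0$ and $\Delta_gV=0$: $\nabla^g V$ is parallel, $|\nabla^gV|_g$ is a non-zero constant (normalize it to $1$, legitimate since~\eqref{static} is linear in $V$), and $V$ is harmonic. The boundary relation $\partial_\nu V=cV$ then identifies $V$ as a Steklov eigenfunction with eigenvalue $c$, hence --- as $c=\sigma_1(M,g)$ by assumption --- a \emph{first} Steklov eigenfunction. I would conclude by invoking the rigidity in the sharp estimate between $\sigma_1$ and the boundary mean curvature under $\Ric_g\ge 0$: equality $\sigma_1=H_g/(n-1)$ forces $(M,g)$ to be the Euclidean ball of radius $1/c$, on which $V$ is necessarily a linear function $x\mapsto x\cdot v$. (Alternatively one argues directly: $\nabla^gV$ never vanishes, so $\{V=\pm 1/c\}\subset\partial M$ and at an extremum of $u$ one has $\nabla^gV=\pm\nu$; the $V$-levels foliate $M$ by parallel totally geodesic hypersurfaces, so $M$ is flat, and $B_g=cg$ on the \emph{connected} boundary forces $\partial M$ to be a round sphere of radius $1/c$.) This is the step I expect to be the main obstacle: passing from the local product structure carried by the parallel field $\nabla^gV$, together with the umbilic boundary, to the global identification with a round ball without circularity.

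Finally, for $n=3$: $\partial M$ is a closed surface with constant Gauss curvature $K_{\partial M}=R_{\partial M}/2$, and the Gauss equation for the umbilic $\partial M$ gives $R_g+\tfrac32H_g^2-R_{\partial M}=2\mu=2\lambda_1(\partial M)>0$ (when $\Sigma\ne\emptyset$, which is the only non-trivial case). Thus~\eqref{eq:isop} is exactly the Yang--Yau inequality
\[
\lambda_1(\partial M)\,|\partial M|\ \le\ 8\pi\left\lfloor\tfrac{\gamma+3}{2}\right\rfloor
\]
for the closed surface $\partial M$ of genus $\gamma$ (Hersch's inequality when $\gamma=0$). Equality forces $\partial M$ to carry a $\lambda_1$-maximal metric; by Hersch's rigidity in genus $0$, by the resolution of the genus-$2$ maximization problem, and by the non-sharpness of Yang--Yau in genus $1$, this occurs only for the round sphere or a Bolza surface $B_\theta$ in the stated range of $\theta$, which gives the rigidity in the statement (read off on the boundary surface $\partial M$). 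A secondary technical point is the careful bookkeeping of these equality cases and the identification of the relevant $\lambda_1$-maximal surface with $\partial M$.
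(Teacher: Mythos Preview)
Your proposal is correct and follows essentially the same route as the paper: identify $V|_{\partial M}$ as a Laplace eigenfunction with constant eigenvalue $\mu=\Ric_g(\nu,\nu)+|B_g|^2$, use cmc-stability to pin down $\mu=\lambda_1(\partial M)$ (the paper cites \cite{barbosa2012stability} for this, you prove it directly), invoke Courant's nodal theorem and the maximum principle, and then apply the Yang--Yau inequality for~\eqref{eq:isop}. The only noteworthy difference is in the Ricci-flat rigidity step, where the paper uses the equality case of the Wang--Xia inequality~\eqref{eq:xw} rather than your direct parallel-field argument; your argument for the connectedness of $\Sigma$ via $\Hess_gV=0$ is also more convoluted than necessary --- the paper simply observes that any extra component of $\Sigma$ would have to meet $\partial M$ as well, contradicting the connectedness of $\Sigma\cap\partial M$.
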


\begin{remark}
In the case where a boundary component $S$ of $M$ is a Ricci-positive $n$-dimensional manifold, we can guarantee that if $\Sigma$ intersects the boundary, then it does it only once. Indeed, as we know, the hypersurface $S\cap \Sigma$ is totally geodesic in $S$ (see Lemma~\ref{lemma} item $(a.2)$). Then by the \textit{Frankel property} (see~\cite[Section 2]{frankel1966fundamental}), any two totally geodesic (in fact, minimal) hypersurfaces must intersect, which is impossible. Then $\Sigma$ does not intersect $S$, or it intersects $S$ only once. In particular, it holds for the case where $S$ is an $n$-dimensional round sphere.
\end{remark}

\begin{remark}
In fact, in the proof of Theorem~\ref{thm:scmc} we obtain a stronger than~\eqref{eq:isop} inequality: 
$$
|\partial M|\leqslant \frac{2\Lambda_1(\partial M)}{R_g+\dfrac32 H^2_g-R_{\partial M}},
$$
where $\Lambda_1(\partial M)$ is the supremum of the first normalized Laplace eigenvalue of $\partial M$ over all Riemannian metrics (see Section~\ref{sec:pre}). 
\end{remark}

Finally, our last result concerns the non-compact case (for notation, see \textit{Example 6} in Section~\ref{sec:examples}). 

\begin{theorem}\label{thm:noncomp}
Let $(M^n,g,V)$ be a complete (up to the boundary) one-ended asymptotically Schwarzschildean system, which is a static manifold with compact boundary. If $V\neq 0$ on $M\cup \partial M$, $H_g<0$, and $V=const$ on $\partial M$, then $(M,g)$ is isometric to $Sch^n_{-}$ and $V=V_m$. If $n=3$, $\Sigma=V^{-1}(0)\subset Int~M$ is compact and separates the boundary and the end, $\nu(V)=const\neq 0$ on $\Sigma$, $V=const\neq 0$ on $\partial M$, and $H_g>0$, then $(M,g)$ is isometric to $Sch^3_{+}$ and $V=V_m$. Here $V_m$ is the Schwarzschild potential in the isotropic coordinates
$$
V_m(r)=\dfrac{1-\dfrac{m}{2r^{n-2}}}{1+\dfrac{m}{2r^{n-2}}}.
$$   
\end{theorem}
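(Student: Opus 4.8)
The plan is to reduce both statements to the Bunting--Masood-ul-Alam conformal-doubling argument — the same mechanism behind the Corollary above and behind \cite{bunting1987nonexistence,cruz2023static} — now applied to a non-compact, asymptotically flat configuration. Since the system is asymptotically Schwarzschildean we are in the case $\epsilon=0$, so \eqref{static1} gives $R_g=0$ and $\Delta_g V=0$; normalising $V\to1$ along the end (replacing $V$ by $-V$ if needed), the maximum principle together with the Schwarzschildean decay yields $|V|<1$ on $M\cup\partial M$ (automatic in the first statement, where $V$ has a sign; in the second one also checks $V>-1$, noting $V|_{\partial M}$ is the minimum of the harmonic function $V$), so $u_\pm:=\tfrac12(1\pm V)$ are positive. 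I then record the structure of the relevant hypersurfaces. On $\partial M$ one has $V=V_\ast$ constant and nonzero, so \eqref{static2} forces $\partial M$ to be totally umbilic with $B_g=\tfrac{H_g}{n-1}g$ and $\nu(V)=\tfrac{H_g}{n-1}V_\ast$; restricting \eqref{static1} to $\partial M$ and using that $V$ is constant there makes the tangential part of $\Ric_g$ pure trace on $\partial M$, hence $H_g$ is constant on $\partial M$ and its induced metric is Einstein — a round sphere when $n=3$ by Theorem~\ref{thm:rigidity}. In the second statement the hypothesis $\nu(V)=\mathrm{const}\neq0$ makes $\Sigma=V^{-1}(0)$ a regular level set, and \eqref{static1} gives $\Hess_g V|_\Sigma=0$, so $\Sigma$ is totally geodesic with $|\nabla^g V|_g$ constant on it. Thus $\partial M$ is a non-degenerate photon sphere and $\Sigma$ (when present) a non-degenerate horizon.

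Next I pass to the conformal metrics $g_\pm:=u_\pm^{4/(n-2)}g$. Since $u_\pm$ are $g$-harmonic and $R_g=0$, the conformal transformation law gives $R_{g_\pm}=0$. As $u_+=1-\tfrac m2\rho^{-(n-2)}+o(\rho^{-(n-2)})$ at the end, $g_+=\delta+O(\rho^{-2(n-2)})$ is asymptotically flat with the same end and zero ADM mass, while $u_-\to0$ like $\tfrac m2\rho^{-(n-2)}$, so $g_-$ closes the end off to a point, making $(\widehat M_-,\hat g_-):=(M\cup\{p_\infty\},g_-)$ a compact scalar-flat manifold whose only boundary is $\partial M$. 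On $\partial M$ the induced metrics are $u_\pm(V_\ast)^{4/(n-2)}g|_{\partial M}$; replacing $\hat g_-$ by $\lambda^{4/(n-2)}\hat g_-$ with $\lambda=\tfrac{1+V_\ast}{1-V_\ast}>0$ makes them coincide without disturbing scalar-flatness. Gluing, $\mathcal M:=(M,g_+)\cup_{\partial M}(\widehat M_-,\lambda^{4/(n-2)}\hat g_-)$ is a complete manifold diffeomorphic to $\mathbb R^n$ — one asymptotically flat end from the $g_+$-side and a compact cap — smooth and scalar-flat away from $\partial M$; in the second statement $\Sigma$ sits in the interior where $u_\pm=\tfrac12$, so the conformal changes are smooth near $\Sigma$ and the only possible corner of $\mathcal M$ is along $\partial M$. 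There the metric is $C^0$, and the conformal change of mean curvature together with $\nu(V)=\tfrac{H_g}{n-1}V_\ast$ gives, after a short computation,
\[
H_1+H_2 \;=\; u_+(V_\ast)^{-2/(n-2)}\,H_g\,\frac{2\big((n-2)-nV_\ast^2\big)}{(n-2)\,(1-V_\ast^2)},
\]
where $H_1,H_2$ are the mean curvatures of $\partial M$ in the two glued pieces with respect to their outward normals.

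The argument closes once one knows $V_\ast^2=\tfrac{n-2}{n}$: then $H_1+H_2=0$, $\mathcal M$ is $C^{1,1}$, genuinely scalar-flat, asymptotically flat with zero mass and complete, and the rigidity case of the positive mass theorem (in its low-regularity form, valid for $C^{1,1}$ metrics or after smoothing) forces $\mathcal M$ to be flat $\mathbb R^n$. Flatness of $g_+$ means $g=\phi^{4/(n-2)}\delta$ with $\phi=u_+^{-1}$, and $R_g=0$ becomes $\Delta_\delta\phi=0$, so $u_+^{-1}$ is a positive $\delta$-harmonic function on the exterior region bounded by $\partial M$, equal to $1$ at infinity and constant on $\partial M$; since $\partial M$ is a closed umbilic CMC hypersurface in $\mathbb R^n$ it is a round sphere, whence $u_+^{-1}=1+\tfrac m2\rho^{-(n-2)}$, $V=2u_+-1=V_m$, and $g=(1+\tfrac m2\rho^{-(n-2)})^{4/(n-2)}\delta=g_m$. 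In the first statement $V>0$ places $(M,g,V)$ on the component of Schwarzschild outside the photon sphere on which $V$ does not vanish, i.e. $Sch^n_-$; in the second, $\Sigma=\{V_m=0\}$ lies in the interior and separates $\partial M$ from the end, identifying $(M,g,V)$ with $Sch^3_+$; the parameter $m$ is pinned by the given $H_g$.

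The crux is the input $V_\ast^2=\tfrac{n-2}{n}$ — equivalently, the sign of the corner term $H_1+H_2$. When $H_g$ and $V_\ast$ make $H_1+H_2\le0$ one can be economical: glue with only the matching rescaling, obtain $R\ge0$ distributionally with zero mass, invoke the rigidity of the positive mass theorem for such corner metrics to get flatness, and then read off a posteriori that the corner term vanishes, i.e. $V_\ast^2=\tfrac{n-2}{n}$. The opposite sign has to be excluded directly from the global static structure — for instance by a Heintze--Karcher/Reilly-type inequality on the region between $\partial M$ and a large coordinate sphere, or by monotonicity of a Hawking-type quasi-local mass along the level sets of $V$; for $n=3$ one may instead invoke Theorem~\ref{thm:scmc} (or Theorem~2 of \cite{cruz2023static}). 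This is the step I expect to be the main obstacle. For the second statement there is also the alternative of splitting $M$ along $\Sigma$: the end piece is a one-ended asymptotically flat static vacuum manifold with a single non-degenerate horizon boundary, hence Schwarzschild exterior by static black-hole uniqueness, while the remaining compact piece — bounded by the now-standard horizon $\Sigma$ and by the photon sphere $\partial M$ — is pinned down by the compact uniqueness result underlying the Corollary above; matching along $\Sigma$ yields $Sch^3_+$.
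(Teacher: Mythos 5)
Your argument funnels into a single unproved input: that the constant boundary value satisfies $V_\ast^2=\tfrac{n-2}{n}$, equivalently that the corner term $H_1+H_2$ along $\partial M$ vanishes (or at least has the good sign) after your conformal gluing. You flag this yourself as "the main obstacle", but it is not a technical remainder — it is precisely the hard content of photon sphere uniqueness. A Bunting--Masood-ul-Alam doubling performed directly across a hypersurface on which $V=V_\ast\neq 0$ and $H_g\neq 0$ does not close unless one already knows that the boundary data agree with the Schwarzschild photon-sphere data; deriving those relations from the static equations (and, in the known proofs, gluing in a Schwarzschild neck down to a horizon before doubling) is where the cited uniqueness theorems spend their effort. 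Your proposed substitutes do not fill the hole: Theorem~\ref{thm:scmc} concerns compact static manifolds with stable cmc boundary and says nothing here, and the Heintze--Karcher/Hawking-mass suggestions are not carried out. The paper avoids the issue altogether for the first statement: it checks via Lemma~\ref{thm:photon} that $\partial M$ is a quasi-local photon surface with $c_S=\tfrac{n}{n-2}$ and then invokes the photon sphere uniqueness theorems of \cite{cederbaum2021photon} and \cite{jahns2019photon}; the identity you cannot establish is exactly what those results encapsulate. A secondary gap in the same part of your argument: in the second statement you need $u_+=\tfrac12(1+V)>0$ on $\partial M$, i.e.\ $V_\ast>-1$, and "$V|_{\partial M}$ is the minimum of the harmonic function $V$" does not give this; the paper obtains the corresponding bound only after the Schwarzschild piece has been attached, by comparing with $\max_S V_m<1$.

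For the second statement, your closing one-sentence "alternative" (cut along $\Sigma$, identify the unbounded piece by static horizon uniqueness, pin down the compact piece by the compact uniqueness result behind the Corollary) is in fact the paper's actual strategy, but as written it is a sketch with missing steps rather than a proof. The paper first identifies the exterior of $\Sigma$ with the Schwarzschild exterior via the static-horizon case of \cite{jahns2019photon,cederbaum2021photon} (this uses that $\Sigma$ is totally geodesic with $\nu(V)=const\neq0$ and yields that $\Sigma$ is a round sphere $\{2m\}\times\mathbb S^2$); it then proves that the inner boundary component $S$ of the compact piece is connected and round by the singular-Einstein/Frankel argument of Theorem~\ref{thm:rigidity}(ii) — a point you skip entirely; and it handles the compact piece not by quoting the Corollary (which requires $\Sigma\subset Int~M$ and two boundary components, so it does not apply before a doubling across $\Sigma$ whose $C^{1,1}$ regularity must be checked), but by attaching $Sch^3_-$ along $S$, doubling across $\Sigma$, conformally compactifying with the factor $\left(\tfrac{\widehat V+1}{2}\right)^4$, and invoking the rigidity case of Bartnik's positive mass theorem \cite{bartnik1986mass} together with Kobayashi's classification \cite{kobayashi1982differential}. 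Without the connectedness of $S$, the regularity of the glued data, and the photon-sphere input discussed above, your proposal does not reach either conclusion.
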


\begin{remark}
Very recently, Raulot proved a similar result, assuming only that the manifold is spin, asymptotically flat, and the boundary is connected (see~\cite[Theorem 6]{raulot2025adm}).
\end{remark}

In this paper, all manifolds are assumed to be orientable.

\subsection{Plan of the paper} The paper is organized as follows. Section~\ref{sec:pre} contains some necessary background information: General properties of static manifolds with boundary, minimal submanifolds and cmc-surfaces and their index, the spectrum of the Laplace operator, and the Steklov spectrum. In Section~\ref{sec:examples} we list some examples of static manifolds with boundary. In Section~\ref{sec:closed} we prove Theorems~\ref{cor:closed1},~\ref{cor:closed2},~\ref{thm:rigidity}, and~\ref{thm:ric}. Here we also prove some corollaries of these theorems and other related propositions. In Section~\ref{sec:fbms} we prove an analog of Theorem~\ref{cor:closed1} for the case where the zero-level set of the potential intersects the boundary. Here we also prove Theorems~\ref{thm:ind} and~\ref{thm:scmc}. Section~\ref{sec:sch} contains the proof of Theorem~\ref{thm:noncomp}. Finally, in the appendix (Section~\ref{appendix}) we consider locally conformally flat static manifolds with boundary.

\subsection{Acknowledgment} This article is an output of a research project implemented as part of the Basic Research Program at the National Research University Higher School of Economics (HSE University). The author thanks the anonymous referee for useful remarks and suggestions. The author thanks Instituto Nacional de Matem\'atica Pura e Aplicada for the hospitality, where part of the research and preparation of this article were conducted. The author is grateful to Lucas Ambrozio for many fruitful discussions and for the invitation to IMPA. 

\section{Preliminaries}\label{sec:pre}

\subsection{General properties of static manifolds with boundary} The following lemma was proved in~\cite[Proposition 1]{cruz2023static}.

 \begin{lemma}\label{lemma} 
 Let  $(M^n,g, V)$ be a static manifold with boundary and  $\Sigma=V^{-1}(0)$ non-empty. Then the following assertions are true.
\begin{itemize}
\item[(a.1)] If $\Sigma$ does not intersect the boundary of $M$, then it is an embedded closed, totally geodesic hypersurface in the interior of $M$, $Int~M$.
\item[(a.2)] If $\Sigma$ intersects the boundary of $M$, then each of its connected components $\Sigma_0$, which has a non-empty intersection with $\partial M$, is an embedded free boundary totally geodesic hypersurface of $M$ and $\partial \Sigma_0$ is a totally geodesic hypersurface of $\partial M$.
\item[(b)] The surface gravity $\kappa:=|\nabla_M V|_g$ is a positive constant on each connected component of $\Sigma$.
\item[(c)] The scalar curvature $R_g$  is constant.
\item[(d)] The boundary $\partial M$ is totally umbilical. Moreover, it has a constant mean curvature $H_g$ on each connected component of $\partial M$ and the mean curvature of $\partial\Sigma_0$ in $\Sigma_0$ (see item (a.2)) is constant on each connected component.
\item[(e)] For any vector $X$ tangent to $\partial M$, where $\nu$ is the unit normal vector field to $\partial M$, one has $\Ric_g(\nu,X)=0$ on $\partial M$.      
\end{itemize}
\end{lemma}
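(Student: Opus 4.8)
The plan is to work throughout with the equivalent first-order system \eqref{static1}--\eqref{static2} and to reduce everything to a single unique-continuation fact about $V$.

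First I would show that $V$ and $\nabla^gV$ cannot vanish simultaneously at a point. Along a unit-speed geodesic $\gamma$ issuing from such a point, $f(t):=V(\gamma(t))$ satisfies, by the first identity in \eqref{static1}, the linear second-order ODE $f''=\bigl(\Ric_g-\tfrac{R_g}{n-1}g\bigr)(\dot\gamma,\dot\gamma)\,f$ with $f(0)=f'(0)=0$, hence $f\equiv 0$; so $V$ vanishes in a (half-)neighborhood of the point, the set $\{V=0\}\cap\{\nabla^gV=0\}$ is open and closed, and $V\equiv 0$ on the connected manifold $M$ --- contradicting nontriviality. (At a boundary point one first uses $\partial V/\partial\nu=\tfrac{H_g}{n-1}V$ from \eqref{static2} to see that $\nabla^gV$ is tangent to $\partial M$ along $\Sigma\cap\partial M$, so that $V\equiv 0$ on a piece of $\partial M$ already forces $\nabla^gV=0$ there.) Consequently $0$ is a regular value of both $V$ and $V|_{\partial M}$, so $\Sigma$ is an embedded hypersurface meeting $\partial M$ transversally, with boundary $\partial\Sigma=\Sigma\cap\partial M$ and $\nabla^gV$ nowhere zero on $\Sigma$.

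The geometric statements (a.1), (a.2), (b) then follow by restricting \eqref{static1} to $\Sigma$: for $X,Y$ tangent to $\Sigma$, $\Hess_gV(X,Y)$ equals $|\nabla^gV|$ times the second fundamental form of $\Sigma$ (relative to $N:=\nabla^gV/|\nabla^gV|$), while \eqref{static1} makes it equal to $V\bigl(\Ric_g-\tfrac{R_g}{n-1}g\bigr)(X,Y)=0$; hence $\Sigma$ is totally geodesic. When $\Sigma$ meets $\partial M$, the relation $\partial V/\partial\nu=\tfrac{H_g}{n-1}V=0$ on $\Sigma\cap\partial M$ forces $N\perp\nu$, i.e. $\nu$ is tangent to $\Sigma$, so $\Sigma_0$ meets $\partial M$ orthogonally (free boundary); restricting the vanishing second fundamental form to $T(\partial\Sigma_0)$ then shows $\partial\Sigma_0$ is totally geodesic in $\partial M$. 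For (b), the tangential derivative $X(|\nabla^gV|^2)=2\Hess_gV(X,\nabla^gV)=2|\nabla^gV|\,V\bigl(\Ric_g-\tfrac{R_g}{n-1}g\bigr)(X,N)$ vanishes on $\Sigma$, so $\kappa=|\nabla^gV|$ is locally constant there, and positive by the previous step. For (c), I would take the divergence of \eqref{eq:static} and use the Bochner-type commutation identity $\nabla^i\nabla_i\nabla_jV=\nabla_j\Delta_gV+(\Ric_g)_{jk}\nabla^kV$ together with the contracted second Bianchi identity $\mydiv\Ric_g=\tfrac12\,dR_g$; all terms cancel except $\tfrac12 V\,dR_g$, and since $V\neq 0$ on the dense set $M\setminus\Sigma$, $R_g$ is constant.

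The main obstacle is (d)--(e), which are coupled. Restricting $\partial V/\partial\nu\,g=V\,B_g$ to $T\partial M$ gives $B_g=(\partial V/\partial\nu)\,V^{-1}g_{\partial M}$ on $\{V\neq 0\}$, which is dense in $\partial M$ (a whole boundary component on which $V\equiv 0$ would, as in the first step, force $V\equiv 0$); by continuity $\partial M$ is totally umbilical, $B_g=\lambda\,g_{\partial M}$ with $\lambda=H_g/(n-1)$. To get constancy of $\lambda$ I would compute $\Hess_gV(X,\nu)$ for $X$ tangent to $\partial M$ in two ways: \eqref{static1} gives $V\,\Ric_g(X,\nu)$, whereas expanding $\nabla^gV=(\partial V/\partial\nu)\nu+\nabla^{\partial M}V$ and using $\partial V/\partial\nu=\lambda V$ and $\nabla^g_X\nu=\lambda X$ gives $V\,X(\lambda)$; hence $\Ric_g(X,\nu)=X(\lambda)$ on $\partial M$. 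Independently, contracting the Codazzi equation for the umbilical boundary gives $\Ric_g(X,\nu)=-(n-2)X(\lambda)$. Comparing yields $(n-1)X(\lambda)=0$, so $H_g$ is constant on each boundary component and $\Ric_g(\nu,X)=0$, which is (e); finally, since $\Sigma_0$ is totally geodesic and orthogonal to $\partial M$, the second fundamental form of $\partial\Sigma_0$ in $\Sigma_0$ equals $-\lambda\,g_{\partial\Sigma_0}$, of constant mean curvature $-(n-2)\lambda$, completing (d).
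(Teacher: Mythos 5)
The paper offers no proof of this lemma at all --- it is imported verbatim from \cite[Proposition 1]{cruz2023static} --- so the only comparison available is with the standard argument, and yours is essentially that argument: unique continuation along geodesics for the simultaneous vanishing of $V$ and $\nabla^g V$, the identity $\Hess_g V=V(\Ric_g-\tfrac{R_g}{n-1}g)$ restricted to $\Sigma$ for (a.1)--(b), divergence of \eqref{eq:static} plus the contracted Bianchi identity for (c), umbilicity from the boundary equation on the dense set $\{V\neq0\}\cap\partial M$, and the contracted Codazzi equation played against $\Hess_gV(X,\nu)=V\Ric_g(X,\nu)$ for (d)--(e). The proof is correct in substance, and the one genuinely delicate point checks out: with the paper's conventions ($\nu$ outward, $B_g(X,Y)=\langle\nabla_X\nu,Y\rangle$, so the unit ball has $H_g=n-1$), the contracted Codazzi identity gives $\Ric_g(X,\nu)=(\mydiv_{\partial M}B_g)(X)-X(H_g)=-(n-2)X(\lambda)$ while your Hessian computation gives $\Ric_g(X,\nu)=X(\lambda)$, whence $(n-1)X(\lambda)=0$; had the sign come out as $+(n-2)X(\lambda)$ the argument would fail exactly at $n=3$, so this verification is essential and worth recording explicitly.

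Two smaller remarks. Your unique-continuation step is complete at interior points, but the parenthetical handling of a boundary point $p$ with $V(p)=0=\nabla^gV(p)$ only states two observations and does not yet give openness of $\{V=0,\ \nabla^gV=0\}$ at $p$: you should first run the same second-order linear ODE argument along geodesics $\sigma$ of $\partial M$ through $p$, using $\nabla_{\dot\sigma}\dot\sigma=-B_g(\dot\sigma,\dot\sigma)\,\nu$ and $\partial V/\partial\nu=\tfrac{H_g}{n-1}V$ to get $(V\circ\sigma)''=a(t)\,(V\circ\sigma)$, concluding $V\equiv0$ (hence $\nabla^gV=0$) on a boundary neighborhood of $p$, and then shoot inward normal geodesics from that boundary patch to fill a one-sided neighborhood of $p$ in $M$. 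Finally, with the convention above the second fundamental form of $\partial\Sigma_0$ in $\Sigma_0$ relative to the outward conormal $\nu$ is $+\lambda\,g$, not $-\lambda\,g$ (so the mean curvature is $(n-2)\lambda$, consistent with the paper's later remark $k_i=H_i/2$ when $n=3$); this is harmless since only constancy is asserted.
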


\subsection{Minimal submanifolds and their index} In the previous section, we saw that the zero-level set of the potential is always totally geodesic. Totally geodesic manifolds have zero mean curvature, i.e., they are minimal. Let us briefly recall the definitions of a minimal submanifold and its index. In this section, we rely on the book~\cite{fraser2020geometric}.

\begin{definition}
Let $\varphi\colon \Sigma^k \looparrowright (M,h)$ be an immersion. It is called minimal if it is a critical point of the volume functional
$$
Vol[\varphi]=\int_\Sigma dv_{\varphi^*h}.
$$
If $\partial\Sigma,~\partial M \neq\O$, then we say that an immersion $\varphi$ is minimal with free boundary if it is critical point of the volume functional under the variations leaving the boundary of $\Sigma$ on the boundary of $M$.
\end{definition}

In the sequel, we always identify $\Sigma$ with its image under the immersion. The Euler-Lagrange equation for minimal submanifolds is $H=0$, where $H$ is the mean curvature vector field of $\Sigma$. In the case where $\partial\Sigma,~\partial M \neq\O$, $\Sigma$ is a free boundary minimal subanifold (FBMS for short) if $H=0$ and $\Sigma \perp \partial M$. 

Minimal submanifolds are not necessary points of local minima of the volume functional. In order to define whether a given minimal submanifold is a point of local minimum, we look at the second variation of the volume functional. In our paper, we are interested only in the case where a minimal submanifold is of codimension one and its normal bundle is trivial. In this case, any normal vector field on $\Sigma$ can be written in the form $\phi\nu$, where $\nu$ is the unit normal vector field for $\Sigma$.  The second variation -- which is defined on the normal vector fields -- takes the following form:
\begin{align}\label{eq:2nd}
S(\phi,\phi)=\int_\Sigma \left(|\nabla^g \phi|^2_g-\left(\Ric_h(\nu,\nu)+|B|^2_g\right)\phi^2\right)\, dv_g- \int_{\partial\Sigma}B_{\partial M}(\nu,\nu)\phi^2\,ds_g.
\end{align}
Here $\Ric_h$ is the Ricci tensor of $(M,h)$, $g$ is the metric on $\Sigma$ induced from $h$, and $B$ is the second fundamental form of $\Sigma$ in $(M,h)$. We use the notation $S(\phi,\phi)$ in place of $S(\phi\nu,\phi\nu)$ just for simplicity. In the case where $\Sigma$ does not have boundary, the second integral vanishes. In the case, when $\Sigma$ is 2-dimensional and $M$ is 3-dimensional, the second variation of the volume can be rewritten as
\begin{align}\label{eq:2nd2}
S(\phi,\phi)=\int_\Sigma\left( |\nabla^g \phi|^2_g +\left(-\frac12 \left(R_h+|B|^2_g\right)+K_g\right)\phi^2\right)\, &dv_g\\\nonumber&-\int_{\partial\Sigma}B_{\partial M}(\nu,\nu)\phi^2\,ds_g,
\end{align}
where $R_h$ is the scalar curvature of $(M,h)$ and $K_g$ is the Gaussian curvature of $\Sigma$ with the induced metric. 

One of the main characteristics of a minimal submanifold is its index.

\begin{definition}
The (Morse) index of a minimal submanifold is defined as
$$
\Ind\Sigma=\max\{\dim W~|~S(\phi,\phi)<0~\forall\phi\in W\subset C^\infty(M)\}.
$$
If $\Ind\Sigma=0$, then we call it stable.
\end{definition}

As we will see in Section~\ref{sec:examples}, in many known examples of static manifolds the zero-level sets of the potential have index one. 

\subsection{Constant mean curvature hypersurfaces and their index} In the previous section, we see that a minimal submanifold has zero mean curvature. There exists a generalization of it, which is called a \textit{constant mean curvature hypersurface} (cmc-hypersurface for short). As follows from the name, the mean curvature for these hypersurfaces is of constant length. There is a variational characterization of constant mean hypersurfaces: Similarly to minimal submanifolds, they are critical points of the volume functional under the variations, locally preserving the volume. Clearly, the second variation of the volume functional takes the same form~\eqref{eq:2nd}. However, the functions $\phi$ must satisfy an additional property: $\int_\Sigma \phi\, dv_g=0$. The index is defined in the same way as for minimal submanifolds but with the previous restriction on functions $\phi$. We will only consider closed cmc-hypersurfaces.

\subsection{Spectral problems} \label{sub:spec} In this section we rely on the book~\cite{levitin2023topics}. For our needs, we only consider the spectrum of the Laplacian on a closed Riemannian manifold $(M,g)$. In this case the spectrum consists of eigenvalues, i.e., the numbers $\lambda$ such that the spectral problem $\Delta_gu=-\lambda u$ admits a non-trivial solution $u\in C^\infty(M)$. Then all eigenvalues have finite multiplicities and form the following sequence:
$$
0=\lambda_0(M)<\lambda_1(M)\leqslant \lambda_2(M) \leqslant \ldots \leqslant \lambda_i(M)\leqslant \ldots \nearrow +\infty.
$$
Consider the $i$-th eigenvalue $\lambda_i(M)$ and a corresponding eigenfunction $u_i$. The zero-level set of $u_i$ splits $M$ into connected components, where the function $u_i$ preserves its sign. These components are called \textit{nodal domains}. The celebrated \textit{Courant nodal theorem} states that the function $u_i$ cannot have more than $i+1$ nodal domains. Hence, a zero eigenfunction has exactly one nodal domain (it is also obvious, since any zero eigenfunction is a constant by the maximum principle) and a first eigenfunction cannot have more than two nodal domains. Moreover, another application of the maximum principle to a second eigenfunction implies that in has \textit{exactly two nodal domains}.

In the case where $M$ is a closed surface, the Yang-Yau inequality holds (see~\cite{yang1980eigenvalues,el1983volume,li1982new})
\begin{align}\label{ineq:YY}
\lambda_1(M)|M|\leqslant 8\pi\left[\dfrac{\gamma+3}{2}\right].
\end{align}
In particular, the functional $g\mapsto\lambda_1(M)|M|$ is bounded from above in the space of Riemannian metrics on $M$, which we denote as $\mathcal R(M)$. Then, one can define the following number
$$
\Lambda_1(M):=\sup_{g\in\mathcal R(M)}\lambda_1(M)|M|.
$$
With this definition, the Yang-Yau inequality reads as 
$$
\Lambda_1(M)\leqslant 8\pi\left[\dfrac{\gamma+3}{2}\right].
$$
Sharpness of this inequality was studied in the paper~\cite{karpukhin2019yang}.

Another spectral problem, which we also use in this paper, is the so-called~\textit{Steklov problem}. Let $(M,g)$ be a compact Riemannian manifold with sufficiently smooth boundary. The Steklov problem is the following problem
$$
\begin{cases}
\Delta_gu=0~&\text{in $M$},\\
\dfrac{\partial u}{\partial \nu}=\sigma u~&\text{on $\partial M$}.
\end{cases}
$$
The number $\sigma$ is called a~\textit{Steklov eigenvalue} if there exists a non-zero function $u$, satisfying the Steklov problem. The set of all such numbers is called the~\textit{Steklov spectrum}. It is discrete and has the following form
$$
0=\sigma_0(M)<\sigma_1(M)\leqslant \sigma_2(M) \leqslant \ldots \leqslant \sigma_i(M)\leqslant \ldots \nearrow +\infty.
$$
Here, $\sigma_i(M)$ is the $i$th Steklov eigenvalue. The multiplicity of each Steklov eigenvalue is finite. 

Suppose now that the boundary of $(M,g)$ is smooth. Xia and Wang in~\cite[Theorem 1.1]{wang2009sharp} found an inequality between the first Steklov eigenvalue of $(M,g)$ with non-negative Ricci curvature and the first eigenvalue of $\partial M$ with the induced metric. It reads 
\begin{align}\label{eq:xw}
\sigma_1(M)\leqslant \frac{\sqrt{\lambda_1(\partial M)}}{(n-1)c}\left(\sqrt{\lambda_1(\partial M)}+\sqrt{\lambda_1(\partial M)-(n-1)c^2}\right),
\end{align}
where $n=\dim M$ and $c>0$ is a lower bound on principal curvatures of $\partial M$ in $(M,g)$. Moreover, equality holds if and only if $(M,g)$ is isometric to an $n$-dimensional Euclidean ball of radius $\dfrac{1}{c}$.

\section{Examples of static manifolds with boundary}\label{sec:examples}

In this section, we consider some examples of static manifolds with boundary.

\medskip

\textit{Example 1.} Any bounded domain with totally geodesic boundary on a Ricci-flat manifold is an example of a compact static manifold with boundary with non-vanishing static potential (see~\cite[Proposition 3.3.]{ho2020deformation}). For example, any strip, bounded by parallel hyperplanes or half-spaces in the Euclidean space with a constant static potential, is a static manifold with boundary. However, these examples are not compact. A half of a flat torus of any dimension with a constant function as a potential is an example of a compact static manifold with boundary. Conversely, if the potential of a static manifold with boundary is constant, then it is Ricci-flat and the boundary is totally geodesic. Moreover, if the dimension of such a static manifold with boundary is 3, then it is flat.

\medskip

\textit{Example 2.} The $n$-ball with the Euclidean metric with static potential, given by $V(x)=x\cdot v$ for some $v\in \mathbb R^n$ is an example of a static manifold with connected boundary. The zero-level set of $V$ is an $(n-1)$-dimensional ball, which is a free boundary totally geodesic submanifold. It is well-known that its index is one.

\medskip

\textit{Example 3.} Another example of a compact static manifold with boundary is a spherical cap (or a spherical $n$-ball): A static potential can be chosen as $x_1$, the first coordinate function, when we consider the cap embedded in the Euclidean space $\mathbb E^{n+1}$ with coordinates $x_1,\ldots, x_{n+1}$ (see~\cite[Theorem 4.3]{sheng2024localized}). The zero-level set of the potential is a geodesic $(n-1)$-sphere on $\mathbb S^n$. It meets the boundary of the cap orthogonally. Its index is one (see~\cite[Theorem 5.4]{medvedev2025free}).

\medskip

\textit{Example 4.} This example is similar to the previous two. We consider a hyperbolic $n$-ball. If the $n$-dimensional hyperbolic space is realized as the hyperboloid model, then the ball is centered at the point $(1,0,\ldots,0)$ in the $(n+1)$-dimensional Minkowski space with coordinates $x_0,x_1,\ldots, x_n$. A static potential can be chosen as $x_1$ (see~\cite[Proposition 4.11]{sheng2024localized}). The zero-level set is a geodesic $(n-1)$-ball, which is orthogonal to the boundary sphere of the ball. Its index is one (see again~\cite[Theorem 5.4]{medvedev2025free}).

\medskip

\textit{Example 5.} One can take a spherical $n$-ring, i.e., a domain on the $n$-sphere, bounded by two concentric spherical caps. It is an example of a compact static manifold with two boundary components (we take $x_1$ as a potential once again). The zero-level set of the potential is a spherical $(n-1)$-ring. Its index is one. A hyperbolic $n$-ring is defined in a similar way. The index of the zero-level set of the potential is also one. 

\medskip

\textit{Preliminary Computation.} This computation generalizes \textit{Example 1} in~\cite{cruz2023static} and computations in Section 4.4 in~\cite{sheng2024localized}. Consider a manifold $M^n=\mathbb S^n\times [r_m,+\infty)$ with the spherically symmetric static metric
$$
g_m=V^{-2}_m(r)dr^2+r^2 g_0,
$$
where $g_0$ is the standard metric on the sphere $\mathbb S^n$ and $r_m$ is the largest zero of $V$. Assume that $V$ is a static potential. Let $V$ solve the equation
\begin{equation*}
\frac{\partial V_m}{\partial \nu}{g}-V_mB_{g} = 0 \quad \text{on}\quad \Sigma_r,
\end{equation*}
where $\Sigma_r=\mathbb S^2\times\{r\},~r\in [r_m,+\infty)$. We want $V_m$ to satisfy~\eqref{static2} 
$$
\frac{\partial V_m}{\partial \nu}=\frac{H_g}{n-1}V_m \quad \text{on}\quad \Sigma_r.
$$ 
It is not hard to see that $\Sigma_r$ is umbilical with mean curvature equal $(n-1)V_m/r$. Then 
\begin{equation}\label{eq:bound}
V_m\frac{\partial V_m}{\partial r}=\frac{V^2_m}{r} \Rightarrow \frac{\partial V_m}{\partial r}=\frac{V_m}{r}.
\end{equation}
Let $V_m(r)=\sqrt{1+\epsilon r^2-\dfrac{2m}{r^{n-2}}}$, where $\epsilon \in\{0,1\}$. The triple $(M^n,g_m,V_m)$ is called the \textit{Schwarzschild} ($Sch^n$) or \textit{anti-de Sitter-Schwarzschild space} ($AdS$-$Sch^n$) for $\epsilon=0,1$, respectively. The solution to equation~\eqref{eq:bound} in these cases yields some number $r_{ps}=\left(nm\right)^{\frac{1}{n-2}}$. The sphere $\Sigma_{r_{ps}}$ is known as a space-like slice of the \textit{photon sphere} $\Sigma_{r_{ps}}\times \mathbb R$ in the corresponding space-times (see definitions in~\cite{claudel2001geometry, perlick2005totally}). For simplicity we call $\Sigma_{r_{ps}}$ just the \textit{photon sphere}. When $\epsilon=-1$, we obtain the \textit{de Sitter-Schwarzschild space} ($dS$-$Sch^n$). For it $r\in (r_-(m),r_+(m))$, where $r_\pm$ are the two positive roots of $V_m$. The boundary sphere $\mathbb S^2\times\{r_-(m)\}$ is called the \textit{black hole horizon} and $\mathbb S^2\times\{r_+(m)\}$ is called the \textit{cosmological horizon}. The solution to equation~\eqref{eq:bound} is also $r_{ps}=\left(nm\right)^{\frac{1}{n-2}}$, provided that $m\in \left(0,\dfrac{1}{n}\left(1-\dfrac{2}{n}\right)^{\frac{2}{n-2}}\right)$. (See~\cite[Example 5]{claudel2001geometry},~\cite[Formula (11)]{virbhadra2002gravitational}, and~\cite[Formula (2.19)]{riojas2023photon} for a justification that $r_{ps}=\left(nm\right)^{\frac{1}{n-2}}$ for $Sch^n$, $AdS$-$Sch^n$, and $dS$-$Sch^n$.)

\begin{remark}
The fact that $\Sigma_{r_{ps}}$ is nothing but the photon sphere is not a coincidence. To the best of our knowledge, this fact was first observed in~\cite[Remark 1]{cruz2023static}. We also explain this phenomenon in Lemma~\ref{thm:photon} below, using the more general notion of the \textit{quasi-local photon surface}. 
\end{remark}

\medskip

\textit{Example 6.} Consider the $n$-dimensional Schwarzschild space $Sch^n$ and the photon sphere in it. This sphere splits the Schwarzschild space into two parts: the compact one and the non-compact one, the one which contains the \textit{end}. The non-compact part is an example of a static manifold with boundary. We denote it $Sch^n_-$. The static potential does not vanish on it. If we take the compact part and perform the \textit{doubling procedure} by reflection across the horizon, we obtain a compact static manifold with boundary (see~\cite[Example 1]{cruz2023static}). The zero-level set of the potential is the horizon. A simple use of formula~\eqref{eq:2nd} shows that it is stable. We can also perform the doubling for the Schwarzschild space. Consider the domain on the doubled Schwarzschild space $\widetilde{Sch^n}$, bounded by the image of the photon sphere, the one that contains $Sch^n$. It is another example of a non-compact static manifold with boundary. It also has one end (the same as the end of $Sch^n$), but the static potential vanishes on it. We denote it $Sch^n_+$. The zero-level set of the potential is the horizon. It is stable. 

\medskip

\textit{Example 7.} One can do the same as we did in the previous example for the anti-de Sitter-Schwarzschild space. Denote the non-compact static manifolds with boundary by $AdS$-$Sch^n_-$ and $AdS$-$Sch^n_+$, respectively. The zero-level set of the potential, which is exactly the horizon, is also stable. The doubling of the de Sitter-Schwarzschild space $\widetilde{dS-Sch^n}$ is compact. Consider the photon sphere and its image after the doubling. They split $\widetilde{dS-Sch^n}$ into two connected parts: the one that contains the cosmological horizon, which we denote as $dS$-$Sch^n_+$, and the one that contains the black hole horizon, which we denote as $dS$-$Sch^n_-$. They are compact static manifolds with boundary (see~Theorem~\ref{thm:sheng}). 

\medskip

\textit{Example 8.} Consider the doubled $n$-dimensional Schwarzschild space $\widetilde{Sch^n}$. In the \textit{isotropic coordinates} it is conformally equivalent to the Euclidean space $\mathbb E^n$ without the origin, which corresponds to the \textit{singularity}. Consider a hyperplane passing through the origin in $\mathbb E^n$. It is not hard to see that this hyperplane is totally geodesic in $\widetilde{Sch^n}$. It splits $\widetilde{Sch^n}$ into two isometric pieces, each of which we denote as $1/2Sch^n$ and call the \textit{hemi-Schwarschild space}. It was shown in~\cite[Proposition 4.24]{sheng2024localized}, that $1/2Sch^n$ with the Schwarzschild static potential is a non-compact static manifold with boundary. The zero-level set of the potential is half of the horizon, which is stable. 

\medskip

\textit{Example 9.} Similarly to the previous example, we define the space $1/2AdS$-$Sch^n$. Consider $dS$-$Sch^n$. It is also conformally equivalent to a domain in $\mathbb E^n$, bounded by two concentric spheres, centered at the origin. A hyperplane passing through the origin is totally geodesic in $dS$-$Sch^n$. Consider the image of this hyperplane after the doubling. The hyperplane and its image split $\widetilde{dS-Sch^n}$ into two isometric pieces, each of which we denote $1/2dS$-$Sch^n$. The spaces $1/2AdS$-$Sch^n$ and $1/2dS$-$Sch^n$ are static manifolds with boundary (see~Theorem~\ref{thm:sheng}).

\medskip

\textit{Example 10.} The last example is the \textit{Nariai manifold with boundary}: Consider the \textit{Nariai solution} $Nar(M^{n-1}):=\left(\mathbb R\times M^{n-1},g=\dfrac{1}{n}dr^2+\dfrac{n-2}{n}g_M,V(r)=\sin(r)\right)$, $n\geqslant 3$. Here $(M^{n-1},g_M)$ is a simply connected Riemannian manifold such that $\Ric_{g_M}=(n-2)g_{M}$. Clearly, $\{r\}\times M$ is totally geodesic for any $r$ and $\nu=\sqrt{n}\dfrac{\partial}{\partial r}$. Hence, on $\left\{\dfrac{\pi}{2}+\pi k\right\}\times M$, with $k\in\mathbb Z$ the second equation in~\eqref{static} is satisfied. Then we obtain compact static manifolds with boundary $Nar_{k,l}(M^{n-1}):=\left(\left[\dfrac{\pi}{2}+\pi k, \dfrac{\pi}{2}+\pi l\right]\times M, g,V(r)=\sin (r)\right)$, where $k<l$ are any two integer numbers. We call it a \textit{compact Nariai manifold with boundary}. We can also obtain \textit{non-compact Nariai static manifolds with boundary}: 
 \begin{align*}
 Nar_{-k}(M^{n-1}):=\left(\left(-\infty, \dfrac{\pi}{2}+\pi k\right]\times M, g,V(r)=\sin (r)\right)~\text{ and}
 \end{align*}
 \begin{align*}
 Nar_{k+}(M^{n-1}):=\left(\left[\dfrac{\pi}{2}+\pi k,+\infty\right)\times M, g,V(r)=\sin (r)\right),~\text{ where }~k\in \mathbb Z.
 \end{align*}
 The zero-level set of the potential, which is possibly disconnected, is stable. Finally, when $M^{n-1}=\mathbb S^{n-1}$, the space $Nar(\mathbb S^{n-1})$ is conformally equivalent to $\mathbb E^n$. Then considering a hyperplane passing through the origin as in \textit{Example 8}, we obtain the space $1/2Nar(\mathbb S^{n-1})$. It is also a static manifold with boundary (see~Theorem~\ref{thm:sheng}).

\section{Compact case: The zero-level set of the potential does not intersect the boundary}\label{sec:closed}

 Let us first focus on the case where $M$ is compact and the zero-level set of the potential $\Sigma$ does not intersect $\partial M$. 
\begin{lemma}\label{lemma:main}
Let $(M^n,g,V)$ be a compact static manifold with boundary with cosmological constant $\Lambda \leqslant 0$. Assume that $V^{-1}(0)=\Sigma\subset Int~M$. Then any connected component of $M\setminus \Sigma$ contains a boundary component of $M$.
\end{lemma}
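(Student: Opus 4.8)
The plan is to argue by contradiction, applying the maximum principle to the elliptic equation satisfied by $V$. Suppose some connected component $\Omega$ of $M\setminus\Sigma$ contains no boundary component of $M$. Since $V$ is continuous, $\Sigma=V^{-1}(0)$ is closed in $M$, so $M\setminus\Sigma$ is open and hence $\Omega$ is open in $M$. By Lemma~\ref{lemma}~$(a.1)$, $\Sigma$ is an embedded closed totally geodesic hypersurface contained in $Int~M$ and disjoint from $\partial M$; it is two-sided, its unit normal being $\nabla^g V/|\nabla^g V|_g$ (recall $|\nabla^g V|_g>0$ on $\Sigma$ by Lemma~\ref{lemma}~$(b)$). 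Cutting $M$ along $\Sigma$ therefore yields compact smooth manifolds with boundary, of which $\overline{\Omega}$ is one; its boundary satisfies $\partial\overline{\Omega}\subseteq\Sigma\cup\partial M$. Since each component of $\partial M$ is connected and disjoint from $\Sigma$, it lies entirely in one component of $M\setminus\Sigma$, so the hypothesis on $\Omega$ forces $\partial\overline{\Omega}\subseteq\Sigma$; and $\partial\overline{\Omega}\neq\emptyset$, for otherwise $\overline{\Omega}$ would be a closed manifold, hence equal to the connected manifold $M$, which does contain a boundary component. In particular $V\equiv 0$ on $\partial\overline{\Omega}$.

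Next I would fix the sign of $V$ on $\Omega$. By definition $V$ does not vanish on $\Omega$, and $\Omega$ is connected, so $V$ has constant sign there; replacing the potential $V$ by $-V$ if necessary (which again solves~\eqref{static}), we may assume $V>0$ on $\Omega$. I then invoke the scalar identity in~\eqref{static1}, namely $\Delta_g V=-\frac{R_g}{n-1}V=-\Lambda V$. Since $\Lambda\leqslant 0$ and $V>0$ on $\Omega$, this gives $\Delta_g V=-\Lambda V\geqslant 0$ on $\Omega$, so $V$ is subharmonic there.

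The contradiction is now immediate from the maximum principle on the compact manifold $\overline{\Omega}$: a subharmonic function attains its maximum on the boundary, whence
\[
0<\sup_{\Omega}V\leqslant\max_{\overline{\Omega}}V=\max_{\partial\overline{\Omega}}V=0,
\]
which is absurd. Hence every connected component of $M\setminus\Sigma$ must contain a boundary component of $M$.

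I do not anticipate a real obstacle: the analytic core is soft, being just the maximum principle for $\Delta_g V=-\Lambda V$ with $\Lambda\leqslant 0$, and this is precisely where the sign assumption on the cosmological constant is used. The only points requiring care are topological --- that $\Sigma$ is a two-sided embedded hypersurface disjoint from $\partial M$, so that $\overline{\Omega}$ is a bona fide compact manifold with $\partial\overline{\Omega}\subseteq\Sigma$, and that a connected boundary component of $M$ is not split among several components of $M\setminus\Sigma$. If one wanted a more quantitative argument, one could instead integrate the equation over $\Omega$, obtaining $\int_{\partial\overline{\Omega}}\partial_\nu V\,ds_g=\int_\Omega\Delta_g V\,dv_g=-\Lambda\int_\Omega V\,dv_g\geqslant 0$ while the Hopf boundary point lemma forces $\partial_\nu V<0$ on $\partial\overline{\Omega}$ (for the outward normal $\nu$); the resulting contradiction is uniform in $\Lambda\leqslant 0$.
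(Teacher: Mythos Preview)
Your proof is correct and follows essentially the same approach as the paper: both argue by contradiction, use that $V$ has a fixed sign on a component $\Omega$ whose closure has boundary contained in $\Sigma$, observe from $\Delta_g V=-\Lambda V$ that $V$ is subharmonic when $\Lambda\leqslant 0$, and conclude via the weak maximum principle. You supply more topological detail (two-sidedness of $\Sigma$, why $\partial\overline{\Omega}\subseteq\Sigma$) than the paper does, and your integral/Hopf alternative at the end is a nice addendum, but the core argument is identical.
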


\begin{proof}
We consider the case where $\Lambda<0$. The case where $\Lambda=0$ is similar. Suppose that there exists a connected component $\Omega$ of $M\setminus \Sigma$, which does not contain a boundary component of $M$. Clearly, $V$ does not change its sign in $\Omega$. Let, for example, $V>0$. Then it follows from equation $\Delta_gV=-\Lambda V$ (see~\eqref{static1}) that $V$ is subharmonic. By the weak maximum principle, $V$ can only achieve its maximum at the boundary of $\Omega$. However, $V=0$ on $\partial\Omega$. We arrive at a contradiction. The case where $V<0$ is similar.
\end{proof}

Observe that if $\Lambda>0$, then it can happen that a connected component $\Omega$ of $M\setminus \Sigma$ contains no boundary component of $M$. In this case $(\Omega,g,V|_\Omega)$ is a \textit{static triple}, i.e., $V|_\Omega\neq0$ and vanishes exactly on $\partial \Omega$. 
\begin{corollary}
Let $(M^n,g,V)$ be a compact static manifold with boundary with non-positive cosmological constant such that $V^{-1}(0)=\Sigma\subset Int~M$. Then the number of connected components of $M\setminus \Sigma$ is not greater than the number of connected components of $\partial M$.
\end{corollary}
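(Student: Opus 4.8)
The plan is to deduce the statement directly from Lemma~\ref{lemma:main}. First I would record the elementary fact that, since $\Sigma\subset Int~M$, the boundary $\partial M$ is disjoint from $\Sigma$; hence every connected component of $\partial M$, being connected and contained in $M\setminus\Sigma$, lies entirely inside a single connected component of $M\setminus\Sigma$. This produces a well-defined map $\Phi$ from the set $\pi_0(\partial M)$ of boundary components to the set $\pi_0(M\setminus\Sigma)$ of connected components of $M\setminus\Sigma$, sending a boundary component to the unique component of $M\setminus\Sigma$ containing it.

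Next I would invoke Lemma~\ref{lemma:main} to show that $\Phi$ is surjective: given any connected component $\Omega$ of $M\setminus\Sigma$, the lemma guarantees that $\Omega$ contains at least one boundary component $S$ of $M$, and by construction $\Phi(S)=\Omega$. Both $\pi_0(\partial M)$ and $\pi_0(M\setminus\Sigma)$ are finite sets, because $M$ is compact and, by Lemma~\ref{lemma}~$(a.1)$, $\Sigma$ is a closed embedded hypersurface in $Int~M$, so its complement has finitely many components. A surjection between finite sets forces the domain to have cardinality at least that of the codomain, and therefore $\#\pi_0(M\setminus\Sigma)\leqslant\#\pi_0(\partial M)$, which is the assertion.

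There is essentially no obstacle here; the only point requiring a moment's care is the direction of the correspondence. One should not try to assign to each boundary component a component of $M\setminus\Sigma$ and hope for injectivity (it need not be injective if several boundary components lie in the same $\Omega$); rather, one exploits that distinct components of $M\setminus\Sigma$ are pairwise disjoint, so they cannot share a boundary component, which is exactly repackaged as the surjectivity of $\Phi$ above. Equivalently, and perhaps most transparently for the write-up, I would phrase the argument as choosing, for each component $\Omega$ of $M\setminus\Sigma$, a boundary component $S_\Omega\subset\Omega$ (possible by Lemma~\ref{lemma:main}); the assignment $\Omega\mapsto S_\Omega$ is then injective since the $\Omega$'s are disjoint, giving the desired inequality immediately.
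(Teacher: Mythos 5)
Your argument is correct and is essentially the paper's own proof: the paper phrases it as a pigeonhole contradiction (if there were more components of $M\setminus\Sigma$ than of $\partial M$, some component would contain no boundary component, contradicting Lemma~\ref{lemma:main}), which is exactly your surjection/injection counting in contrapositive form. No issues.
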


\begin{proof}
Otherwise, by the Dirichlet (pigeonhole) principle, there exists a connected component of $M\setminus \Sigma$, which does not contain a boundary component of $M$. It contradicts Lemma~\ref{lemma:main}.
\end{proof}

Notice that, however, a connected component of $M\setminus \Sigma$ can contain \textit{many} boundary components of $M$.

As a simple corollary of the previous observations, we get Theorems~\ref{cor:closed1} and~\ref{cor:closed2}.

We proceed with the proof of Theorem~\ref{thm:ric}, which is an analog of Proposition 6 in~\cite{ambrozio2017static}. 

\begin{proof}[Proof of Theorem~\ref{thm:ric}]
We prove this theorem for the case where $V>0$. The case where $V<0$, is ruled out by passing to $-V$. 

We use Shen's identity (see~\cite[Formula (12)]{shen1997note}), which reads
\begin{equation}\label{eq:shen}
  \mydiv_g \left(\frac{1}{V}d\left(|\nabla^g V|^2_g+\epsilon V^2\right)\right) = 2V|\mathring{\Ric_g}|^2_g.
\end{equation}
Consider the following vector field
\begin{equation*}
 X=\frac{1}{V}\nabla^g\left(|\nabla^g V|^2_g+\epsilon V^2\right).
\end{equation*} 
Shen's identity implies that 
\begin{align}\label{eq:divX}
\mydiv_g X = 2V|\mathring{\Ric_g}|^2_g \quad \text{on $\Omega$.}
\end{align}
It follows from the definitions of Hessian and the first equation in \eqref{static}, that 
\begin{align}\label{eq:X}
X = 2(\Ric_g(\nabla^g V,\cdot) - 2\epsilon\nabla^g V).
\end{align}
Indeed, for any vector field $Y$ on $\Omega$ by definition of gradient, we have
$$
\left\langle \frac{1}{V}\nabla^g\left(|\nabla^g V|^2_g+\epsilon V^2\right),Y\right\rangle_g = \frac{1}{V}\left(Y(|\nabla^g V|^2_g)+\epsilon Y(V^2)\right).
$$
Differentiating along $Y$ implies
\begin{align*}
\left\langle \frac{1}{V}\nabla^g\left(|\nabla^g V|^2_g+\epsilon V^2\right),Y\right\rangle_g =\frac{2}{V}\langle \nabla_Y\nabla^gV,\nabla^gV\rangle_g+2\epsilon Y(V)\\=2\Hess_gV(\nabla^gV,Y)+2\epsilon Y(V),
\end{align*}
where we used the definition of the Hessian. Using the first equation in \eqref{static} and the fact that $\Delta_gV=-3\epsilon V$ (see the second equation in~ \eqref{static1}), we then get
$$
\left\langle \frac{1}{V}\nabla^g\left(|\nabla^g V|^2+\epsilon V^2\right),Y\right\rangle_g=2V\Ric_g(\nabla^gV,Y)-4\epsilon Y(V),
$$
i.e.,
$$
\langle X,Y\rangle_g=\langle 2(\Ric_g(\nabla^g V,\cdot) - 2\epsilon\nabla^g V),Y\rangle_g
$$
for any vector field $Y$ on $\Omega$, which proves formula~\eqref{eq:X}.

Now we integrate \eqref{eq:shen} against $\Omega$:
\begin{align}\label{eq:begin}
 \int_{\Omega}V|\mathring{\Ric_g}|^2_g\,dv_g = \frac{1}{2}\int_{\Omega}\mydiv_g X \,dv_g =\frac{1}{2}\int_{\partial \Omega} \left\langle X,\nu\right\rangle_g\,ds_g.
\end{align}
Here we used~\eqref{eq:divX} and the divergence theorem. Recall that $\nu$ is the outward unit normal vector field to $\partial\Omega$. Further, using~\eqref{eq:X}, we obtain
\begin{align}\label{eq:nuX}
\left\langle X,\nu\right\rangle_g=2(\Ric_g(\nabla^gV,\nu)-2\epsilon\nu(V)).
\end{align}
 Observe that $\nu$ can be expressed as $-\displaystyle\frac{\nabla^g V}{|\nabla^g V|_g}$ along $\Sigma\cap\partial\Omega$. Hence,
\begin{equation}\label{eq:Xnu}
\left\langle X,\nu\right\rangle_g=2\left(2\epsilon- \Ric_g\left(\frac{\nabla^g V}{|\nabla^g V|_g},\frac{\nabla^g V}{|\nabla^g V|_g}\right)\right)|\nabla^g V|_g \quad \text{on $\Sigma\cap\partial\Omega$}.
 \end{equation}
By the contracted Gauss equation, we have
\begin{equation}\label{eq:cgauss}
 K_{\Sigma_i} =3\epsilon-\Ric_g\left(\frac{\nabla^g V}{|\nabla^g V|_g},\frac{\nabla^g V}{|\nabla^g V|_g}\right)~\text{on a connected component $\Sigma_i$ of $\Sigma \cap \partial\Omega$}.
\end{equation}
Here, $K_{\Sigma_i}$ is the Gauss curvature of $\Sigma_i$ with the induced metric. Plugging \eqref{eq:cgauss} into \eqref{eq:Xnu}, we get
\begin{equation}\label{eq:Xnu2}
\left\langle X,\nu\right\rangle_g= 2(K_{\Sigma_i}-\epsilon)\kappa_i~\text{on a connected component $\Sigma_i$ of $\Sigma \cap \partial\Omega$},
\end{equation}
where we also used the fact that $|\nabla^g V|_g=\kappa_i=const$ on $\Sigma_i$ by item $(b)$ in Lemma~\ref{lemma}. 

On the other hand, on any connected component $S_j$ of $\partial\Omega\cap\partial M$ we have $\nabla^gV=\nabla^{S_j}V+V\nu$, where $\nabla^{S_j}$ is the gradient in the induced metric on $S_j$. Then along $\partial\Omega\cap\partial M$  formula \eqref{eq:nuX} yields
\begin{equation}\label{eq:dXnu}
\left\langle X,\nu\right\rangle_g=2(\Ric_g(\nabla^{S_j}V,\nu)+V\Ric_g(\nu,\nu)-2\epsilon\nu(V))=2(V\Ric_g(\nu,\nu)-2\epsilon\nu(V)),
\end{equation}
since $\Ric_g(X,\nu)=0$ for any $X$ tangent to $S_j$ by item $(e)$ in Lemma~\ref{lemma}. Further, it is easy to see that on $S_j$
\begin{align}\label{eq:comp}
V\Ric_g(\nu,\nu)=-\Delta_{S_j}V-\frac{H^2_{j}}{2}V,
\end{align}  
where $\Delta_{S_j}$ is the Laplacian of the induced on $S_j$ metric. Indeed, we plug the well-known formula
$$
\Delta_gV=\Delta_{S_j}V+H_{S_j}\nu(V)+\Hess_gV(\nu,\nu) \quad \text{on $S_j$}
$$
into the first equation in~\eqref{static}, computed on $(\nu,\nu)$
$$
V\Ric_g(\nu,\nu)=\Hess_gV(\nu,\nu)-\Delta_gV 
$$
which after a simplification yields~\eqref{eq:comp}. Then substituting \eqref{eq:comp} and using the second equation in \eqref{static2} in \eqref{eq:dXnu}, we get
\begin{equation}\label{eq:HV}
\left\langle X,\nu\right\rangle_g=-2\left(\Delta_{S_j}V+\frac{H^2_{j}}{2}V+\epsilon H_jV\right) \quad \text{on $S_j$}.
\end{equation}
Now we return to formula \eqref{eq:begin} and use formulae \eqref{eq:Xnu2} and \eqref{eq:HV}:
\begin{align}\label{eq:almost}
 \int_{\Omega}&V|\mathring{\Ric_g}|^2_g\,dv_g =\frac{1}{2}\int_{\partial \Omega} \left\langle X,\nu\right\rangle_g\,ds_g\\ \nonumber &=\sum_{i=1}^r\int_{\Sigma_i}(K_{\Sigma_i}-\epsilon)\kappa_i\,ds_g+\sum_{j=1}^b\int_{S_j}\left(-\Delta_{S_j}V-\frac{H^2_{j}}{2}V-\epsilon H_jV\right)\,ds_g.
\end{align}
Finally, observing that by the Gauss-Bonnet theorem $\int_{\Sigma_i}K_{\Sigma_i}\,ds_g=2\pi\chi(\Sigma_i)$, by the divergence theorem $\int_{S_j}\Delta_{S_j}V\,ds_g=0$, since $S_j$ is closed, and $H_j=const$ by item $(d)$ in Lemma~\ref{lemma}, we get that formula~\eqref{eq:almost} implies \eqref{eq:tric}.
\end{proof}

Consider some curious corollaries of Theorem~\ref{thm:ric}.

\begin{corollary}\label{cor:tric_1}
Let $(M^3,g,V)$ be a compact static manifold with boundary such that $\Sigma=V^{-1}(0)\subset Int~M$. Assume that the cosmological constant of $(M,g,V)$ is positive. Let $\Sigma_1,\ldots,\Sigma_r$ be the connected components of $\Sigma$ and $H_j$ the mean curvature of the connected component $S_j$ of $\partial M$. Then either one of $\Sigma_i$ is a topological sphere or $H_j<0$ for some $j$.
\end{corollary}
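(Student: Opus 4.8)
The plan is to obtain Corollary~\ref{cor:tric_1} as an immediate consequence of the integral identity~\eqref{eq:tric} in Theorem~\ref{thm:ric}, specialized to $\epsilon=1$ (a positive cosmological constant on a $3$-manifold means $R_g=6$, i.e., $\epsilon=1$). I would argue by contradiction: assume that no connected component of $\Sigma$ is a topological $2$-sphere and that $H_j\geqslant 0$ for every boundary component $S_j$ of $M$, and then derive a contradiction by comparing the signs of the two sides of~\eqref{eq:tric}.

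First I would dispose of the degenerate case $\Sigma=\emptyset$: then $V$ has no zeros, so after replacing $V$ by $-V$ if necessary I may take $V>0$ on $M$, and integrating $\Delta_gV=-3V$ (from~\eqref{static1}) over $M$ together with $\partial V/\partial\nu=\tfrac{H_g}{2}V$ (from~\eqref{static2}) gives $\sum_j\tfrac{H_j}{2}\int_{S_j}V\,ds_g=-3\int_MV\,dv_g<0$, forcing $H_j<0$ for some $j$. So assume $\Sigma\neq\emptyset$ and fix a connected component $\Omega$ of $M\setminus\Sigma$. Since $M$ is connected, $\overline{\Omega}$ meets $\Sigma$, so in the notation of Theorem~\ref{thm:ric} the number $r$ of components $\Sigma_1,\ldots,\Sigma_r$ of $\Sigma\cap\Omega$ is at least $1$. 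As $\Omega$ is connected and avoids $\Sigma=V^{-1}(0)$, $V$ has constant sign on $\Omega$; replacing $V$ by $-V$ if needed (this preserves~\eqref{static} and the zero set), I may assume $V>0$ on $\Omega$, so $sign(V)=+1$. Finally, because $\Sigma\subset Int~M$, $V$ does not vanish on $\partial M$; hence on each component $S_j$ of $\partial M\cap\partial\Omega$ the function $V$ has constant sign, which by continuity from $\Omega$ is positive, so $\int_{S_j}V\,ds_g>0$.

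With this set up, identity~\eqref{eq:tric} for $\Omega$ reads
$$
\int_{\Omega}V|\mathring{\Ric_g}|^2_g\,dv_g+\sum_{j=1}^{b}H_j\left(\frac{H_j}{2}+1\right)\int_{S_j}V\,ds_g=\sum_{i=1}^{r}\kappa_i\left(2\pi\chi(\Sigma_i)-|\Sigma_i|\right).
$$
The left-hand side is $\geqslant 0$: the first integrand is nonnegative since $V>0$ on $\Omega$, and each term of the boundary sum is nonnegative since $H_j\geqslant 0$ (so $H_j(\tfrac{H_j}{2}+1)\geqslant 0$) while $\int_{S_j}V\,ds_g>0$. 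On the right-hand side, $\kappa_i=|\nabla^gV|_g>0$ on $\Sigma_i$ by Lemma~\ref{lemma}(b), $|\Sigma_i|>0$, and $\Sigma_i$ is a closed orientable surface (Lemma~\ref{lemma}(a.1), all manifolds being orientable) that is not a sphere, so $\chi(\Sigma_i)\leqslant 0$; hence $2\pi\chi(\Sigma_i)-|\Sigma_i|<0$ for each $i$, and since $r\geqslant 1$ the right-hand side is strictly negative. This contradicts the displayed identity and proves the corollary.

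The argument is essentially sign bookkeeping on~\eqref{eq:tric}, so I do not expect a genuine obstacle. The only points that need a line of care are handling $\Sigma=\emptyset$ separately and checking that on the chosen component $\Omega$ the potential is positive not merely in the interior but on every adjacent boundary component $S_j$ — which is exactly where the hypothesis $\Sigma\subset Int~M$ is used, ensuring that $V$ cannot change sign along $\partial M$.
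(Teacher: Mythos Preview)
Your proof is correct and follows essentially the same route as the paper: a sign comparison of the two sides of~\eqref{eq:tric} with $\epsilon=1$, under the contradictory assumption that every $\chi(\Sigma_i)\leqslant 0$ and every $H_j\geqslant 0$. Your treatment is in fact more careful than the paper's (you handle $\Sigma=\emptyset$ separately and explain why $V>0$ on each $S_j\subset\partial\Omega$), while the paper simply rescales to $R_g=6$ and observes the sign clash in one sentence.
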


\begin{proof}
Applying a metric rescaling, without loss of generality, one can assume that $R_g=6$. If $\chi(\Sigma_i) \leqslant 0$ for all $i$ and $H_j \geqslant 0$ for all $j$, then in formula~\eqref{eq:tric} with $\epsilon=1$ we see that the sign of the left-hand side is opposite to the sign of the right-hand side or it is zero, while the right-hand side is never zero, which is impossible.
\end{proof}

\begin{corollary}\label{cor:tric_2}
Let $(M^3,g,V)$ be a compact static manifold with boundary with $R_g=6\epsilon$ and such that $\Sigma=V^{-1}(0)\subset Int~M$. Assume that $M\setminus \Sigma$ has exactly two connected components. Then
$$
\int_{M}V|\mathring{\Ric_g}|^2_g\,dv_g=-\sum_{j=1}^bH_j\left(\frac{H_j}{2}+\epsilon\right)\int_{S_j}V\,ds_g,
$$
where $b$ is the total number of connected components $S_j$ of $\partial M$. 

\end{corollary}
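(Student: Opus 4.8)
The plan is to specialize Theorem~\ref{thm:ric} to the situation where $M\setminus\Sigma$ has exactly two connected components $\Omega_1$ and $\Omega_2$, and then add the two resulting identities. First I would apply \eqref{eq:tric} to each of $\Omega_1$ and $\Omega_2$ separately. For $\Omega_1$, the boundary $\partial\Omega_1$ consists of some boundary components $S_j$ of $\partial M$ together with the components of $\Sigma$ that lie in $\overline{\Omega_1}$; since $\Sigma\subset Int~M$ and $M\setminus\Sigma$ has exactly two components, every component of $\Sigma$ is a common boundary piece of $\Omega_1$ and $\Omega_2$, and every boundary component of $M$ lies in exactly one of the two closures. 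Writing $\epsilon$ for the normalized scalar curvature ($R_g=6\epsilon$), the identity for $\Omega_1$ reads
\begin{align*}
\int_{\Omega_1}V|\mathring{\Ric_g}|^2_g\,dv_g+\sum_{S_j\subset\partial\Omega_1}H_j\Big(\frac{H_j}{2}+\epsilon\Big)\int_{S_j}V\,ds_g=\mathrm{sign}(V|_{\Omega_1})\sum_i\kappa_i\big(2\pi\chi(\Sigma_i)-\epsilon|\Sigma_i|\big),
\end{align*}
and similarly for $\Omega_2$.

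The key observation is that $V$ changes sign across $\Sigma$: if $V>0$ on $\Omega_1$ then $V<0$ on $\Omega_2$ (and vice versa), because $\Sigma=V^{-1}(0)$ is the common boundary and $V$ is non-trivial with $\nabla^g V\neq0$ on $\Sigma$ by item $(b)$ of Lemma~\ref{lemma}. Hence $\mathrm{sign}(V|_{\Omega_1})=-\mathrm{sign}(V|_{\Omega_2})$. When I add the two identities, each component $\Sigma_i$ of $\Sigma$ appears once in the sum for $\Omega_1$ and once in the sum for $\Omega_2$ with the same value $\kappa_i$ (the surface gravity is intrinsic to $\Sigma_i$) but with opposite signs; therefore the entire right-hand side cancels. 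On the left, the volume integrals combine to $\int_M V|\mathring{\Ric_g}|^2_g\,dv_g$, and the boundary sums combine to $\sum_{j=1}^b H_j\big(\frac{H_j}{2}+\epsilon\big)\int_{S_j}V\,ds_g$ over all $b$ components of $\partial M$, since the $S_j$ are partitioned between $\partial\Omega_1$ and $\partial\Omega_2$. This yields exactly the claimed identity.

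The only point requiring a little care — and the main (mild) obstacle — is verifying the sign bookkeeping: one must check that the outward unit normal $\nu$ to $\partial\Omega_1$ along $\Sigma_i$ and the outward unit normal to $\partial\Omega_2$ along $\Sigma_i$ are opposite, and that in the derivation of \eqref{eq:Xnu2} the relevant quantity was $\langle X,\nu\rangle_g=2(K_{\Sigma_i}-\epsilon)\kappa_i$ using $\nu=-\nabla^gV/|\nabla^gV|_g$ (for $V>0$ near $\Sigma_i$); for the side where $V<0$, $\nu=+\nabla^gV/|\nabla^gV|_g$, which flips the sign and is precisely recorded by the $\mathrm{sign}(V)$ factor in \eqref{eq:tric}. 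Since $\kappa_i=|\nabla^gV|_g$ on $\Sigma_i$ does not depend on which side one approaches from, the contributions of $\Sigma_i$ to the two identities are genuinely negatives of each other, and the cancellation is exact. Everything else is just collecting terms, so no further difficulty is expected.
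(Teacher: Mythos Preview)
Your proposal is correct and follows exactly the paper's approach: apply formula~\eqref{eq:tric} to each of the two connected components of $M\setminus\Sigma$ and sum, using that the $\mathrm{sign}(V)$ factors on the $\Sigma_i$-contributions are opposite so those terms cancel. The paper's own proof is a one-line remark to this effect; your additional sign bookkeeping is sound and simply makes the cancellation explicit.
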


\begin{proof}
The first formula follows immediately from formula~\eqref{eq:tric} on the connected components of $M\setminus \Sigma$ (after summation).  
\end{proof}

Finally, we observe that an analog of Proposition 15 in~\cite{ambrozio2017static} also holds true in the following sense.

\begin{proposition} 
 Let $(M^3,g,V)$ be a compact static manifold with boundary and $\Sigma=V^{-1}(0)\subset Int~M$. Then the inclusion map $i\colon\Sigma \to M$ induces an injective map $i_{*}\colon \pi_{1}(\Sigma) \rightarrow \pi_{1}(M)$ between the fundamental groups. In particular, if $M$ is simply connected, then $\Sigma$ is a collection of spheres.
\end{proposition}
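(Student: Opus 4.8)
The \emph{``in particular''} statement is immediate from injectivity: if $\pi_1(M)=1$, then injectivity of $i_*$ forces $\pi_1(\Sigma_0)=1$ for every connected component $\Sigma_0$ of $\Sigma$, and a closed orientable surface with trivial fundamental group is a two-sphere. So the whole content is the injectivity of $i_*$, which I would prove component by component: fix a connected component $\Sigma_0$ of $\Sigma$ (by Lemma~\ref{lemma}~$(a.1)$ it is a closed, embedded, two-sided, totally geodesic surface in $Int~M$) and show $\pi_1(\Sigma_0)\to\pi_1(M)$ is injective. If $\Sigma_0$ is a sphere there is nothing to prove, so assume its genus is positive. The argument is the analog of \cite[Prop. 15]{ambrozio2017static} and follows the Schoen--Yau/Meeks--Yau incompressibility technique, with the static equation playing the role of stability.

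Suppose $\pi_1(\Sigma_0)\to\pi_1(M)$ has nontrivial kernel. Cutting $M$ along $\Sigma_0$ and using van Kampen's theorem, the kernel survives on (at least) one of the two sides of $\Sigma_0$; applying the Loop Theorem on that side produces an embedded disk $D\subset M$ with $D\cap\Sigma=\partial D=:\gamma$, with $\gamma$ an embedded essential simple closed curve in $\Sigma_0$, and with $Int~D$ contained in a single connected component $\Omega$ of $M\setminus\Sigma$. Since $V$ never vanishes on $\Omega$ and $\Omega$ is connected, $V$ has a fixed sign there, and after replacing $V$ by $-V$ we may assume $V>0$ on $\Omega$. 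Next I would solve the Plateau problem for $\gamma$ inside $\overline{\Omega}$: the relevant parts of $\partial\overline{\Omega}$ are totally geodesic pieces of $\Sigma$ and, where present, the totally umbilic boundary $\partial M$, so they act as barriers, yielding an area-minimizing disk $\hat D$ with $\partial\hat D=\gamma$, which by Meeks--Yau can be taken embedded (there is an embedded competitor, namely $D$). As $\gamma$ is essential in $\Sigma_0$, $\hat D$ is not a subsurface of $\Sigma_0$; since $\hat D$ is minimal and $\Sigma$ is totally geodesic, the strong maximum principle forces $\hat D\cap\Sigma=\gamma$, so $Int~\hat D\subset\Omega$, $V>0$ on $Int~\hat D$, and $V|_\gamma=0$.

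Now I would bring in the static structure. Restricting $V$ to the minimal disk $\hat D$ and using $H_{\hat D}=0$, $\Delta_gV=-3\epsilon V$, $\Hess_gV(\nu,\nu)=V(\Ric_g(\nu,\nu)-3\epsilon)$ from~\eqref{static1}, together with the contracted Gauss equation and $R_g=6\epsilon$ (which give $\Ric_g(\nu,\nu)=3\epsilon-K_{\hat D}-\tfrac12|A|^2$), one computes
\[
\Delta_{\hat D}(V|_{\hat D})=-V\,\Ric_g(\nu,\nu)=V\!\left(K_{\hat D}-3\epsilon+\tfrac12|A|^2\right),
\]
where $A$ is the trace-free second fundamental form of $\hat D$. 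Equivalently, $L(V|_{\hat D})=-|A|^2\,V$, where $L=-\Delta_{\hat D}+\big(K_{\hat D}-3\epsilon-\tfrac12|A|^2\big)$ is exactly the Jacobi (stability) operator of $\hat D$ for variations fixing $\gamma$, by~\eqref{eq:2nd2}. Since $V|_{\hat D}$ is a nonnegative admissible test function vanishing on $\gamma$, testing the Rayleigh quotient with $V$ gives $\lambda_1^{\mathrm{Dir}}(L)\le 0$, while area-minimality of $\hat D$ gives $\lambda_1^{\mathrm{Dir}}(L)\ge 0$. Hence $\lambda_1^{\mathrm{Dir}}(L)=0$ with first eigenfunction $V|_{\hat D}$, so $L(V|_{\hat D})=0$, which forces $|A|\equiv 0$: the disk $\hat D$ is totally geodesic and carries the strictly positive Jacobi field $V|_{\hat D}$, vanishing on its boundary.

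Finally, as in \cite[Prop. 15]{ambrozio2017static}, this configuration is impossible. The positive Jacobi field spans the one-dimensional kernel of the fixed-boundary Jacobi operator of $\hat D$, so $\hat D$ is a degenerate area-minimizer which can be deformed through a local minimal foliation near $\hat D$ (using the sign of $V|_{\hat D}$); a nearby leaf on the $V>0$ side is then a minimal disk with boundary near $\gamma$ that meets the totally geodesic $\Sigma$ tangentially or crosses it, contradicting the maximum principle for minimal surfaces. This contradiction establishes that $i_*$ is injective. The main obstacles are the two analytic steps: making rigorous the Plateau/embeddedness construction and, in particular, keeping $\hat D$ away from $\partial M$ (where the umbilicity in Lemma~\ref{lemma}~$(d)$ and the sign information in~\eqref{static2} are needed), and extracting the final ``degenerate minimizer $\Rightarrow$ foliation $\Rightarrow$ maximum principle'' contradiction carefully from the positivity of $V|_{\hat D}$, exactly as in Ambrozio's argument.
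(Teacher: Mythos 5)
Your overall strategy (loop theorem, a least--area compressing disk in the closure of a component $\Omega$ of $M\setminus\Sigma$, then the static potential as a Dirichlet test function in the stability inequality, giving $|A|\equiv 0$ and a degenerate minimizer) is exactly the kind of argument the paper has in mind: the paper gives no details at all and simply states that the proof is the same as that of Proposition 15 in \cite{ambrozio2017static}, so your computation $\Delta_{\hat D}(V|_{\hat D})=-V\Ric_g(\nu,\nu)$ and the conclusion $Q(V,V)=-\int|A|^2V^2\leqslant 0$ are in the right spirit and are correct as computations. However, as a proof your write-up has two genuine gaps, both of which you flag but neither of which you close. First, the assertion that the ``totally umbilic boundary $\partial M$'' acts as a barrier for the Plateau problem in $\overline{\Omega}$ is unjustified and false in general: umbilicity carries no sign information, the proposition makes no assumption on $H_g$, and there are examples within its scope (e.g.\ $dS$-$Sch^3_+$, whose boundary consists of photon spheres with $H_g<0$ with respect to the outward normal) where $\partial M$ is not mean-convex. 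Without mean convexity the Meeks--Yau minimizer in $\overline\Omega$ may develop contact with $\partial M$, where it is not a classical minimal surface and the interior second-variation argument with the test function $V$ no longer applies; this is precisely the point where the present setting differs from Ambrozio's (there $\partial M=\Sigma=V^{-1}(0)$ is totally geodesic, so the barrier issue does not arise), and it is the step that actually needs an argument.

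Second, your endgame is not cogent as stated. From $\lambda_1^{\mathrm{Dir}}=0$ with eigenfunction $V|_{\hat D}$ you only recover $L(V|_{\hat D})=0$ and $A\equiv 0$, which you already had; the contradiction must come from an additional mechanism. The ``local minimal foliation'' you invoke is problematic because the Jacobi field $V|_{\hat D}$ vanishes on $\partial\hat D$, so the implicit-function-theorem deformation it generates fixes the boundary curve $\gamma\subset\Sigma$ (it does not sweep out leaves with freely moving boundary), and the asserted contradiction that a nearby leaf ``meets $\Sigma$ tangentially or crosses it'' does not follow: transverse crossing of a totally geodesic surface by a minimal disk violates no maximum principle, and tangential touching is not forced by your construction. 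So the final step ``degenerate minimizer $\Rightarrow$ foliation $\Rightarrow$ maximum principle'' is a sketch of an idea rather than a proof, and it is exactly the part that must be extracted carefully (or replaced, e.g.\ by an exchange-and-round/area comparison argument using that $\gamma$ is essential in $\Sigma_0$) for the proposition to be established.
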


The proof of this proposition is the same as the proof of Proposition 15 in~\cite{ambrozio2017static}. 

We conclude this section with the proof of Theorem~\ref{thm:rigidity}.

\begin{proof}[Proof of Theorem~\ref{thm:rigidity}]
Consider $M\setminus \Sigma$. Since $\Sigma \subset Int(M)$ is connected, $M\setminus \Sigma$ has exactly two boundary components. Let $\Omega$ denote a connected component of $M\setminus \Sigma$, containing a boundary component of $M$. By Lemma~\ref{lemma:main}, if $\epsilon=0$, then any connected component of $M\setminus \Sigma$ contains a boundary component of $M$. But it is not necessarily the case if $\epsilon=1$. 

Let $\cup_{i=0}^kS_i=\partial\Omega\setminus \Sigma$. We want to show that $k=0$. For this aim we use the construction of the \textit{singular Einstein manifold associated to $\Omega$} (see~\cite[Section 6]{ambrozio2017static}) and apply the \textit{Frankel argument} (see~\cite{frankel1961manifolds,frankel1966fundamental,fraser2014compactness}). 

Let us recall the construction of the singular Einstein manifolds. We refer the interested readers to~\cite[Section 6]{ambrozio2017static}, where they can find the references for the statements that we mention in this paragraph. Let $\Omega$ be a connected component of $M\setminus \Sigma$. Let $\partial \Omega=\Sigma\cup \left (\cup_{i=0}^kS_i\right)$, where each $S_i$ is a connected hypersurface. Let $U$ be a sufficiently small tubular neighbourhood of $\Sigma$ diffeomorphic to $[0,1)\times \Sigma$. The singular Einstein manifold with boundary $N^{n+1}$ associated to $\Omega$ is defined as the quotient space $N^{n+1}=\mathbb S^1\times (\Omega\setminus\Sigma)\,\sqcup\,(\mathbb B^2_1\times \Sigma)/\sim,$
where $\mathbb S^1$ and $\mathbb B^2_1$  are the unit circle and the open unit ball centered at the origin of $\mathbb{R}^2$, respectively, and $\sim$ is the equivalence relation, which identifies $(\theta,p)\in \mathbb S^1\times (U\setminus\Sigma)$ with $(r\cos\theta, r\sin \theta, x)\in \mathbb B^2_1\times \Sigma$ if $p=(r,x)$. Notice that $\partial N=\mathbb S^1\times (\partial\Omega\setminus \Sigma)$. Since $V$ does not vanish on $\Omega\setminus \Sigma$, we can define the Riemannian metric $h=V^2d\theta^2+g$ on $N^{n+1}\setminus \Sigma$. This Riemannian metric is singular along $\Sigma$. In fact, it has, up to perturbation, a conical behavior with angle $k=|\nabla_{\Sigma}V|>0$ (it is a constant by $(d)$ in Lemma~\ref{lemma}) in the directions transverse to $\Sigma$. Moreover, $h$ is indeed Einstein: Its Ricci curvature satisfies $\Ric_h=\epsilon nh$, if the scalar curvature of $g$ on $\Omega$ is $\epsilon n(n-1)$. Finally, rescaling $V$ is necessary, we can always assume that $|\nabla_\Sigma V|=1$. Then the metric $h$ extends smoothly on $\mathbb S^1\times \Sigma$. 

Further, it is not hard to see that the boundary components of $N$, which are diffeomorphic to $\mathbb S^1\times S_i$, are totally umbilical with principal curvature $\dfrac{H_{S_i}}{n-1}$ (see~\cite[Section 4]{cruz2023static}). Then the mean curvature of each boundary component of $N$ is equal to $\dfrac{n}{n-1}H_{S_i}$. Suppose that $N$ has at least two connected boundary components $\partial_1N$ and $\partial_2N$. Then there exists a minimizing geodesic $\gamma$ that realizes the distance between $\partial_1N$ and $\partial_2N$. Assume that $\gamma$ is parametrized by arc length $t\in [0,l]$, where $l$ is the distance between  $\partial_1N$ and $\partial_2N$. Let $p=\gamma(0)\in \partial_1N$. Notice that $\gamma$ meets both $\partial_1N$ and $\partial_2N$ orthogonally (otherwise, $\gamma$ could be shorten). Consider the orthonormal basis $\{\dot\gamma, e_1\ldots,e_n\}$, where $\{e_1,\ldots,e_n\}$ is an orthonormal basis in $T_p\partial_1N$. Abusing notations, let $\dot\gamma, e_1,\ldots,e_n$ denote the fields obtained by the parallel transport of this basis along $\gamma$. Since $\gamma$ is minimizing, the second variation of the length of $\gamma$ along $e_i$, which is denoted as $\delta^2L(e_i,e_i)$, is non-negative for any $i=1,\ldots,n$. Then by Synge's formula (see e.g.~\cite[p.70]{frankel1966fundamental})
\begin{align}\label{synge}
0\leqslant \sum_{i=1}^n\delta^2L(e_i,e_i)=&-\int_0^l\Ric_h(\dot\gamma,\dot\gamma)dt-H_{\partial_1N}-H_{\partial_2N}\\ \nonumber&=-\epsilon nl-\dfrac{n}{n-1}H_{S_1}-\dfrac{n}{n-1}H_{S_2},
\end{align}
since $\Ric_h=\epsilon nh$. Then the right-hand side of~\eqref{synge} is negative  in the case $(i)$, when $\epsilon=1$ and $H_{S_i}\geqslant 0$. We arrive at a contradiction. Then $\partial N$ is connected and, hence, $\Omega$ contains a unique boundary component of $M$. If the remaining connected component of $M\setminus \Sigma$ does not contain boundary components of $M$, then we conclude that $M$ has one boundary component. Otherwise, the same arguments as above, imply that the remaining boundary component also contain exactly one boundary component of $M$. Then $M$ has two boundary components. 

In the case $(ii)$, when $\epsilon=0,~H_{S_i}>0$, we similarly obtain that the right-hand side of~\eqref{synge} is negative and we conclude that $\Omega$ contains a unique boundary component $S$ of $M$. Unlike the case $(i)$, we have that both connected components of $M\setminus \Sigma$ contain one boundary component of $M$. Hence, $M$ has two boundary components. 
  
Pass to the second part of the theorem. As we have seen above in~\eqref{eq:comp}, the following formula holds
\begin{equation}\label{eq1}
\Delta_SV=-\left(\Ric_g(\nu,\nu)+\dfrac{H_S^2}{n-1}\right)V,
\end{equation}
where $S$ is a connected component of $\partial M$. But $V=const$ on $S$ by assumption. Hence, $\Ric_g(\nu,\nu)+\dfrac{H_S^2}{n-1}=0$ and by the contracted Gauss equation 
\begin{align}\label{eq:scal}
R_S=R_g+\dfrac{n}{n-1}H_S^2>0
\end{align}
in both cases $(i)$ and $(ii)$. 

Finally, when $n=3$, then formula~\eqref{eq:scal} implies that the Gauss curvature of $S$ is a positive constant. Then by the Gauss-Bonnet theorem, $S$ is a sphere of constant Gauss curvature. Then Hopf's theorem (see~\cite[Section 3]{hopf2001clifford}) implies that it is a round sphere.
\end{proof}

\section{Compact case: The zero-level set of the potential intersects the boundary}\label{sec:fbms}

Now we move to the case where $M$ is compact and the zero-level set of the potential $\Sigma$ has a non-empty intersection with $\partial M$. The following proposition can be considered as an analog of Theorem~\ref{cor:closed1} for the case where $\Sigma$ intersects the boundary of $M$.

\begin{theorem}\label{thm:bd}
There are no static manifolds with boundary with non-positive cosmological constant $(M^n,g,V)$, satisfying the following properties: 
\begin{itemize}
\item $\Sigma=V^{-1}(0)$ contains a connected component $\Sigma'$, which intersects one connected component $S$ of $\partial M$ and nothing else;\\
\item $R_{S}\geqslant R_g+\displaystyle\frac{n}{n-1}H^2_S$;\\
\item the domain, bounded by $\Sigma'$ and $S$ is compact and contains no other components of $\Sigma$ (i.e., $V$ does not change its sign on this domain)
\end{itemize}
Particularly, if $M$ is compact and $\Sigma$ is connected and intersects a boundary component $S$ of $M$ with $R_{S}\geqslant R_g+\displaystyle\frac{n}{n-1}H^2_S$, then it necessarily intersects another boundary component of $M$, which does not satisfy this pinching condition. In this case $\Sigma$ has more than one boundary component.
\end{theorem}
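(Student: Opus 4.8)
The plan is to force a contradiction with the maximum principle by looking at the potential $V$ restricted to the boundary component $S$. The starting point is a pointwise identity valid on an arbitrary connected component $S$ of $\partial M$: combining the decomposition $\Delta_g V=\Delta_S V+H_S\,\nu(V)+\Hess_g V(\nu,\nu)$ with the static equations and the boundary condition $\nu(V)=\tfrac{H_S}{n-1}V$ gives, exactly as in~\eqref{eq1},
\[
\Delta_S V=-\Bigl(\Ric_g(\nu,\nu)+\tfrac{H_S^2}{n-1}\Bigr)V\qquad\text{on }S .
\]
Since $\partial M$ is totally umbilical with constant mean curvature $H_S$ (Lemma~\ref{lemma}, items (c), (d)), the contracted Gauss equation (already used around~\eqref{eq:cgauss}--\eqref{eq:scal}) rewrites the coefficient as $\Ric_g(\nu,\nu)+\tfrac{H_S^2}{n-1}=\tfrac12\bigl(R_g+\tfrac{n}{n-1}H_S^2-R_S\bigr)$, so that
\[
\Delta_S V=\tfrac12\Bigl(R_S-R_g-\tfrac{n}{n-1}H_S^2\Bigr)V\qquad\text{on }S .
\]

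Next I would set up the relevant piece of $S$. Let $\Omega$ be the compact domain bounded by $\Sigma'$ and $S$. By Lemma~\ref{lemma}(a.2), $\Sigma'$ is an embedded free boundary totally geodesic hypersurface and $\partial\Sigma'\subset S$ is a totally geodesic hypersurface of $S$, so $\partial\Omega=\Sigma'\cup\overline{S'}$, where $\overline{S'}$ is the portion of $S$ lying in $\partial\Omega$, a compact manifold whose boundary is $\partial\Sigma'$ (or, in the borderline case where $\partial\Sigma'$ does not separate $S$, $\overline{S'}=S$ is closed). Because $V$ does not change sign on $\Omega$ and $\Omega$ contains no component of $\Sigma$ other than $\Sigma'$, we get $V\neq 0$ on $\Omega\setminus\Sigma'$; replacing $V$ by $-V$ if necessary, $V>0$ on the interior $S'$ while $V=0$ on $\partial\Sigma'$. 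The pinching hypothesis $R_S\geqslant R_g+\tfrac{n}{n-1}H_S^2$ together with $V>0$ now makes $\Delta_{S'}V\geqslant 0$, i.e.\ $V$ is a nonnegative subharmonic function on the compact manifold $\overline{S'}$ vanishing on $\partial\overline{S'}$. Its maximum is positive and attained in the interior, so by the strong maximum principle $V$ is constant, hence $V\equiv 0$ on $\overline{S'}$, contradicting $V>0$ on $S'$. (If $\overline{S'}=S$ is closed, subharmonicity already forces $V$ constant, and it vanishes on $\partial\Sigma'\neq\varnothing$, giving the same contradiction.) This proves the first assertion.

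For the ``particularly'' statement I would argue as follows. Assume $M$ compact, $\Sigma$ connected, and $\Sigma\cap S\neq\varnothing$ with $R_S\geqslant R_g+\tfrac{n}{n-1}H_S^2$. First $\Sigma$ separates $M$: otherwise $M\setminus\Sigma$ is connected and $V|_{M\setminus\Sigma}$ has constant sign, impossible since $|\nabla^g V|_g=\kappa>0$ on $\Sigma$ (Lemma~\ref{lemma}(b)) forces $V$ to change sign across $\Sigma$. If $\Sigma$ met $S$ and no other component of $\partial M$, the component of $M\setminus\Sigma$ adjacent to $S$ would realize precisely the forbidden configuration above; hence $\Sigma$ meets a second component $S''$ of $\partial M$, so $\partial\Sigma$ has more than one component. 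Running the same computation on the piece of $S''$ on either side of $\Sigma$ (again a compact manifold whose boundary lies in $V^{-1}(0)$) then shows $S''$ cannot satisfy $R_{S''}\geqslant R_g+\tfrac{n}{n-1}H_{S''}^2$, which is the last claim. I expect the only genuinely delicate point to be the geometric/topological bookkeeping: justifying that ``the domain bounded by $\Sigma'$ and $S$'' really has boundary of the form $\Sigma'\cup\overline{S'}$ with $\overline{S'}$ a compact manifold and $\partial\overline{S'}\subset V^{-1}(0)$, and handling the borderline case where $\partial\Sigma'$ is non-separating so that $\overline{S'}$ is closed. The boundary Laplacian identity and the maximum-principle step are routine; in particular the identity, derived on a full boundary component, transfers unchanged to the open subset $S'$ since $R_S$ and $H_S$ are unaffected by restriction.
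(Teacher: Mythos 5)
Your proposal is correct and follows essentially the same route as the paper: the identity $\Delta_S V=-\bigl(\Ric_g(\nu,\nu)+\tfrac{H_S^2}{n-1}\bigr)V$ rewritten via the contracted Gauss equation, followed by the maximum principle applied to $V$ restricted to the portion of $S$ in $\partial\Omega$, which vanishes on $\partial\Sigma'$. The only deviation is that you bypass the paper's preliminary step (subharmonicity of $V$ in the bulk $\Omega$, which is where the non-positive cosmological constant is used to locate the maximum on $\partial\Omega\cap S$) by extracting the sign of $V$ on that boundary piece directly from the no-sign-change hypothesis, which is a harmless shortcut.
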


\begin{proof}
Let $\Omega$ be the (Lipschitz) domain, bounded by $\Sigma'$ and $S$. Without loss of generality, we can assume that $V>0$ on $\Omega$. Since $(M^n,g,V)$ has non-positive cosmological constant, then $V$ is subharmonic. By the weak maximum principle, $V$ can only achieve its maximum in $\Sigma'$ or in $\partial\Omega\cap S$. $V$ cannot achieve its maximum on $\Sigma'$ since $V=0$ on it. Then it achieves its maximum at an interior point of $\partial\Omega\cap S$. Consider the restriction of $V$ on $\partial\Omega\cap S$. It follows from~\eqref{eq:comp} that $V$ satisfies the equation 
$$
\Delta_SV=-\left(\Ric_g(\nu,\nu)+\dfrac{H_S^2}{n-1}\right)V=\frac12\left(R_{S}- R_g-\displaystyle\frac{n}{n-1}H^2_S\right)V,
$$
where we applied the contracted Gauss equation. Then $V$ is subharmonic in $\partial\Omega\cap S$. Again, by the weak maximum principle, $V$ can only achieve its maximum at the boundary of $\partial\Omega\cap S$, which is impossible since $V=0$ on it. A contradiction.
\end{proof}

\begin{remark}
For example, if $R_g=0$ and the boundary of $M$ satisfies $H_S=0,~R_S\geqslant 0$ (as in \textit{Example 8}), then $\Sigma$ and $S$ cannot bound a bounded domain. 
\end{remark}

Recall that according to Lemma~\ref{lemma}, $\Sigma$ is a free boundary totally geodesic hypersurface and the mean curvature of $\partial\Sigma$ in $\Sigma$ is locally constant. Moreover, in the case where $n=3,~R_g=0$ and $H_g=2$, the geodesic curvature of $\partial\Sigma$ in $\Sigma$ is equal to 1. Cruz and Nunes proved in~\cite[Theorem 2]{cruz2023static}, that in the above setting if we additionally assume that $\Sigma$ is connected, then $\Sigma$ is a free boundary totally geodesic 2-disk with area $|\Sigma|\leqslant \pi$. The equality case implies that $\Sigma$ is a flat unit disk and $(M,g)$ is the Euclidean unit ball $(\mathbb B^3,\delta)$. As we saw in \textit{Example 2}, the index of a flat disk in $(\mathbb B^3,\delta)$ is one. We pass to the proof of Theorem~\ref{thm:ind}, which characterizes free boundary zero-level sets of index one. 

\begin{proof}[Proof of Theorem~\ref{thm:ind}]
The second variation of volume of $\Sigma$ along the vector field $\phi\nu$ is given by formula~\eqref{eq:2nd2}, which in our case takes the following form
$$
S(\phi,\phi)=\int_\Sigma |\nabla^g \phi|^2_g\,dv_g-\frac12\int_\Sigma R_g\phi^2\,dv_g+\int_\Sigma K_\Sigma\phi^2\, dv_g-\int_{\partial\Sigma}B_{\partial M}(\nu,\nu)\phi^2\,ds_g.
$$
Here we used that $\Sigma$ is totally geodesic (see Lemma~\ref{lemma} $(a.2)$), i.e., $B=0$. Recall that by Lemma~\ref{lemma} $(d)$ $\partial M$ is umbilical, i.e., $B_{\partial M}(\nu,\nu)=\dfrac{H_i}{2}$ along $\partial_i\Sigma$. Since the index of $\Sigma$ equals one, there exists a function $\phi\in C^\infty(\Sigma)$, which can be chosen positive, such that $S(\phi,\phi)<0$. It is well-known (see~\cite{gabard2006representation}), that $\Sigma$ admits a proper branched conformal covering $u$ over $(\mathbb B^2,\delta)$ of degree $\deg(u) \leqslant \gamma+b$. Let $x_1,x_2$ be coordinate functions on $\mathbb B^2$ and $u_j=u\circ x_j,~j=1,2$. In order to prove inequality~\eqref{eq:mineq} we use the \textit{Hersch trick}. It implies that $u_1,u_2$ can be chosen $L^2(\partial\Sigma,ds_g)$-orthogonal to $\phi$. Then
$$
S(u_j,u_j)=\int_\Sigma |\nabla^g u_j|^2_g\,dv_g-\frac12\int_\Sigma R_gu_j^2\,dv_g+\int_\Sigma K_\Sigma u_j^2\, dv_g-\int_{\partial\Sigma}\frac{H_g}{2}u_j^2\,ds_g\geqslant 0,~j=1,2.
$$
Summing these two inequalities, we get
\begin{equation*}\label{ineq:ax}
\int_\Sigma |\nabla^g u|^2_g\,dv_g-\frac12\int_\Sigma R_g\sum_ju_j^2\,dv_g+\int_\Sigma K_\Sigma \sum_ju_j^2\, dv_g-\int_{\partial\Sigma}\frac{H_g}{2}\,ds_g\geqslant 0,
\end{equation*}
where we have used that $\sum_ju_j^2=1$ on $\partial\Sigma$. In a standard way, we observe that $\int_\Sigma |\nabla^g u|^2_g\,dv_g=2\pi \deg(u)\leqslant 2\pi(\gamma+b)$. Set $\int_\Sigma\sum_ju_j^2\, dv_g=C>0$ and observe that $\int_\Sigma K_\Sigma \sum_ju_j^2\, dv_g\leqslant C\max_\Sigma K_\Sigma$. Then we get~\eqref{eq:mineq}. 

In the case, when $R_g=0,~H_g=2$ inequality~\eqref{eq:mineq} reads
$$
|\partial\Sigma| \leqslant 2\pi(\gamma+b)+C\left(\max_\Sigma K_\Sigma\right).
$$
In the equality case we obtain that $K_\Sigma=const$. Then by the Gauss-Bonnet theorem, we get
$$
K_\Sigma|\Sigma|+|\partial\Sigma|=2\pi\chi(\Sigma),
$$
since the geodesic curvature of $\partial\Sigma$ in $\Sigma$ equals 1. Then we have
$$
2\pi\chi(\Sigma)-K_\Sigma|\Sigma|=2\pi(\gamma+b)+CK_\Sigma|\Sigma|.
$$
Moreover, by~\cite[Theorem 2]{cruz2023static}, $\Sigma$ is diffeomorphic to a disk. Then $\gamma=0,~b=1$, which implies
$$
-K_\Sigma|\Sigma|=CK_\Sigma|\Sigma|.
$$
This is possible if, and only if, $K_\Sigma=0$, i.e., $\Sigma$ is the flat disk and the geodesic curvature of the boundary is 1. Then $\Sigma$ is isometric to the flat unit disk. The area of $\Sigma$ is equal to $\pi$. By the rigidity case in~\cite[Theorem 2]{cruz2023static}, $(M,g)$ is isometric to $(\mathbb B^3,\delta)$ and $V$ is given by $V(x) =x\cdot v$ for some vector $v \in \mathbb R^3 \setminus \{0\}$.

Conversely, if $(M,g)=(\mathbb B^3,\delta)$, $V(x) = x\cdot v$, then $\Sigma$ is a flat unit disk. We have: $\Lambda=0,~\gamma=0,~b=1,~H_1=2,~K_\Sigma=0,~|\partial_1\Sigma|=2\pi$ and the equality in~\eqref{eq:mineq} is achieved.
\end{proof}

\begin{remark}
The method of proof of Theorem~\ref{thm:ind} is borrowed from the proof of Proposition 17 in~\cite{cruz2024min}, \textit{the Area-Charge Inequality} (see also Theorem D in the same paper and the proof of Theorem 2.2 in~\cite{mendes2018rigidity}, where a similar result was obtained). In fact, Theorem~\ref{thm:ind} is an analog of Proposition 17 in the case, when $Q=0$: \textit{Let $(M,g,V)$ be a compact 3-dimensional static manifold with $R_g= 2\Lambda$. Suppose that $\Sigma=V^{-1}(0)$ is connected and has index one. Then} 
$$
\Lambda|\Sigma|_g\leqslant 12\pi+8\pi\left(\frac{\gamma}{2}-\left[\frac{\gamma}{2}\right]\right).
$$
\textit{Moreover, the equality holds if and only if $\gamma$ is an even integer. In particular, if $\Sigma$ is a two-sphere and $R_g=6$, then $(M,g)$ is isometric to the standard 3-sphere and $\Sigma$ is an equatorial 2-sphere.} The last part follows from Shen's theorem (see~\cite{shen1997note}). 

Finally, the following corollary immediately follows from the proof of Proposition 17 in~\cite{cruz2024min}
\begin{corollary} 
Let $(M,g,V)$ be a 3-dimensional compact static manifold with boundary with $R_g= 2\Lambda$. Suppose that $\Sigma=V^{-1}(0)$, does not intersect $\partial M$ is connected and has index one. Then 
$$
\Lambda|\Sigma|\leqslant 12\pi+8\pi\left(\frac{\gamma}{2}-\left[\frac{\gamma}{2}\right]\right).
$$
Moreover, the equality holds if and only if $\gamma$ is an even integer. 
\end{corollary}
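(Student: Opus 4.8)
\emph{Proof proposal.} The plan is to run the closed-surface version of the Yang--Yau/Hersch argument on the totally geodesic hypersurface $\Sigma$, following the proof of Theorem~\ref{thm:ind} but now with $\partial\Sigma=\emptyset$. Since $\Sigma=V^{-1}(0)\subset Int~M$, Lemma~\ref{lemma}$(a.1)$ shows that $\Sigma$ is a closed, embedded, two-sided, totally geodesic surface in $M^{3}$; as $M$ is orientable, $\Sigma$ is an orientable closed surface of genus $\gamma$. Substituting $B\equiv 0$, $R_{g}=2\Lambda$ and $\partial\Sigma=\emptyset$ into~\eqref{eq:2nd2}, the second variation of area along $\phi\nu$ becomes
$$
S(\phi,\phi)=\int_{\Sigma}\bigl(|\nabla^{g}\phi|_{g}^{2}-\Lambda\phi^{2}+K_{\Sigma}\phi^{2}\bigr)\,dv_{g}.
$$
The associated Jacobi operator $J=\Delta_{g}+(\Lambda-K_{\Sigma})$ is self-adjoint on $L^{2}(\Sigma,dv_{g})$, and the index-one hypothesis is precisely that $-J$ has exactly one negative eigenvalue (necessarily simple) with all others non-negative. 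Let $\phi_{1}>0$ be the corresponding ground state; then $S(\psi,\psi)\geqslant 0$ for every $\psi\in C^{\infty}(\Sigma)$ with $\int_{\Sigma}\psi\phi_{1}\,dv_{g}=0$.

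Next I would use the classical fact (see~\cite{yang1980eigenvalues}) that, with respect to the conformal class of $g|_{\Sigma}$, the closed surface $\Sigma$ admits a conformal branched covering $u=(u_{1},u_{2},u_{3})\colon\Sigma\to\mathbb S^{2}\subset\mathbb R^{3}$ with $\deg u\leqslant\bigl[\tfrac{\gamma+3}{2}\bigr]$. The Hersch balancing trick, applied to the finite positive measure $\phi_{1}\,dv_{g}$, lets me replace $u$ by $F\circ u$ for a suitable conformal automorphism $F$ of $\mathbb S^{2}$ (which changes neither the conformal class nor the degree) so that $\int_{\Sigma}u_{j}\phi_{1}\,dv_{g}=0$ for $j=1,2,3$; hence $S(u_{j},u_{j})\geqslant 0$ for each $j$. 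Adding the three inequalities and using $\sum_{j}u_{j}^{2}\equiv 1$ on $\Sigma$, the $\Lambda$- and $K_{\Sigma}$-terms decouple and yield
$$
\int_{\Sigma}|\nabla^{g}u|_{g}^{2}\,dv_{g}-\Lambda|\Sigma|+\int_{\Sigma}K_{\Sigma}\,dv_{g}\geqslant 0.
$$
Since $u$ is conformal and $\operatorname{Area}(\mathbb S^{2})=4\pi$, we have $\int_{\Sigma}|\nabla^{g}u|_{g}^{2}\,dv_{g}=8\pi\deg u$, and Gauss--Bonnet gives $\int_{\Sigma}K_{\Sigma}\,dv_{g}=2\pi\chi(\Sigma)=4\pi(1-\gamma)$. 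Therefore
$$
\Lambda|\Sigma|\leqslant 8\pi\deg u+4\pi(1-\gamma)\leqslant 8\pi\Bigl[\tfrac{\gamma+3}{2}\Bigr]+4\pi(1-\gamma),
$$
and the elementary identity $\bigl[\tfrac{\gamma+3}{2}\bigr]+\bigl[\tfrac{\gamma}{2}\bigr]=\gamma+1$ turns the right-hand side into $12\pi+8\pi\bigl(\tfrac{\gamma}{2}-\bigl[\tfrac{\gamma}{2}\bigr]\bigr)$, which is the claimed bound.

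For the equality statement, equality in the displayed chain forces simultaneously $\deg u=\bigl[\tfrac{\gamma+3}{2}\bigr]$ and $S(u_{j},u_{j})=0$ for all $j$; the second condition makes each $u_{j}$ a second eigenfunction of $J$, and combining the Jacobi equation $\Delta_{g}u_{j}+(\Lambda-K_{\Sigma})u_{j}=0$ with $\Delta_{g}u_{j}=-|\nabla^{g}u|_{g}^{2}u_{j}$ (valid because a branched conformal cover of $\mathbb S^{2}$ is harmonic) gives $|\nabla^{g}u|_{g}^{2}=\Lambda-K_{\Sigma}$ pointwise on $\Sigma$. Pushing this rigidity as in the proof of Proposition~17 of~\cite{cruz2024min} forces $\gamma$ to be even, and when $\gamma=0$ the map $u$ is a conformal diffeomorphism, so $\Sigma$ is a round two-sphere realizing $\Lambda|\Sigma|=12\pi$; in the normalization $R_{g}=6$ one then concludes from Shen's theorem~\cite{shen1997note} that $(M^{3},g)$ is isometric to the unit round $3$-sphere. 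The one genuinely delicate step is this equality analysis --- ruling out the coincidence of sharpness of the gonality bound and the vanishing of the index form on the $u_{j}$ unless $\gamma$ is even --- which I would carry out verbatim as in~\cite{cruz2024min}; everything else is the standard closed-surface Yang--Yau computation applied to the totally geodesic hypersurface $\Sigma$.
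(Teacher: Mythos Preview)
Your proposal is correct and is precisely the argument the paper has in mind: the paper does not give an independent proof but states that the corollary ``immediately follows from the proof of Proposition~17 in~\cite{cruz2024min}'', and you have faithfully reproduced that proof---the Yang--Yau/Hersch construction of a branched conformal cover $u\colon\Sigma\to\mathbb S^{2}$ of degree at most $\bigl[\tfrac{\gamma+3}{2}\bigr]$, balanced against the positive first Jacobi eigenfunction, followed by Gauss--Bonnet and the arithmetic identity. Your discussion of the equality case (including the digression on Shen's theorem, which belongs to the preceding remark rather than the corollary itself) likewise matches what~\cite{cruz2024min} does, so there is nothing to add.
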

\end{remark}

\begin{remark}
If there is a stable connected component of the zero-level set of the potential on a static manifold with boundary, then all the classical theorems about the stable minimal hypersurfaces in ambient Riemannian manifolds with a given bound on the curvature apply to it. If the zero-level set of the potential is closed and bounds a domain, which does not contain a boundary component, then this domain is a static triple. Then, for example, if the static manifold with boundary is 3-dimensional, has positive scalar curvature, and the zero-level set of the potential is unstable, then by~\cite[Theorem B]{ambrozio2017static} it is a topological sphere and the domain, which is bounded by it, is simply connected. Also, in the above setting (without assuming instability), if we know an estimate of the area of the zero-level set of the potential, which is supposed to be connected, then, for example, the following results also apply~\cite[Theorem D]{ambrozio2017static} and~\cite{shen1997note,boucher1984uniqueness}. Finally, notice that if a connected component $\Sigma$ of the zero-level set of the potential on a 3-dimensional static manifold with boundary is a stable FBMS, then it follows from $S(1,1)\geqslant 0$ that
$$
2\pi\chi(\Sigma) \geqslant \Lambda |\Sigma|+\sum_{i=1}^b\left(k_i+\frac12H_i\right)|\partial_i\Sigma|,
$$
where $k_i$ is the geodesic curvature of $\partial_i\Sigma$ in $\Sigma$. It is not difficult to compute $k_i=H_i/2$, which simplifies the previous inequality. In particular, if $\Lambda$ and $H_i$ for any $i$ are non-negative, then $\Sigma$ is either a topological disk or a cylinder. 
\end{remark}

We pass to the proof of Theorem~\ref{thm:scmc}. We need the following lemma.

\begin{lemma}\label{prop:lambda}
Let $(M^n,g,V)$ be a compact static manifold with boundary and $S$ a connected component of $\partial M$. Assume that $\Ric_g(\nu,\nu)+H_S^2/(n-1)=\lambda_1(S)$, the first Laplace eigenvalue of $S$. Then if $\Sigma=V^{-1}(0)$ intersects $S$, then $\Sigma$ intersects $S$ only once, i.e., the set $\Sigma\cap S$ is connected.
\end{lemma}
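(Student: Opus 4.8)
The plan is to show that $V$ restricts to a first Laplace eigenfunction of $S$, and then to read the conclusion off from its nodal geometry. First I would combine the boundary identity~\eqref{eq:comp} (equivalently~\eqref{eq1}) with the umbilicity of $\partial M$ from Lemma~\ref{lemma}(d) to get that $f:=V|_S$ satisfies $\Delta_S f=-\bigl(\Ric_g(\nu,\nu)+H_S^2/(n-1)\bigr)f$ on $S$, which by the hypothesis is exactly $\Delta_S f=-\lambda_1(S)\,f$. Then I would check that $f\not\equiv 0$: if $f\equiv 0$ then, since $\nu(V)=\tfrac{H_S}{n-1}V$ by~\eqref{static2}, both $V$ and $\nabla_g V$ vanish along $S$, and the static equation $\Hess_g V=V\bigl(\Ric_g-\tfrac{R_g}{n-1}g\bigr)$ then forces $V\equiv 0$ (the locus where $V$ and $\nabla_g V$ both vanish is open and closed in $M$), contradicting non-triviality of $V$. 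Since $S$ is a closed manifold, $\lambda_1(S)>0$, so $\int_S f\,ds=0$; hence $f$ changes sign and is a genuine first eigenfunction of $(S,g|_S)$.

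Next I would apply Courant's nodal domain theorem to conclude that $f$ has at most two, and therefore exactly two, nodal domains $S^+$ and $S^-$. I would also record that $0$ is a regular value of $f$: along $\Sigma\cap S$ we have $V=0$, hence $\nu(V)=\tfrac{H_S}{n-1}V=0$, so that $\nabla_g V$ is tangent to $S$ there and $|\nabla_S f|=|\nabla_g V|=\kappa>0$ by Lemma~\ref{lemma}(b). Thus $\Sigma\cap S$ is a smooth closed embedded hypersurface of $S$ — consistently with Lemma~\ref{lemma}(a.2) — and $S\setminus(\Sigma\cap S)=S^+\sqcup S^-$ with $\overline{S^+}\cap\overline{S^-}=\Sigma\cap S$.

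The delicate step is to upgrade ``exactly two nodal domains'' to ``$\Sigma\cap S$ is connected'', since the nodal set of a first eigenfunction can be disconnected in general. Here I would use the rigidity of Lemma~\ref{lemma}(a.2): every connected component of $\Sigma\cap S$ is \emph{totally geodesic}, hence minimal, in $S$. If $\Sigma\cap S$ had two distinct components, they would be disjoint closed minimal hypersurfaces of the closed manifold $S$ (and, one checks, each would then be non-separating), which is impossible, by the Frankel property (see~\cite[Section 2]{frankel1966fundamental}), as soon as $S$ carries positive Ricci curvature; since $\Sigma$ meets $S$ by hypothesis, only one component can survive, and $\Sigma\cap S$ is connected. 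Establishing $\Ric_{g|_S}>0$ is the main obstacle: feeding $\Ric_g(\nu,\nu)=\lambda_1(S)-H_S^2/(n-1)$ and the umbilicity $B_{\partial M}=\tfrac{H_S}{n-1}\,g|_S$ through the Gauss equations of $S\subset M$ shows in particular that $S$ has constant scalar curvature, but to obtain positivity I would fall back on the ambient hypotheses present in the applications — for instance $\Ric_g=0$, or $S$ Ricci-positive, as in the Remark following Theorem~\ref{thm:scmc} — under which the Frankel dichotomy immediately closes the argument.
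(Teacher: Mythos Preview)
Your argument through the Courant nodal domain step coincides with the paper's proof: the paper uses \eqref{eq:comp} to obtain $\Delta_S V=-\lambda_1(S)\,V$, invokes Courant to conclude that $V|_S$ has exactly two nodal domains, and then writes ``In particular, the set $\Sigma\cap S$ is connected'' with no further justification. You are right to flag this last inference as non-automatic: a first eigenfunction on a closed manifold can have exactly two nodal domains while its (smooth, regular) nodal set is disconnected --- e.g.\ $\sin(2\pi x)$ on a suitably proportioned flat $2$-torus, whose nodal set is two disjoint circles. The paper supplies no argument for this step.

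Your Frankel-based repair is the natural idea --- and is exactly the mechanism the paper itself invokes separately in the Remark following Theorem~\ref{thm:scmc} --- but, as you yourself acknowledge, it requires $\Ric_{g|_S}>0$, which does not follow from the hypotheses of the lemma: feeding $\Ric_g(\nu,\nu)=\lambda_1(S)-H_S^2/(n-1)$ and umbilicity through the Gauss equations constrains $R_S$, not the pointwise Ricci of $S$. So your proposal does not close the argument in the stated generality; rather, it isolates a step that the paper's own proof leaves unjustified.
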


\begin{proof}
By~\eqref{eq:comp}, we have
\begin{equation*}
\Delta_SV=-\left(\Ric_g(\nu,\nu)+\dfrac{H_S^2}{n-1}\right)V,
\end{equation*}
which implies that $V$ is the first Laplace eigenfunction. As a corollary of the Courant nodal domain theorem (see Subsection~\ref{sub:spec}), $V$ has only 2 nodal domains. In particular, the set $\Sigma\cap \partial M$ is connected. 
\end{proof}

As a corollary, we obtain Theorem~\ref{thm:scmc}. Here is the proof

\begin{proof}[Proof of Theorem~\ref{thm:scmc}]
Theorem~\ref{cor:closed2} implies that if $\Sigma=V^{-1}(0)$ is not empty, then it intersects $\partial M$. Further, it follows from the contracted Gauss equation that
$$
-\left(\Ric_g(\nu,\nu)+\dfrac{H_g^2}{n-1}\right)=\frac12\left(R_{\partial M}- R_g-\displaystyle\frac{n}{n-1}H^2_g\right)=const,
$$ 
since $R_{\partial M}=const$ by hypothesis of the theorem. Since $\partial M$ is umbilical, $|B|^2_g=H^2_g/(n-1)$. Hence, $\Ric_g(\nu,\nu)+|B|^2_g=const$ on $\partial M$. By~\cite[Proposition 2.13]{barbosa2012stability} $\partial M$ such that $\Ric_g(\nu,\nu)+|B|^2_g=const$ is a \textit{bounding} (i.e., it is a boundary of some ambient Riemannian manifold) stable cmc-hypersurface if, and only if, $\Ric_g(\nu,\nu)+|B|^2_g=\lambda_1(\partial M)$. Then it follows from Lemma~\ref{prop:lambda} that $\Sigma$ intersects $\partial M$ only once. Thus, $\Sigma$ is connected (otherwise, there is another connected component of $\Sigma$ that also intersects the boundary).

Consider the case, when $\Ric_g=0$. Then $R_g=0$. After a harmless homothety, we can assume that $H_g=n-1$. One has
$$
2\lambda_1(\partial M)=-R_{\partial M}+R_g+\frac{n}{n-1}H_g^2=n(n-1)-R_{\partial M}.
$$
The contracted Gauss equation implies $R_{\partial M}=(n-1)(n-2)$. Then $\lambda_1(\partial M)=n-1$. Hence, if we assume that $1$ is the first non-zero Steklov eigenvalue, then $(M,g)$ is isometric to $(\mathbb B^n,\delta)$ by the equality case in the Wang-Xia inequality~\eqref{eq:xw}. Thus, $V$ is a first non-zero Steklov eigenfunction on $(\mathbb B^n,\delta)$. It is well-know that then $V$ is of the form $v\cdot x$, where $v\in \mathbb R^n\setminus\{0\}$.

In order to prove inequality~\eqref{eq:isop} when $n=3$, we remark, that we have already shown that 
$$
\frac12\left(R_{\partial M}- R_g-\displaystyle\frac{3}{2}H^2_g\right)=-\lambda_1(\partial M).
$$
Then~\eqref{eq:isop} follows from the Yang-Yau inequality~\eqref{ineq:YY}. Finally, by \cite[Theorem 1.1]{karpukhin2019yang}, the equality is achieved if, and only if, $\partial M$ is  (up to homotheties) either the standard unit sphere or a Bolza surface, for which $\Lambda_1(\partial M)$ is achieved (see~\cite{nayatani2019metrics}).
\end{proof}

In the previous two theorems the quantity
$$
\Ric_g(\nu,\nu)+H^2_S/(n-1)=\frac12\left(-R_{S}+R_g+n/(n-1)H^2_S\right)
$$
 was a positive constant and $\Sigma$ intersected the boundary. The following proposition essentially states that \textit{if $\Ric_g(\nu,\nu)+H_S^2/(n-1)\leqslant 0$ along a boundary component $S$ of a compact $n$-dimensional static manifold with boundary, then $\Sigma=V^{-1}(0)$ does not intersect $S$. In particular, if $\Ric_g(\nu,\nu)+H_g^2/(n-1)\leqslant 0$, or, which is the same, $R_{\partial M}\geqslant R_g+n/(n-1)H^2_g$ on $\partial M$, then $V^{-1}(0)$ has no free boundary components.}

\begin{proposition}
Let $(M^n,g,V)$ be a compact static manifold with boundary. Suppose that $\Ric(\nu,\nu)+H^2_g/(n-1)$ does not change its sign along a boundary component $S$ and $\Sigma=V^{-1}(0)$ intersects $S$. Then $\Ric_g(\nu,\nu)+H^2_S/(n-1)>0$.
\end{proposition}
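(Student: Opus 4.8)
The plan is to argue entirely on the boundary component $S$. The only structural equation linking the quantity in question to $V$ is the restriction identity $\Delta_S V=-\bigl(\Ric_g(\nu,\nu)+H_S^2/(n-1)\bigr)V$ on $S$ (see~\eqref{eq:comp} and~\eqref{eq1}); note that, in contrast with Theorem~\ref{thm:bd}, no sign hypothesis on the cosmological constant is available here, so the proof cannot invoke any sub- or superharmonicity of $V$ in $Int~M$. Set $q:=\Ric_g(\nu,\nu)+H_S^2/(n-1)$, a smooth function on the closed, connected manifold $S$. Arguing by contradiction, assume $q$ is not positive along $S$; since $q$ does not change sign on $S$, this means $q\leqslant 0$ everywhere on $S$.

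First I would locate the sign changes of $V|_S$. Because $\Sigma$ intersects $S$, there is a connected component $\Sigma_0$ of $\Sigma$ with $\Sigma_0\cap S\neq\varnothing$; by Lemma~\ref{lemma}$(a.2)$ this $\Sigma_0$ is an embedded free boundary totally geodesic hypersurface, so $\Sigma_0\cap\partial M=\partial\Sigma_0$ and $\Gamma:=\partial\Sigma_0\cap S$ is a nonempty closed hypersurface of $S$ on which $V$ vanishes. Since $V\equiv 0$ on $\Sigma_0$, the gradient $\nabla^g V$ is everywhere normal to $\Sigma_0$, and by Lemma~\ref{lemma}$(b)$ it is nowhere zero on $\Sigma_0$. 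At a point $p\in\Gamma$ the free boundary condition $\Sigma_0\perp\partial M$ forces the unit normal of $\Sigma_0$ at $p$ to lie in $T_pS$; hence $\nabla^g V(p)\in T_pS$, and therefore $\nabla^S(V|_S)(p)=\nabla^g V(p)\neq 0$. Thus $\Gamma$ is a regular level hypersurface of $V|_S$, so $V|_S$ changes sign across $\Gamma$. In particular $\{V>0\}\cap S$ is a nonempty open subset of $S$, and it is proper because $V$ vanishes on $\Gamma$.

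Now I would close the argument by the maximum principle, exactly as in the proof of Theorem~\ref{thm:bd}. Replacing $V$ by $-V$ if necessary (again a static potential), choose a connected component $\Omega$ of $\{V>0\}\cap S$. Since $S$ is connected and $\Omega$ is a proper nonempty open subset, $\partial\Omega\neq\varnothing$, and $V=0$ on $\partial\Omega$ by continuity. On $\Omega$ we have $\Delta_S V=-qV\geqslant 0$ (as $q\leqslant 0$ and $V>0$ there), so $V$ is subharmonic on $\Omega$; the weak maximum principle then gives $\sup_{\overline{\Omega}}V=\max_{\partial\Omega}V=0$, contradicting $V>0$ on $\Omega$. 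Hence $q\leqslant 0$ on $S$ is impossible, so $q$ is positive along $S$ (nonnegative everywhere and strictly positive somewhere, since it does not change sign), which is the asserted inequality $\Ric_g(\nu,\nu)+H_S^2/(n-1)>0$.

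The only point requiring genuine care is the second step: one must check that $V|_S$ truly changes sign rather than merely vanishing on $\Gamma$, and this is precisely where the free boundary orthogonality of $\Sigma_0$ together with Lemma~\ref{lemma}$(b)$ (that $|\nabla^g V|$ is a positive constant on each component of $\Sigma$) enters. Everything else is a direct application of the boundary identity~\eqref{eq:comp} and the weak maximum principle.
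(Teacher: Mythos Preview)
Your proof is correct and follows the same approach as the paper: both arguments restrict to the boundary component $S$, use the identity $\Delta_S V=-\bigl(\Ric_g(\nu,\nu)+H_S^2/(n-1)\bigr)V$, and apply the weak maximum principle on a nodal domain of $V|_S$ to reach a contradiction. The paper splits the contradiction into the two subcases $q<0$ and $q\equiv 0$, whereas you treat $q\leqslant 0$ in one stroke and add an extra paragraph (via Lemma~\ref{lemma}$(a.2)$,$(b)$ and the free boundary orthogonality) to justify that $V|_S$ genuinely changes sign; this additional care is sound but not strictly needed, since the maximum principle argument already yields the contradiction once one knows $V|_S$ vanishes on $\Gamma$ and is nonzero somewhere on $S$.
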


\begin{proof}
The proof is also based on the formula
\begin{equation*}
\Delta_SV=-\left(\Ric_g(\nu,\nu)+\dfrac{H_S^2}{n-1}\right)V.
\end{equation*}
Suppose that $\Ric_g(\nu,\nu)+H_S^2/(n-1) <0$. Let $\Omega$ be a connected component of $S\setminus \partial\Sigma$. Then $V$ preserves its sign in $\Omega$. Let $V>0$ on $\Omega$. Then $\Delta_SV >0$, i.e., $V$ is superharmonic on $S$. By the weak maximum principle, $V$ achieves its maximum at $\partial\Omega$. But $V=0$ on $\partial\Omega$. Then $V\equiv 0$, which is impossible, since $V$ can only vanish on $\Sigma \cap S$. The case where $V<0$ is similar. 

Suppose that $\Ric_g(\nu,\nu)+H_S^2/(n-1)=0$. Then $V$ is harmonic on $S$ and since $S$ is compact, $V=const$. This also contradicts the fact that $V$ can only vanish on $\Sigma \cap S$.
\end{proof}

\section{Non-compact static manifolds with boundary}\label{sec:sch}

Previously, we observed that the exterior region of the photon sphere in the Schwarz-schild manifold is a static manifold with boundary. In this section, we explain this phenomenon. We start with the following definition (see, for example, \cite[Section 2]{raulot2021spinorial}).    

\begin{definition}
We say that a triple $(M^n,g,V), \, n\geqslant 3$ is an \emph{asymptotically Schwarz-schildean system of mass $m$}, if 

\begin{itemize}
\item $(M,g)$ is an \emph{asymptotically Schwarzschildean manifold of mass $m$}, i.e., there exists a compact set $U$ in $M$, such that $M\setminus U=\sqcup_{k=1}^KE_k$, where $E_k$ (called \textit{ends}) is diffeomorphic to $\mathbb R^n\setminus \overline{\mathbb B}$, and there exists $m\in \mathbb R$ such that  
$$
(\Phi_{k_*}g)_{ij}-(g_m)_{ij}=O_2(r^{1-n}),~\text{as } r\to\infty,
$$
where $\Phi_k$ is a diffeomorphism between $E_k$ and $\mathbb R^n\setminus\mathbb B$ and $g_m=\left(1+\dfrac{m}{2r^{n-1}}\right)^{\frac{4}{n-2}}\delta$;\\
\item $V$ is an \emph{asymptotically Schwarzschildean potential of the same mass $m$}, i.e., $V-V_m=O_2(r^{1-n})$, where 
$$
V_m(r)=\frac{1-\dfrac{m}{2r^{n-2}}}{1+\dfrac{m}{2r^{n-2}}}.
$$
\end{itemize}
\end{definition}

\begin{remark}
In fact, $V_m$ is the standard Schwarzschildean potential in the \textit{isotropic coordinates}. For this reason, asymptotically Schwarzschildean system is often referred to as \textit{asymptotically isotropic system}.
\end{remark}

We proceed with the following definition (see \cite[Section 2]{raulot2021spinorial} and ~\cite[Definition 15]{jahns2019photon}).

\begin{definition}\label{quasiphoton}
 Let $(M^n, g, V)$ be an asymptotically Schwarzschildean system with compact boundary. Then a connected component $S$  of $\partial M$ is called a \emph{quasi-local photon surface} if it is totally umbilical and it satisfies the following conditions: $V=const>0$ on $S$, the mean curvature of $S$ $H_S$ is a negative constant, and there exists a constant $c_S>1$ such that the equations 
\begin{align}\label{def:eq}
R_{S}=\frac{n-2}{n-1}c_SH_S^2
\quad \text{and} \quad 
     \frac{\partial V}{\partial\nu}=\frac12\frac{n-2}{n-1}(c_S-1)H_SV \quad \text{on}~S
\end{align}
are satisfied.
\end{definition}

\begin{remark}
In \cite[Definition 15]{jahns2019photon} and \cite{raulot2021spinorial} quasi-local photon surfaces are defined such that $H_S$ is a positive constant. The reason is that we consider the outward pointing unit normal $\nu$, while in \cite[Definition 15]{jahns2019photon} and \cite{raulot2021spinorial} $\nu$ points toward the asymptotic end.
\end{remark}

The following lemma holds.

\begin{lemma}\label{thm:photon}
 Let $(M^n,g,V)$ be an asymptotically Schwarzschildean system, which is a static manifold with compact boundary. Let $S$ be a connected component of $\partial M$ such that $V=const> 0$ on $S$ and $H_S<0$. Then $S$ is a quasi-local photon surface.
\end{lemma}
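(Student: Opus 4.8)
The plan is to extract the defining relations of a quasi-local photon surface from the static equations restricted to $S$, together with the hypotheses $V=\mathrm{const}>0$ and $H_S<0$. The key input is the Robinson-type/contracted-Gauss computation already recorded in~\eqref{eq:comp}: along a boundary component $S$ one has $V\Ric_g(\nu,\nu)=-\Delta_SV-\tfrac{H_S^2}{n-1}V$ in dimension $3$, and more generally (as used repeatedly in Section~\ref{sec:fbms}) the identity
$$
\Delta_SV=-\left(\Ric_g(\nu,\nu)+\frac{H_S^2}{n-1}\right)V \quad\text{on }S.
$$
Since $V$ is constant on $S$, the left-hand side vanishes, so $\Ric_g(\nu,\nu)+\tfrac{H_S^2}{n-1}=0$ on $S$, hence by the contracted Gauss equation $R_S=R_g+\tfrac{n}{n-1}H_S^2$ on $S$.

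Next I would feed in that $(M,g,V)$ is a static manifold with boundary, so by Lemma~\ref{lemma}(d) the boundary $S$ is totally umbilical with constant mean curvature, and by~\eqref{static2} the Neumann data satisfy $\tfrac{\partial V}{\partial\nu}=\tfrac{H_g}{n-1}V=\tfrac{H_S}{n-1}V$ on $S$. Because $S$ is asymptotically Schwarzschildean with mass $m$, the scalar curvature $R_g$ is a constant (Lemma~\ref{lemma}(c)) equal to $0$ — this is forced by the decay $g_m-\delta=O_2(r^{1-n})$ and $V_m\to 1$ at the end, which makes $R_g$ and $\Lambda$ vanish on a complete asymptotically flat static system. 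With $R_g=0$ the relation above becomes $R_S=\tfrac{n}{n-1}H_S^2=\tfrac{n-2}{n-1}\cdot\tfrac{n}{n-2}H_S^2$, so setting $c_S:=\tfrac{n}{n-2}$ gives exactly $R_S=\tfrac{n-2}{n-1}c_SH_S^2$, the first equation in~\eqref{def:eq}, and note $c_S=\tfrac{n}{n-2}>1$ for all $n\geqslant 3$ as required.

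It remains to verify the second equation of~\eqref{def:eq}. With this value of $c_S$ one computes
$$
\frac12\frac{n-2}{n-1}(c_S-1)H_S=\frac12\frac{n-2}{n-1}\cdot\frac{2}{n-2}\,H_S=\frac{H_S}{n-1},
$$
which matches the Neumann condition $\tfrac{\partial V}{\partial\nu}=\tfrac{H_S}{n-1}V$ from~\eqref{static2}. Finally the sign conditions $V=\mathrm{const}>0$ and $H_S<0$ are exactly the hypotheses, and total umbilicity of $S$ is Lemma~\ref{lemma}(d). Collecting these, $S$ satisfies every clause of Definition~\ref{quasiphoton}, so it is a quasi-local photon surface.

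\textbf{Main obstacle.} The only genuinely non-formal point is the assertion that $R_g=0$ (equivalently $\Lambda=0$) on an asymptotically Schwarzschildean static system; everything else is bookkeeping with Lemma~\ref{lemma} and equations~\eqref{static1}--\eqref{static2}. I expect this to follow from the fact that $R_g$ is a global constant (Lemma~\ref{lemma}(c)) while the prescribed asymptotics $(\Phi_{k*}g)_{ij}-(g_m)_{ij}=O_2(r^{1-n})$ force the scalar curvature to decay at the end, so the constant must be zero; one should double-check that the $O_2$ decay is strong enough to kill the curvature terms in the chart at infinity, but this is standard for asymptotically flat metrics.
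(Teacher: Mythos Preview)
Your proposal is correct and follows essentially the same route as the paper: derive $\Ric_g(\nu,\nu)=-\tfrac{H_S^2}{n-1}$ from $\Delta_SV=0$, feed this into the contracted Gauss equation to get $R_S=\tfrac{n}{n-1}H_S^2$ (using $R_g=0$), and read off $c_S=\tfrac{n}{n-2}$ so that both equations in~\eqref{def:eq} match the boundary condition~\eqref{static2}. You are in fact more explicit than the paper about the step $R_g=0$, which the paper's proof of the lemma uses silently (it is stated only later, in the proof of Theorem~\ref{thm:noncomp}); your justification via Lemma~\ref{lemma}(c) plus the asymptotic decay is exactly right.
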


\begin{proof}
Since $V\neq 0$ on $S$, it follows from the first equation of~\eqref{static2} that $S$ is umbilical. Hence, according to definition~\ref{quasiphoton}, we need only to verify that equations~\eqref{def:eq} are satisfied on $S$ for some constant $c_S>1$. In fact, we show that $c_S=\dfrac{n}{n-2}$, i.e., the equations
\begin{equation}\label{eq:photon1}
R_{S}=\frac{n}{n-1}H_{S}^2
\end{equation}
and 
\begin{equation}\label{eq:photon2}
\frac{\partial V}{\partial\nu}=\frac{H_{S}}{n-1}V
\end{equation}
are satisfied on $S$. But equation~\eqref{eq:photon2} holds by the definition of a static manifold with boundary (see the second equation of~\eqref{static2}). So, it remains to verify~\eqref{eq:photon1}.

The contracted Gauss equation and the umbilicity of $S$ imply
\begin{equation}\label{eq:gauss}
R_g-2\Ric_g(\nu,\nu)=R_{S}-\frac{n-2}{n-1}H_{S}^2.
\end{equation}
Further, consider the equation
$$
\Delta_{S}V=-\left(\Ric_g(\nu,\nu)+\dfrac{H_{S}^2}{n-1}\right)V.
$$
Since $V=const$ on $S$, $\Ric_g(\nu,\nu)=-\dfrac{H_{S}^2}{n-1}$. Substituting this into~\eqref{eq:gauss}, we get
$$
R_{S}=\frac{n-2}{n-1}H_{S}^2-2\Ric_g(\nu,\nu)=\frac{n}{n-1}H_{S}^2,
$$ 
which is exactly equation~\eqref{eq:photon1}.
\end{proof}

Now we are ready to prove Theorem~\ref{thm:noncomp}. 

\begin{proof}[Proof of Theorem~\ref{thm:noncomp}]
If $V\neq 0$ on $M\cup\partial M$, then since $V$ is an asymptotically Schwarzschildean potential, $V>0$ on $M$ and $V=const>0$ on $\partial M$. Moreover, since $(M,g)$ is asymptotically Schwarzschildean and $(M,g,V)$ is static, $R_g=const=0$ and $V$ is harmonic. Then the first statement follows from Lemma~\ref{thm:photon} and the Schwarzschild photon sphere uniqueness theorems~\cite[Theorem 4.2]{cederbaum2021photon},\cite[Theorem 3]{jahns2019photon}.

If $n=3$ and $\Sigma=V^{-1}(0)\subset Int~M$ is compact and separates the boundary and the end, then we cut $M$ along $\Sigma$. Consider the closure of the unbounded part of $M\setminus \Sigma$. As in the previous part, we conclude that $R_g=const=0$, $V$ is harmonic, $V>0$ on it. Moreover, the boundary is totally geodesic (since $\Sigma$ is the zero-level set of the potential) and $\nu(V)=const\neq 0$ on it. Then by~\cite[Corollary 4]{jahns2019photon} or \cite[Theorem 4.2]{cederbaum2021photon} (the static horizon case) the closure of the unbounded part of $M\setminus \Sigma$ is isometric to $Sch^3_{-}$. Particularly, the boundary (i.e., $\Sigma$) is connected and isometric to the round sphere $\{2m\}\times \mathbb S^{2}$. Then the boundary of the closure of the bounded part of $M\setminus \Sigma$, that we denote $\Omega$, has two connected components: One of them, which is the zero-level set $\Sigma$ of the potential, is the totally geodesic round sphere $\{2m\}\times \mathbb S^{2}$ and the other one is a mean convex round sphere, which we denote by $S$. Indeed, let $S$ have many connected components. Since $R_g=0$ and the mean curvature of $S$ is positive, then the argument with the associated singular Einstein manifold as in Theorem~\ref{thm:rigidity} $(ii)$ implies that $S$ is connected and it is a round sphere. Consider $\Omega$. The potential $V$ is negative on it. The rest of the proof follows the same arguments as in the end of the proof of Theorem 3 in~\cite{cruz2023static}. For the sake of completeness we repeat it here. Attach the space $Sch^3_-$ to it along $S$ (after a possible rescaling of $m$). Denote the resulting manifold by $M'$. Define a Riemannian metric and a static potential $\widetilde V$ on $M'$ as
$$
\widetilde g=\begin{cases} 
g~&\text{on}~\Omega,\\
g_m~&\text{on}~Sch^3_-
\end{cases} \quad~\text{and} \quad  \widetilde V(x)=\begin{cases} 
V(x),~&\text{if}~x\in\Omega,\\
-V_m(x),~&\text{if}~x\in Sch^3_-.
\end{cases}
$$ 
Here $g_m$ and $V_m$ are the metric and the static potential on $Sch^3_-$. It is not difficult to see that $\widetilde g$ and  $\widetilde{V}$ are smooth away from $S$ and $C^{1,1}$ across it. We can consider $(M', \widetilde g,\widetilde V)$ as a non-compact $C^{1,1}$-static manifold with boundary (see a justification that it is $C^{1,1}$ in~\cite[Section 7.2]{jahns2019photon}). It is scalar flat. Performing the doubling of it by reflection across $\Sigma$, we obtain an asymptotically Schwarzschildean $C^{1,1}$-manifold $(\widehat M,\widehat g, \widehat V)$. It has two ends. Further, we perform a conformal change of the metric $\widehat g$ as $\left(\dfrac{\widehat V+1}{2}\right)^{4}\widehat g$. A standard argument shows that the conformal factor of that metric vanishes nowhere. Namely, notice that it suffices to show that $0\leqslant -\widetilde V<1$ on $\Omega$. Indeed, since $V$ is harmonic on $\Omega$, by the maximum principle it attains its minimum on $\partial\Omega$. But $V=0$ on $\Sigma$ and $V<0$ on $\Omega$. Hence, $V$ attains its minimum on $S$. Thus, $V\geqslant \min_{S}V=-\max_{S} V_m>-1$, which implies that $0\leqslant -\widehat{V}<1$ on $M$ and $0\leqslant \widehat{V}<1$ on its reflected copy. Further, we compactify the non-reflected end of $(\widehat M,\left(\dfrac{\widehat V+1}{2}\right)^{4}\widehat g)$ by adding a point at infinity. We obtain a one-ended, geodesically complete, scalar flat manifold, which is $C^\infty$ away from the gluing surfaces and $C^{1,1}$ along them. Moreover, it has zero ADM-mass (see for example~\cite[Proposition 21]{jahns2019photon}). Then by the rigidity case of Bartnik's positive mass theorem~\cite[Theorem 6.3]{bartnik1986mass}, $(\widehat M,\left(\dfrac{\widehat V+1}{2}\right)^{4}\widehat g)$ is the Euclidean 3-space. Hence, $(M, \widetilde g)$ is $C^\infty$ and conformally flat, a part of which coincides with $Sch^3_-$. Then it follows from~\cite[Theorem 3.1]{kobayashi1982differential}, that this is the doubling of the Schwarzschild 3-manifold. Thus, $\Omega$ is the reflection of $([2m,3m]\times \mathbb S^2,g_m)$, with $m=2/(3\sqrt{3}H_S)$ across the boundary. Therefore, $(M,g)$ is $Sch^3_+$ and $V=V_m$.
\end{proof}

\section{Appendix}\label{appendix}

This section contains supplementary material, which is independent of the previous sections. It concerns conformally flat static manifolds with boundary. Kobayashi in~\cite[Theorem 3.1]{kobayashi1982differential} classified all connected complete conformally flat static manifolds. Based on this result, we obtain the following. 

\begin{theorem}\label{thm:koba}
Any locally conformally flat static manifold with boundary is a domain on one of the following manifolds (up to finite quotients):

(i) round sphere $\mathbb S^n$, Euclidean space $\mathbb E^n$, hyperbolic space $\mathbb H^n$;

(ii) the Riemannian product of the unit circle and the $(n-1)$-round sphere of curvature $k$, $\mathbb S^1\times \mathbb S^{n-1}(k),~k>0$;

(iii)  the Riemannian product $\mathbb R\times N(k), k\neq 0$;

(iv) a warped product $\mathbb R\times_u N(k)$.

Here $N(k)$ denotes an $(n-1)$-dimensional complete Riemannian manifold of constant curvature $k$, which satisfies
$$
k=\dot u^2+\frac{2m}{n-2}u^{2-n}+\frac{R_g}{n(n-1)}u^2,
$$ 
where $m$ is a constant. The warping function is a periodic solution to 
$$
\ddot u+\frac{R_g}{n(n-1)}u=mu^{1-n}.
$$
\end{theorem}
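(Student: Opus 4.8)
The plan is to reduce the statement to Kobayashi's classification of \emph{complete} conformally flat static manifolds in \cite[Theorem 3.1]{kobayashi1982differential} by passing to the interior and using the local structure theory that underlies it. First I would restrict $g$ and $V$ to $Int~M$, so that $(Int~M,g,V)$ becomes a connected locally conformally flat static manifold \emph{without} boundary; the boundary conditions in~\eqref{static} then play no further role, since $\partial M$ will reappear at the very end merely as part of the frontier of a domain inside a model space. The key regularity input is that a static metric is real-analytic in harmonic coordinates --- once $R_g=\mathrm{const}$ is fixed, \eqref{static1} is an elliptic system with analytic coefficients in that gauge --- so $(g,V)$ admits analytic continuation and any local structural identity propagates over all of $Int~M$ by connectedness.

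Next I would extract the local normal form behind \cite[Theorem 3.1]{kobayashi1982differential}: near each point a locally conformally flat static metric is either of constant sectional curvature, or isometric to an open piece of a warped product $I\times_u N(k)$ over a space form $N^{n-1}(k)$ of constant curvature $k$, with $u$ solving $\ddot u+\tfrac{R_g}{n(n-1)}u=mu^{1-n}$ for some constant $m$ and admitting the first integral $k=\dot u^2+\tfrac{2m}{n-2}u^{2-n}+\tfrac{R_g}{n(n-1)}u^2$; the Riemannian products in cases (ii) and (iii) are the degenerate subcase $u\equiv\mathrm{const}$. Analyticity together with connectedness forces a single one of these normal forms on all of $Int~M$, and in the warped case the distribution orthogonal to $\nabla u$ integrates to a globally defined foliation whose leaves are the totally umbilic constant-curvature (hence complete) factors $N(k)$ and whose warping function is a globally defined $u$.

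Finally I would complete the model and conclude. In the constant-curvature case the developing map realizes $Int~M$ as a domain in $\mathbb{S}^n$, $\mathbb{E}^n$ or $\mathbb{H}^n$ up to a finite holonomy ambiguity, which is what ``up to finite quotients'' records. In the warped case the first integral confines $u$ to oscillate between two consecutive positive roots of $\dot u^2=k-\tfrac{2m}{n-2}u^{2-n}-\tfrac{R_g}{n(n-1)}u^2$, or to be constant, or to run monotonically into a value reproducing a space form; in each subcase $u$ extends to a periodic (respectively constant, respectively complete) solution on all of $\mathbb{R}$, and together with the complete leaf $N(k)$ this yields one of the complete spaces $\mathbb{R}\times_u N(k)$, $\mathbb{R}\times N(k)$, $\mathbb{S}^1\times\mathbb{S}^{n-1}(k)$ into which $Int~M$ embeds as a domain. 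In every case $M$ is thus a domain on one of the spaces (i)--(iv); equivalently one applies \cite[Theorem 3.1]{kobayashi1982differential} directly to the completed static structure.

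I expect the globalization in the warped case to be the main obstacle: one must verify that the purely local warped-product splitting genuinely assembles into one global warped product --- that the leaves are mutually isometric complete space forms modelled on a single fixed $N(k)$, and that the warping function cannot force the manifold to pinch off before $u$ completes a period --- so that no inextendible analytic continuation of $(Int~M,g,V)$ escapes Kobayashi's list. This, rather than the routine phase-plane analysis of $u$ or the standard developing-map argument in the constant-curvature case, is where the care is needed.
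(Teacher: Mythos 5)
Your proposal takes essentially the same route as the paper: the paper gives no independent argument for this theorem, stating it as a direct consequence of Kobayashi's classification of connected complete conformally flat static manifolds in \cite[Theorem 3.1]{kobayashi1982differential}, which is precisely the reduction you carry out. The extra detail you supply (analyticity of static metrics, the local constant-curvature/warped-product dichotomy with its first integral, and the globalization into a complete model in which $M$ sits as a domain) is exactly the content of Kobayashi's analysis that the paper invokes wholesale, so your outline is consistent with the paper's treatment.
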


Very recently, Sheng in \cite{sheng2024static} classified domains (possibly non-compact) with one boundary component from the previous theorem. We complete his theorem by the case of domains on the anti-de Sitter-Schwarzschild, de Sitter-Schwarzschild and Nariai spaces (for notation see \textit{Examples 6--10} in Section~\ref{sec:examples}).

\begin{theorem}\label{thm:sheng}
Let $\Omega$ be a locally conformally flat static manifold with one boundary component. 

(i) If $\Omega\subset \mathbb S^n$, then it is a spherical cap. If $\Omega\subset \mathbb E^n$, then it is either a round ball or the exterior of a round ball, or a half-space. If $\Omega\subset \mathbb H^n$, where $\mathbb H^n$ is realized as a hyperboloid model, then it is one of the following domains: a connected component of $\mathbb H^n$ without a horosphere, a geodesic ball, the exterior of a geodesic ball, a connected component of $\mathbb H^n$ without the intersection of $\mathbb H^n$ with a hyperplane in $\mathbb R^{n+1}$.

(ii) If $\Omega\subset \widetilde{Sch^n}$, then it is either the space $Sch^n_+$, or the space $Sch^n_-$, or the space $1/2Sch^n$. 

(iii) If $\Omega\subset \widetilde{AdS-Sch^n}$, then it is either the space $AdS$-$Sch^n_+$, or the space $AdS$-$Sch^n_-$, or the space $1/2AdS$-$Sch^n$. 

(iv) If $\Omega\subset Nar(\mathbb S^{n-1})$, then it is either the space $Nar_{k+}(\mathbb S^{n-1})$, or the space $Nar_{-k}(\mathbb S^{n-1})$, or the space $1/2Nar(\mathbb S^{n-1})$.

(v) If $\Omega\subset \widetilde{dS-Sch^n}$, then it is the space $1/2dS$-$Sch^n$. 

\end{theorem}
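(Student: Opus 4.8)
The plan is to derive Theorem~\ref{thm:sheng} from Theorem~\ref{thm:koba} by classifying, in each model space, the hypersurfaces that can bound a static manifold with boundary. By Theorem~\ref{thm:koba}, $\Omega$ is, up to finite quotients, a domain in one of the listed manifolds; the warped and direct product cases with the stated normalizations are precisely the spatial Schwarzschild, anti-de Sitter-Schwarzschild, de Sitter-Schwarzschild and Nariai geometries. It therefore suffices to decide, ambient space by ambient space, which domains $\Omega$ have a single boundary component $S$ satisfying the boundary equations~\eqref{static2}.

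The first step is to unpack~\eqref{static2}. If $V$ vanished identically on $S$, then $S$ would be a connected component of $\Sigma=V^{-1}(0)$ and hence, by Lemma~\ref{lemma}(a.1) and (b), totally geodesic with $|\nabla V|_g=\kappa>0$ along it, so that $\nu(V)=\pm\kappa\neq 0$; but the second equation in~\eqref{static2} forces $\nu(V)=\tfrac{H_S}{n-1}V=0$, a contradiction. Thus $V\neq0$ on every boundary component, and then~\eqref{static2} says precisely that $S$ is totally umbilic with each principal curvature equal to $\nu(V)/V$. In particular a \emph{horizon} (a component of $\Sigma$ lying over a zero of the warping function) can never be a boundary component, so the ambient spaces to examine are the doublings $\widetilde{Sch^n}$, $\widetilde{AdS-Sch^n}$, $\widetilde{dS-Sch^n}$ and the Nariai space $Nar(\mathbb S^{n-1})$, inside which the horizons lie in the interior.

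For the space forms of (i) and for $\widetilde{Sch^n}$ in (ii) I would invoke Sheng's classification~\cite{sheng2024static}; the new content is (iii)--(v), which I would treat explicitly in the warped product coordinates. In the metric $V_m^{-2}dr^2+r^2g_0$ (for the Schwarzschild-type spaces) and $\tfrac1n dr^2+\tfrac{n-2}{n}g_0$ (for Nariai) the totally umbilic hypersurfaces split into two families: the slices $\{r\}\times N$, and --- when $N$ is a round sphere --- the totally geodesic ``equatorial'' hypersurfaces $\mathbb R\times\{\text{great }\mathbb S^{n-2}\}$, which in the isotropic coordinates of Examples~8--10 are exactly the hyperplanes through the origin. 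On a slice, $\nu(V)/V$ and the mean curvature are explicit functions of $r$, and the requirement ``principal curvature $=\nu(V)/V$'' reduces to equation~\eqref{eq:bound}, whose unique root is the photon radius $r_{ps}=(nm)^{1/(n-2)}$ (for Nariai the analogous reduction is $\cos r=0$, i.e.\ $r=\tfrac\pi2+k\pi$). On an equatorial hyperplane $H_S=0$ and, $V$ being radial while $\nu$ is tangent to the $N$-fibre, $\nu(V)=0$, so~\eqref{static2} holds automatically. Matching these hypersurfaces with the domains named in Examples~6--10: cutting $\widetilde{Sch^n}$, resp.\ $\widetilde{AdS-Sch^n}$, along the photon sphere yields $Sch^n_\pm$, resp.\ $AdS$-$Sch^n_\pm$; cutting $Nar(\mathbb S^{n-1})$ along a photon slice yields $Nar_{k+}$ and $Nar_{-k}$; and cutting any of the four models along an equatorial hyperplane yields $1/2Sch^n$, $1/2AdS$-$Sch^n$, $1/2dS$-$Sch^n$, $1/2Nar(\mathbb S^{n-1})$. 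In the de Sitter case one must count boundary components: $\widetilde{dS-Sch^n}$ has two horizons, so the photon slice and its reflected image are two disjoint spheres and the two regions they bound each have \emph{two} boundary components (hence are excluded from the present list), whereas the equatorial hyperplane and its image glue along the two doubled horizons into a single connected hypersurface, leaving $1/2dS$-$Sch^n$ with exactly one boundary component --- which is why (v) names $1/2dS$-$Sch^n$ alone.

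The main obstacle is the structural fact used twice above: that the only totally umbilic hypersurfaces of $\mathbb R\times_u N(k)$ and of $\mathbb R\times N(k)$ meeting the overdetermined condition ``principal curvature $\equiv \nu(V)/V$'' with $V=V(r)$ are the slices and the equatorial hyperplanes. I would prove it by writing a connected umbilic hypersurface locally as a graph $r=f(p)$ over an open set of $N$, so that umbilicity becomes an ODE in the fibre direction coupled with a compatibility constraint on $N$; this constraint forces $f$ to be constant on the totally geodesic hypersurfaces of $N(k)$, so that $S$ is invariant either under the full isotropy of the $N$-factor (a slice) or under a codimension-one subgroup (a product over a totally geodesic hypersurface of $N(k)$, i.e.\ a great $\mathbb S^{n-2}$ when $N=\mathbb S^{n-1}$), and the matching with $\nu(V)/V$ then pins down the slice radius. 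After that everything is routine: each candidate $\Omega$ is automatically a static manifold with boundary --- the interior equations~\eqref{static1} are inherited from the ambient model and~\eqref{static2} was just verified --- and passing to the finite quotients permitted by Theorem~\ref{thm:koba} changes only the topology of the $N$-fibre and produces no new type of boundary.
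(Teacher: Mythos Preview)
Your overall strategy---classify, in each ambient model, the hypersurfaces that can serve as $\partial\Omega$ under~\eqref{static2}---is exactly the paper's. Where you diverge is in the execution of the classification step, and there the paper's device is substantially simpler than yours.

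The paper does not attempt to classify umbilic hypersurfaces of the warped product directly. Instead it uses that each of $\widetilde{AdS\text{-}Sch^n}$, $Nar(\mathbb S^{n-1})$ and $dS\text{-}Sch^n$ is, in isotropic coordinates, conformal to a rotationally symmetric domain in $\mathbb E^n$. Since total umbilicity is a conformal invariant, the admissible boundaries must correspond to Euclidean spheres or hyperplanes. The CMC requirement (Lemma~\ref{lemma}(d)) then forces the spheres to be centered at the origin and the hyperplanes to pass through the origin, via an explicit computation with the conformal change of mean curvature. For the centered spheres the second equation in~\eqref{static2} reduces to~\eqref{eq:bound} and singles out the photon radius; for the hyperplanes both sides of~\eqref{static2} vanish. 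The de~Sitter count of boundary components is then handled exactly as you describe.

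Your ``main obstacle'' paragraph is where a genuine gap sits. The warped products in question have \emph{many} umbilic hypersurfaces beyond slices and equatorial planes---precisely because they are conformally Euclidean, every off-center Euclidean sphere is umbilic in the ambient metric---so your proposed dichotomy cannot come from umbilicity alone, and your graph/ODE sketch does not explain how the overdetermined condition $\lambda=\nu(V)/V$ eliminates those extra hypersurfaces. The claim that the compatibility constraint ``forces $f$ to be constant on the totally geodesic hypersurfaces of $N(k)$'' is not justified and is false for generic umbilic hypersurfaces. You could repair this by first deducing CMC from Lemma~\ref{lemma}(d) and then arguing in the conformal model, but at that point you have reproduced the paper's argument; the direct warped-product route, as written, does not close.
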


\begin{proof}
We only need to proof the third and fourth items. Firstly, consider item $(iii)$. The proof repeats the proof of~\cite[Proposition 4.23]{sheng2024static}. For the sake of completeness, we give it here. We already know from Lemma~\ref{lemma}, that the boundary of a static manifold with boundary is umbilical. The space $\widetilde{AdS-Sch^n}$ is conformal to $\mathbb E^n\setminus \{0\}$ (we consider the isotropic coordinates). Hence, the only possible boundaries are round hyperspheres and hyperplanes. Moreover, the boundaries must be cmc-hypersurfaces. Then the only possibility for hyperspheres is concentric round hyperspheres with the center at the origin. Then it is the photon sphere, as follows from the preliminary computation in Section~\ref{sec:examples}. Similarly, we show that the only possibility for a hyperplane to be cmc in $\widetilde{AdS-Sch^n}$ is to pass through the origin. Indeed, without loss of generality, we may assume that this plane is given by $\{x_n = c\}$. Then we apply the formula of the conformal change of mean curvature in order to find the mean curvature of this hyperplane:
$$
H =-e^{-\varphi}(n-1)\left\langle \nabla^\delta\varphi, \frac{\partial}{\partial x_n}\right\rangle_\delta,
$$
where $\delta$ is the Euclidean metric and $e^{-\varphi}$ is the conformal factor of the $\widetilde{AdS-Sch^n}$-metric. Since the function $\varphi$ is radial, we get
$$
H =-e^{-\varphi}(n-1)\frac{1}{r}\frac{\partial\varphi}{\partial r}x_n.
$$
Further, we see that $e^{-\varphi}\dfrac{1}{r}\dfrac{\partial\varphi}{\partial r}\neq const$ along the plane $\{x_n = c\}$. Hence, the only possibility to get a constant mean curvature is $x_n=0$. This plane is totally geodesic.

Finally, the outward unit normal to the plane $\{x_n=0\}$ is $\nu = e^{-\varphi}\dfrac{\partial}{\partial x_n}$. Then we see
$$
\frac{\partial V}{\partial \nu}=e^{-\varphi}\frac{1}{r}\frac{\partial V}{\partial r}x_n=0,
$$
i.e., equation~\eqref{static2} is satisfied. 

The proof of item $(iv)$ follows exactly the same arguments as the proof of item $(iii)$.

Pass to item $(v)$. Consider the space $dS$-$Sch^n$. It is conformal to a piece of $\mathbb E^n$, bounded by two hyperspheres, centered at the origin. Then, as in the previous case, the only possible boundaries for $\Omega$ are round hyperspheres centered at the origin or hyperplanes passing through the origin. In the case of a hyperplane, we obtain $1/2dS$-$Sch^n$ after the doubling. In the case of a hypersphere we obtain the photon sphere. However, after the doubling we obtain two hyperspheres. Then $\Omega$ is bounded either by two hyperspheres or it is $1/2dS$-$Sch^n$.

\end{proof}

\begin{remark}
It follows from the proof that the remaining cases for $\Omega$ in item $(v)$ are the spaces $dS$-$Sch^n_+$ and $dS$-$Sch^n_-$.
\end{remark}

\begin{remark}
One can obtain domains with many boundary components by taking intersections of domains listed in Theorem~\ref{thm:sheng}.
\end{remark}

\bibliography{mybib}
\bibliographystyle{alpha}

\end{document}